\newtheorem{thm}{Theorem}[section]
\newtheorem{cor}[thm]{Corollary}
\newtheorem{lem}[thm]{Lemma}
\newtheorem{rem}[thm]{Remark}
\theoremstyle{definition}
\newcommand{\scr}[1]{\mathscr #1}
\definecolor{wco}{rgb}{0.5,0.2,0.3}
\numberwithin{equation}{section}
\newcommand{\ua}{\uparrow}
\title{{\bf A study of backward stochastic differential equation on a Riemannian manifold}}
\author{
{\bf    Xin Chen$^{a)}$, Wenjie Ye$^{a)}$} 
\thanks{E-mail address: chenxin217@sjtu.edu.cn (X. Chen), yewenjie@sjtu.edu.cn(W.J. Ye)}
\\
\footnotesize {$^{a)}$ School of Mathematical Sciences, Shanghai Jiaotong University, Shanghai 200240, China}
}
\begin{document}

\allowdisplaybreaks

\def\R{\mathbb R} \def\Z{\mathbb Z} \def\ff{\frac} \def\ss{\sqrt}
\def\dd{\delta} \def\DD{\Delta} \def\vv{\varepsilon} \def\rr{\rho}
\def\<{\langle} \def\>{\rangle} \def\GG{\Gamma} \def\gg{\gamma}
\def\ll{\lambda} \def\LL{\Lambda} \def\nn{\nabla} \def\pp{\partial}
\def\d{\text{\rm{d}}} \def\loc{\text{\rm{loc}}} \def\bb{\beta} \def\aa{\alpha} \def\D{\mathbb D}
\def\E{\mathbb E}
\def\p{\mathbf P}
\def\a{\alpha}
\def\O{\Omega}
\newcommand{\Ex}{{\bf E}}
\def\si{\sigma} \def\ess{\text{\rm{ess}}}
\def\beg{\begin} \def\beq{\beg}  \def\F{\scr F}
\def\Ric{\text{\rm{Ric}}} \def\Hess{\text{\rm{Hess}}}\def\B{\scr B}
\def\e{\varepsilon} \def\ua{\underline a} \def\OO{\Omega} \def\b{\mathbf b}
\def\oo{\omega}     \def\tt{\tilde} \def\Ric{\text{\rm{Ric}}}
\def\cut{\text{\rm{cut}}}
\def\P{\mathbb P}
\def\ifn{I_n(f^{\bigotimes n})}
\def\fff{f(x_1)\dots f(x_n)} \def\ifm{I_m(g^{\bigotimes m})} \def\ee{\varepsilon}
\def\C{\text{curl}}
\def\B{\scr B}
\def\S{\scr S}
\def\M{\scr M}
\def\ll{\lambda}
\def\X{\scr X}
\def\T{\mathbb T}
\def\A{\scr A}
\def\LL{\scr L}
\def\gap{\mathbf{gap}}
\def\div{\text{\rm div}}
\def\dist{\text{\rm dist}}
\def\cut{\text{\rm cut}}
\def\supp{\text{\rm supp}}
\def\Var{\text{\rm Var}}
\def\p{\mathbf{p}}
\def\Cov{\text{\rm Cov}}
\def\Cut{\text{\rm Cut}}
\def\le{\leqslant}
\def\ge{\geqslant}
\def\I{\scr I}
\def\coth{\text{\rm coth}}
\def\Dom{\text{\rm Dom}}
\def\Cap{\text{\rm Cap}}
\def\Ent{\text{\rm Ent}}
\def\sect{\text{\rm sect}}\def\H{\mathbb H}
\def\g{\tilde {g}}
\def\o{\omega}
\def\u{\tilde{u}}
\def\c{\tilde{\theta}}\def\w{\tilde{\omega}}
\def\om{\tilde{\Omega}}\def\v{\varepsilon}
\def\U{\tilde{U}}
\def\b{\tilde \beta}
\def\S{\scr S}
\def\D{\mathbb D}
\def\F{\scr F}
\def\L{\Lambda}

\newtheorem{theorem}{Theorem}[section]
\newtheorem{lemma}[theorem]{Lemma}
\newtheorem{proposition}[theorem]{Proposition}
\newtheorem{corollary}[theorem]{Corollary}

\theoremstyle{definition}
\newtheorem{definition}[theorem]{Definition}
\newtheorem{example}[theorem]{Example}
\newtheorem{remark}[theorem]{Remark}
\newtheorem{assumption}[theorem]{Assumption}
\newtheorem*{ack}{Acknowledgement}
\renewcommand{\theequation}{\thesection.\arabic{equation}}
\numberwithin{equation}{section}

\maketitle

\rm

\vskip 10mm

\begin{abstract}
Suppose $N$ is a compact Riemannian manifold, in this paper we will introduce the definition
of $N$-valued BSDE and $L^2(\T^m;N)$-valued BSDE for which the solution are not necessarily staying
in only one local coordinate. Moreover, the global existence of a solution to $L^2(\T^m;N)$-valued BSDE will be proved
without any convexity condition on $N$.
\end{abstract}

\vskip 3mm

\section{Introduction}
Consider the following systems of backward stochastic differential equation (which will be written as BSDE for
simplicity through this paper) in $\R^n$,
\begin{equation}\label{e1-1}
Y_t=\xi-\int_t^T Z_s dB_s-\int_t^T f(s,Y_s,Z_s)ds,\ t\in [0,T].
\end{equation}
Here $\{B_s\}_{s\ge 0}$ is a standard $m$-dimensional Brownian motion defined on a probability space
$(\Omega,\mathscr{F},\P)$, $\xi$ is a $\mathscr{F}_T$-measurable $\R^n$-valued random variable, $\{Y_s\}_{s\in [0,T]}$, $\{Z_s\}_{s\in [0,T]}$ are
$\R^n$-valued predictable process and $\R^{mn}$-valued predictable process respectively. We usually call the function
$f:\Omega\times [0,T]\times \R^n\times \R^{m n}\to \R^n$ the generator of BSDE \eqref{e1-1}.

Bismut \cite{Bi} first introduced the linear version of BSDE \eqref{e1-1}. A breakthrough was
made by Pardoux and Peng \cite{PP1} where the existence of a unique solution to
\eqref{e1-1} was proved under global Lipschitz continuity of generator $f$. Still under the global Lipschitz continuity of $f$,
Pardoux and Peng \cite{PP2} has established an equivalent relation between the systems of
forward-backward stochastic differential equation (which will be written as FBSDE through this paper) and
the solution of a quasi-linear parabolic system. Another important observation by
\cite{PP1,PP2} was that BSDE \eqref{e1-1} could be viewed as a non-linear perturbation of martingale representation
theorem or Feynman-Kac formula.

It is natural to ask what is the variant for BSDE \eqref{e1-1} on a smooth manifold $N$.
When an $n$-dimensional manifold $N$ was endowed with only one local coordinate, Darling \cite{Da1} introduced a kind of
$N$-valued BSDE as follows
\begin{equation}\label{e1-2}
Y_t^k=\xi^k-\sum_{l=1}^m\int_t^T Z_s^{k,l} dB_s^l+\frac{1}{2}
\sum_{l=1}^m\int_t^T \sum_{i,j=1}^n \Gamma_{ij}^k (Y_s)Z_s^{i,l}Z_s^{j,l} ds.
\end{equation}
Here $Y_t=(Y_t^1,\cdots,Y_t^n)$ denotes the components of $Y_t$ under (the only one) local coordinate, and
$\{\Gamma_{ij}^k\}_{i,j,k=1}^n$ are the Christoffel symbols for a fixed affine connection $\Gamma$ on $N$.
The most important motivation to define \eqref{e1-2} is to construct a $\Gamma$-martingale with fixed terminal value
(we refer readers to \cite{E} or \cite{H} for the definition of $\Gamma$-martingale).
 In fact, with
the special choice of generator in \eqref{e1-2}
(which depends on the connection $\Gamma$), the solution $\{Y_t\}_{t\in [0,T]}$ of \eqref{e1-2} is a $\Gamma$-martingale
on $N$ with terminal value $\xi$. Moreover, Blache \cite{B1,B2} investigated a more general $N$-valued BSDE as follows when
$\{Y_t\}$ was restricted in only one local coordinate of $N$,
\begin{equation}\label{e1-3}
Y_t^k=\xi^k-\sum_{l=1}^m\int_t^T Z_s^{k,l} dB_s^l+\frac{1}{2}
\sum_{l=1}^m\int_t^T \sum_{i,j=1}^n \Gamma_{ij}^k (Y_s)Z_s^{i,l}Z_s^{j,l}ds+
\int_t^T f^k(Y_s,Z_s)ds,
\end{equation}
where $f:\Omega \times N\times T^m N \to TN$ is uniformly Lipschitz continuous. Moreover, the Lie group valued BSDE 
has been studied by Estrade and Pontier \cite{EP}, Chen and Cruzeiro \cite{CC}.

On the other hand, the $N$-valued FBSDE is highly related to the heat flow of harmonic map with
target manifold $N$. Partly using some idea of $N$-valued FBSDE, Thalmaier \cite{T} studied several problems
concerning about the singularity for heat flow of harmonic map by probabilistic methods. We also refer readers to
\cite{B1,B2,Da2,GPT,Ke1,Ke2,P,TW} for various methods and applications for the subjects
on $\Gamma$-martingale theory and its connection to the study of heat flow of harmonic map.

For the problems on $N$-valued BSDE mentioned above, there are  two mainly difficulties.
One is the  quadratic growth (for the variable associated with $Z$) term in the generator of \eqref{e1-2} and \eqref{e1-3}, for which the arguments in \cite{PP1,PP2} may not be applied directly to prove global (in time) or local
existence of a solution to \eqref{e1-2} and \eqref{e1-3}.
Kobylanski first proved the global  existence of a unique solution to the scalar valued (i.e. $n=1$)
BSDE \eqref{e1-1} with generator having quadratic growth  and bounded terminal value.
Briand and Hu \cite{BH1,BH2} extended these results to the case where the terminal value may be unbounded.
The problem for multi-dimensional BSDE is more complicated,
Darling \cite{Da1} introduced a kind of condition on the existence of
some doubly convex function, under which the global existence of
a unique solution to \eqref{e1-2} or \eqref{e1-3} has been obtained in \cite{Da1,B1,B2}.
Xing and Zitkovi\'c \cite{XZ} proved global existence of a unique Markovian solution of
\eqref{e1-1} based on the existence of a single convex function.
We also
refer readers to \cite{HT,HR,KLT,Te} and reference therein for various results concerning about the local existence of
a solution to \eqref{e1-1} in $\R^n$ with generator having quadratic growth  under different conditions,
including the boundness for Malliavin derivatives of terminal value(see Kupper, Luo and Tangpi \cite{KLT}),
small $L^\infty$ norm of terminal value
(see Harter and Richou \cite{HR} or Tevzadze \cite{Te}) and the special
diagonal structure of generators(see Hu and Tang \cite{HT}).

Another difficulty for $N$-valued BSDE is the lack of a linear structure for a general manifold $N$. In fact, the expression
\eqref{e1-2} and \eqref{e1-3} only make sense in a local coordinate which is diffeomorphic to an open set of $\R^n$. If
we want to extend \eqref{e1-2} and \eqref{e1-3} to the whole manifold $N$, the multiplication or additive operators
(therefore the It\^o integral term) may not be well defined because of the lack of a linear structure on $N$.
Due to this reason, \cite{Da1,B1,B2}  gave the definition of an $N$-valued BSDE which was restricted in only one
local coordinate. Meanwhile in \cite{CC} and \cite{EP}, the left (or right) translation on a Lie group has been applied 
to provide a linear structure for associated BSDE. 

By our knowledge, for a general $N$, how to define an
$N$-valued BSDE which are not necessarily staying in only one local coordinate is still unknown.
In this paper, we will solve this problem for the case that $N$ is a compact Riemannian manifold, see Definition \ref{d2-1} and \ref{d2-2} in Section \ref{section2}
below. Moreover, as explained above, the existence of a doubly convex or a single convex function is required to
prove the global existence of BSDE whose generator has quadratic growth. The existence
of these convex functions could be verified locally in $N$ (in fact, at every small enough neighborhood), see e.g.
\cite{B1,B2,Ke1}. But except for some special examples (such as Cartan-Hadamard manifold),
it is usually difficult to check whether such a convex function exists globally or not
in $N$. In this paper,  we will also prove the global existence
of an $N$-valued solution to some special BSDE without any convexity conditions mentioned above, see
Theorem \ref{t2-3} and \ref{t2-1} in Section \ref{section2.1}.

 We also give some remarks on our results as follows
\begin{itemize}
\item [(1)] Given a Riemannian metric on $N$, in Definition \ref{d2-1} and \ref{d2-2} we view $N$ as a sub-manifold of ambient
space $\R^L$, so the linear structure on $\R^L$ could be applied in BSDE \eqref{d2-1-1} and \eqref{d2-2-2}.
The key ingredient in \eqref{d2-1-1} and \eqref{d2-2-2} is that the term with quadratic growth is related to second
fundamental form $A$. As illustrated in the proof of Theorem \ref{t2-2}, it will ensure solution of
$\R^L$-valued BSDE \eqref{d2-1-1} to stay in $N$.  The advantage of our definition is that it does not
require the solution to be restricted in only one local coordinate as in \cite{Da1,B1,B2}, therefore
we do not need any extra condition on the generator $f$ in \eqref{d2-1-1} (see e.g. condition $(H)$ in \cite{B1,B2}).
Moreover, as explained in Remark \ref{r2-2}, our definition will be the same as that in \cite{B1,B2} when
we assume that the solution of \eqref{d2-1-1} is situated in only one local coordinate.

\item [(2)] The equation \eqref{d2-2-2} could be viewed as an $N$-valued FBSDE with forward equation
being $x+B_t$ in $\T^m$. In Definition \ref{d2-2}, we study the FBSDE with a.e. initial point $x\in \T^m$.
This kind of solution has been introduced in \cite{BM,MX,ZZ2,ZZ3} to investigate the connection between
FBSDE and weak solution of a quasi-linear parabolic system. The motivation of Definition
\ref{d2-2} is to study the global existence of a solution to $N$-valued BSDE for more general $N$,
especially for that without any convexity condition. Theorem
\ref{t2-1} ensures us to find a global solution of \eqref{d2-2-2} for any compact Riemannian manifold $N$. By the proof
we know the result still holds for non-compact Riemannian manifold with suitable bounded geometry conditions.
These results will also be applied to construct $\nabla$-martingale with fixed terminal value in
Corollary \ref{c2-2}.

\item [(3)] Theorem \ref{t2-2} provides a systematic way to obtain the existence of a solution
to $N$-valued BSDE, based on which we can apply many results on the $\R^L$-valued BSDE whose generator has quadratic growth
directly. By Theorem \ref{t2-3},
for any compact Riemannian manifold $N$, there exists
a unique global Markovian solution to \eqref{d2-1-1} when the dimension  $m$ of filtering noise is equal to $1$,
which gives us another example about global existence of a solution to
$N$-valued BSDE without any convexity condition. Meanwhile, it also illustrates that for
some BSDE whose generator has quadratic growth, not only the dimension $n$ of solution
(see the difference between scalar valued BSDE and multi-dimensional BSDE), but also the dimension $m$ of filtering noise, will have
crucial effects.

\end{itemize}

The rest of the paper is organized as follows. In Section \ref{section2} we will give a brief introduction
on some preliminary knowledge and notations, including the theory of sub-manifold $N$ in ambient space
$\R^L$. In Section \ref{section2.1}, we are going to summarise our main results and their applications.
 In Section \ref{section3},
the proof of Theorem \ref{t2-2} and \ref{t2-3} will be given. And we will prove Theorem \ref{t2-1} in Section \ref{section4}.

\section{Preliminary knowledge and notations}\label{section2}
\subsection{Sub-manifold of an ambient Euclidean space}\label{section2-1}
Through this paper, suppose that $N$ is an $n$-dimensional compact Riemannian manifold endowed with a Levi-Civita connection
$\nabla$. By the Nash embedding theorem, there exists an isometric embedding $i: N\rightarrow \R^L$
from $N$ to an ambient Euclidean space $\R^L$ with $L>n$. So we could view $N$ as
a compact sub-manifold of $\R^L$. We denote the Levi-Civita connection on $\R^L$ by $\bar \nabla$
(which is the standard differential on $\R^L$).
Let $TN$ be the tangent bundles of $N$ and let $T_p N$ be the tangent space at $p\in N$. For
any $m\in \mathbb{N}_+$, we define
\begin{equation*}
T^{m}N:=\bigcup_{p \in N}(T_p N)^{\otimes m}
\end{equation*}
as the tensor product of $TN$ with order $m$.

For every $p\in N\subset \R^L$, by the Riemannian metric on $N$, we could split $\R^L$ into direct sum as $\R^L=T_p N \oplus T_p^{\bot} N$, where
$T_p^{\bot} N$ denotes orthonormal complement of
$T_p N$. Hence for every $v\in \R^L$ and $p\in N$, we have  a decomposition as follows,
\begin{equation}\label{e2-1}
v=v^{T}+v^{\bot},\ \ v^{T}\in T_p N,\ v^{\bot}\in T_p^{\bot} N,
\end{equation}
we usually call $v^{T}$, $v^{\bot}$ the tangential projection and normal projection of $v\in \R^L$ respectively.

Given smooth vector fields $X,Y$ on $N$, let $\bar X$, $\bar Y$ be the (smooth) extension of $X$, $Y$
on $\R^L$ (which satisfies that $\bar X(p)=X(p)$, $\bar Y(p)=Y(p)$ for any $p\in N$), then we have
$\nabla_X Y(p)=(\bar \nabla_{\bar X}\bar Y)^{T}(p)$, where $(\bar \nabla_{\bar X}\bar Y)^T$ is the tangential projection
defined by \eqref{e2-1}.
Let $A(p):T_pN \times T_p N \to T_p^{\bot} N$ be the second fundamental form at $p\in N$ defined by
\begin{equation}\label{e2-2}
\begin{split}
A(p)(u,v):&=\bar \nabla_{\bar X}\bar Y(p)-\nabla_X Y(p)\\
&=\bar \nabla_{\bar X}\bar Y(p)-(\nabla_{\bar X} {\bar Y})^T(p), \ \ \forall\ 
u,v\in T_p N,
\end{split}
\end{equation}
where $X$, $Y$ are any smooth vector fields on $N$ satisfying $X(p)=u$, $Y(p)=v$,
$\bar X$, $\bar Y$ are any smooth vector fields on $\R^L$ which are extension of $X$ and $Y$ respectively.
The value of
$A(p)(u,v)$ is independent of the choice of extension $X$, $Y$, $\bar X$, $\bar Y$.

We define the distance from $p\in \R^L$ to $N$ as follows
\begin{equation*}
{\rm dist}_N(p):=\inf\{|p-q|;q\in N\subset \R^L\},
\end{equation*}
where $|p-q|$ denotes the Euclidean distance between $p$ and $q$ in $\R^L$. Set
\begin{equation*}
B(N,r):=\{p\in \R^L; {\rm dist}_N(p)<r\},\ \ \forall\ r>0.
\end{equation*}

Since $N$ is compact, it is well known that there exists a $\delta_0(N)>0$ such that
${\rm dist}_N^2(\cdot): B(N,3\delta_0) \to \R_+$ and the nearest projection
map $P_N:B(N, 3\delta_0)\to N$ are smooth, where for every $p\in B(N, 3\delta_0)$, $P_N(p)=q$ with
$q\in N$ being the unique element in $N$ satisfying $|p-q|={\rm dist}_N(p)$.
Moreover, for every $p\in B(N, 3\delta_0)$, suppose $\gamma:[0,{\rm dist}_N(p)]\to \R^L$ is the
unique unit speed geodesic in $\R^L$ (which is in fact a straight line) such that $\gamma(0)=P_N(p)$, $\gamma({\rm dist}_N(p))=p$, then
for every $p\in B(N, 3\delta_0)$ it holds
\begin{equation}\label{e2-2a}
\begin{split}
&\bar \nabla {\rm dist}_N (P_N(p))=\gamma'(0)\in T_{P_N(p)}^{\bot}N,\\
&\bar \nabla {\rm dist}_N(p)=\gamma'({\rm dist}_N(p))=\gamma'(0),\\
&|\bar \nabla {\rm dist}_N(p)|=1. 
\end{split}
\end{equation}
Here we have used property $\gamma'(0)=\gamma'({\rm dist}_N(p))$ since $\gamma(\cdot)$ is a straight line in $\R^L$.
Moreover, we still have the following characterization for second fundamental form $A$,
\begin{equation}\label{e2-3a}
A(p)(u,u)=\sum_{i,j=1}^L \frac{\partial^2 P_N}{\partial p_i \partial p_j}(p)u_iu_j,\
 p\in N,\ u=(u_1,\cdots,u_L)\in T_p N.
\end{equation}

We choose a cut-off function $\phi\in C^{\infty}(\R,\R)$ such that
\begin{equation*}
\phi(s)=
\begin{cases}
& 1,\ \ s<\delta_0,\\
&\in (0,1),\ \ s\in [\delta_0,2\delta_0],\\
&0,\ \ \ s>2\delta_0.
\end{cases}
\end{equation*}
It is easy to verify that $p\mapsto \phi({\rm dist}_N(p))$ is a smooth function on $\R^L$.
Then we could extend the second fundamental form $A$ defined by \eqref{e2-2} to
$\bar A:\R^L\to L(\R^L \times \R^L ; \R^L) $ (here $L(\R^L \times \R^L ; \R^L)$ denotes the collection of all
linear maps from $\R^L\times \R^L$ to $\R^L$) as follows
\begin{equation}\label{e2-3}
\begin{split}
\bar A(p)(u,u):=
\begin{cases}
&\phi\big({\rm dist}_N(p)\big)\sum_{i,j=1}^L \frac{\partial^2 P_N}{\partial p_i \partial p_j}(P_N(p))u_iu_j,\ \ \  p\in B(N,2\delta_0),\\
&0,\ \ \ p\in \R^L/B(N,2\delta_0)
\end{cases}
\end{split}
\end{equation}
for all $u\in \R^L$. According to
\eqref{e2-3a}, \eqref{e2-3} and the definition of $\phi$, we know immediately that $\bar A$ is a smooth map and
\begin{equation*}
\begin{split}
&\bar A(p)(u,v)=A(p)(u,v),\ \forall\ p\in N,\ u,v\in T_p N,\\
&\bar A(p)=0,\ \forall\ p\in \R^L/B(N,2\delta_0).
\end{split}
\end{equation*}
We refer readers to \cite[Section III.6]{C}, \cite[Chapetr 6]{D} or \cite[Section 1.3]{LW} for detailed introduction
 concerning about various properties for sub-manifold $N$ of $\R^L$.

\subsection{Non-linear generator $f$}
\emph{In this paper, we always make the following assumption for $f$.}
\begin{assumption}\label{a2-1}
Suppose that $f: N\times T^m N \to TN$ is a $C^1$ map such that $f(p,u)\in T_p N$ for
every $p\in N$, $u=(u_1,\cdots,u_m)\in T_p^m N$.
And there exists a $C_0>0$ such that for every
$p\in N$, $u\in T^m_p N$,
\begin{equation}\label{e3-0a}
|f(p,u)|_{T_p N}\le C_0(1+|u|_{T_p N}),\ |\nabla_p f(p,u)|_{T_p N}+|\nabla_u f(p,u)|_{T_p N}\le C_0,
\end{equation}
where $\nabla_p$ and $\nabla_u$ denote the covariant derivative with respect to the variables $p$ in $N$ and $u$ in $T^m N$ respectively.
\end{assumption}

Now we define a $C^1$ extension $\bar f:\R^L\times \R^{m L}\to \R^L$ of $f$ as follows
\begin{equation}\label{e2-5}
\bar f(p,u):=
\begin{cases}
& \phi\big({\rm dist}_N(p)\big)f\big(P_N(p),\Pi_N(P_N(p))u\big),\ p\in B(N,2\delta_0),\\
& 0,\ \ p\in \R^L/B(N,2\delta_0).
\end{cases}
\end{equation}
Here $\phi:\R\to \R$, $P_N:B(N,2\delta_0)\to N$ are the same as those in \eqref{e2-3} and
$\Pi_N(p):\R^L \to T_p N$ denotes the projection map to $T_p N$ defined by \eqref{e2-1} for every
$p \in N$.  

Note that $N$ is compact, combing \eqref{e2-5} with \eqref{e3-0a}
we obtain immediately following estimates for the extension $\bar f:\R^L\times \R^{m L}\to \R^L$ of $f$.
\begin{equation}\label{e3-1a}
|\bar f(p,u)|+|\bar \nabla_p \bar f(p,u)|\le C_1(1+|u|), |\bar \nabla_u \bar f(p,u)|\le C_1,\ \ \forall\ p\in \R^L, u\in \R^{m L},
\end{equation}
Here $\bar \nabla_p$ and $\bar \nabla_u$ denote the gradient in $\R^L$ with respect to variables
$p$ and $u$ respectively.

\subsection{Space of Malliavin differentiable random variables}

Through this paper, we will fix a probability space $(\Omega,\scr{F},\P)$ and an $\R^m$-valued standard Brownnian motion
$\{B_t=(B_t^1,\cdots,B_t^m)\}_{t\ge 0}$ on  $(\Omega,\scr{F},\P)$ with some $m\in \Z_+$. Let $\{\scr{F}_t\}_{t\ge 0}$ denote
the natural filtration associated with $\{B_t\}_{t\ge 0}$. For simplicity we call a process adapted (or predictable)
when it is adapted (or predictable) with respect to the filtration $\{\scr{F}_t\}_{t\ge 0}$.

Set
\begin{equation}\label{e2-4}
\begin{split}
\F C_b^\infty(\R^L):=&\Big\{\xi(\omega)=\left(\xi^1(\omega),\cdots,\xi^L(\omega)\right)\Big|
\xi^i(\omega)=g^i\left(B_{t_{i1}},\cdots, B_{t_{ik_i}}\right),\ \forall\ 1\le i \le L\\
& {\rm for\ some}\ g^i\in C_b^\infty(\R^{mk_i};\R),\ k_i\in \mathbb{N}_+,\ 0<t_{i1}<\cdots<t_{ik_i}\Big\}.
\end{split}
\end{equation}
Let $\mathbb{D}: \F C_b^\infty(\R^L)\to L^2(\Omega; L^2([0,T];\R^m\times \R^L);\P)$ be the gradient operator such that
for every $\xi\in \F C_b^\infty(\R^L)$ with expression \eqref{e2-4} and non-random  $\eta\in L^2([0,T];\R^m)$,
\begin{align*}
&\D \xi(\omega)(t)=\left(\D \xi^1(\omega)(t),\cdots,\D \xi^L(\omega)(t)\right),\\
& \quad \quad \int_0^T \D\xi^i(\omega)(t)\cdot \eta(t)dt\\
&=\lim_{\e \to 0}
\frac{g^i\left(B_{t_{i1}}+\e\int_0^{t_{i1}}\eta(s)\,ds,\cdots, B_{t_{ik_i}}+\e\int_0^{t_{ik_i}}\eta(s)\,ds\right)-g^i\left(B_{t_{i1}},\cdots, B_{t_{ik_i}}\right)}{\e},
\ 1\le i\le L,
\end{align*}
where $\cdot$ denotes the inner product in $\R^m$.

For every $\xi\in \F C_b^\infty(\R^L)$, we define
\begin{align*}
\|\xi\|_{1,2}^2:=\E[|\xi|^2]+\E\left[\int_0^T |\D \xi(t)|^2 dt\right].
\end{align*}
Let $\mathscr{D}^{1,2}(\R^L):=\overline{\F C_b^\infty(\R^L)}^{\|\cdot\|_{1,2}}$ be the completion of
$\F C_b^\infty(\R^L)$ with respect to norm $\|\cdot\|_{1,2}$. It is well known that
$(\D, \F C_b^\infty(\R^L))$ could be extended to a closed operator
$(\D,\mathscr{D}^{1,2}(\R^L))$.

We define the space of $N$-valued Malliavin differentiable random variables as follows
\begin{align*}
\mathscr{D}^{1,2}(N):=\{\xi\in \mathscr{D}^{1,2}(\R^L); \xi(\omega)\in N\ {\rm for\ a.s.}\ \omega\in \Omega \}.
\end{align*}

We refer readers to the monograph \cite{N} for detailed introduction on the theory of Malliavin calculus.

\subsection{Other notations} We use := as a way of definition. Let $\mathbb{T}^m=\R^m/\mathbb{Z}^m$ be the $m$-dimensional torus.
For every $x\in \T^m$, $p\in \R^L$ and $r>0$, set $B_{\T^m}(x,r):=\{y\in \T^m; |y-x|<r\}$
and $B(p,r):=\{q\in \R^L;
|q-p|<r\}$. Let $dt$ and $dx$ be the Lebesgue measure on $[0,T]$ and $\T^m$ respectively.
We denote the derivative, gradient and Laplacian with respect to the variable $x\in \T^m$
by $\partial_{x_i}$, $\nabla_x$ and $\Delta_x$ respectively. The covariant derivative for the variable in
$N$ is denoted by $\nabla$, while we use $\bar \nabla$ and $\bar \nabla^2$ to represent the first and second order
gradient operator in $\R^L$ respectively.
\emph{We use $\langle, \rangle$ to denote
both the Riemannian metric on $TN$ and the Euclidean inner product on $\R^L$ (note that for every $p\in N$ and $u,v\in T_p N$,
we have $\langle u,v\rangle_{T_p N}=\langle u,v\rangle_{\R^L}$). Meanwhile let $\cdot$ denote the inner product in
$\R^m$ (in the tangent space of $\T^m$).} Without extra emphasis, we use a.s. and a.e. to mean almost sure with respect to
$\P$ and almost every where with respect to Lebesgue measure on $\T^m$ respectively. \emph{Throughout the paper, the constant
$c_i$ will be independent of $\e$.} For any $q\ge 1$ and $k\in \mathbb{N}_+$, set
\begin{equation*}
\begin{split}
&C^k(\T^m;N):=\{u\in C^k(\T^m;\R^L);u(x)\in N\ {\rm for\ every}\ x\in \T^m\},\\
&L^q(\T^m;\R^L):=\left\{u:\T^m\to \R^L; \|u\|_{L^q(\T^m;\R^L)}^q:=\int_{\T^m} |u(x)|^q dx<\infty\right\},\\
&L^q(\T^m;N):=\{u\in L^q(\T^m;\R^L); u(x)\in N \ {\rm for}\ {\rm a.e.}\ x\in \T^m\}.\\
\end{split}
\end{equation*}

\section{Main theorems and their applications}\label{section2.1}

\subsection{$N$-valued BSDE}

In this subsection we are going to give the definition of $N$-valued BSDE through the BSDE on ambient space $\R^L$.
Fixing a time horizon $T\in (0,\infty)$, $m\in \mathbb{N}_+$ and $q\in (1,\infty)$, we define
\begin{equation*}
\begin{split}
\mathscr{S}^q(\R^L):=& \Big\{Y:[0,T]\times \Omega \to \R^L; Y\ \text{is}\ \text{predictable},\ \E\Big[\sup_{t\in [0,T]}|Y_t|^q\Big]<\infty,\\
&\ t\mapsto Y_t(\omega)\ \text{is\ continuous \ on }[0,T]\ \text{for}\ a.s.\ \omega\in \Omega\Big\},\\
\mathscr{S}^q(N):=& \Big\{Y\in \mathscr{S}^q(\R^L); \text{for\ any}\ t\in [0,T],\ Y_t\in N\ a.s. \Big\}.
\end{split}
\end{equation*}
\begin{equation*}
\begin{split}
\M^q_m(\R^L):=& \Big\{Z:[0,T]\times \Omega \to \R^{m L}; Z\ \text{is}\ \text{predictable},\
\E\Big[\Big(\int_0^T |Z_t|^2 dt\Big)^{q/2}\Big]<\infty \Big\}.
\end{split}
\end{equation*}

We usually write the components of a $Z\in \M^q_m(\R^L)$ by
$Z_t(\omega)=\big\{Z_t^{i,j}(\omega);1\le i \le m; 1\le j \le L\big\}$
and set
$$Z_t^i(\omega)=(Z_t^{i,1}(\omega),\cdots, Z_t^{i,L}(\omega))\in \R^L,\ \forall\ t\in [0,T],\ 1\le i \le m, \omega\in \Omega.$$
Let  
\begin{equation*}
\begin{split}
\S^q\oplus\M^q_m(N):=& \Big\{(Y,Z); Y\in \S^q(N),\ Z\in \M^q_m(\R^L),\
\\
& \text{and}\ Z_t^i\in T_{Y_t} N\ \text{for}\ dt\times \P\ a.s.\ (t,\omega)\in
[0,T]\times \Omega,\ 1\le i \le m\Big\}.
\end{split}
\end{equation*}

\begin{definition}\label{d2-1}
We call a pair of process $(Y,Z)$ is a solution of $N$-valued
BSDE \eqref{d2-1-1} if $(Y,Z)\in \S^q\oplus\M^q_m(N)$ for some $q\ge 2$ and satisfies
the following equation in $\R^L$ (where $(Y,Z)$ is viewed as an $\R^L\times \R^{mL}$-valued process)
\begin{equation}\label{d2-1-1}
Y_t=\xi-\sum_{i=1}^m \int_t^T Z_s^i dB_s^i-\sum_{i=1}^m\frac{1}{2}\int_t^T \bar A(Y_s)(Z_s^i,Z_s^i)ds
+\int_t^T  \bar f (Y_s,Z_s)ds.
\end{equation}
Here $\xi:\Omega \to N\subset \R^L$ is an $N$-valued $\scr{F}_T$ measurable random variable,
$\bar A:\R^L \to L(\R^L\times \R^L;\R^L)$ and $\bar f:\R^L\times \R^{mL}\to \R^L$ are defined by \eqref{e2-3} and
\eqref{e2-5} respectively.
\end{definition}

\begin{rem}\label{r2-2}
Let $(U,\varphi)$ be a local coordinate on $N$ such that $U\subset N$ and $\varphi:U \rightarrow \varphi(U)\subset \R^n$ is
a smooth diffeomorphism. Suppose that $(Y,Z)$ is a solution of $N$-valued BSDE \eqref{d2-1-1} with
$Y$ always staying in $U$. Then by applying It\^o formula to
$\varphi(Y_t)$ (by the same computation in the proof of Proposition \ref{p2-1} below) it is
not difficult to verify that $\left(\varphi(Y),d\varphi(Y)(Z)\right)$ is a solution of \eqref{e1-3}
defined by \cite{B1,B2} with $\Gamma_{ij}^k$ being the Christoffel symbols associated with
Levi-Civita connection $\nabla$, where $d\varphi: TN \rightarrow \R^n$ denotes the tangential map of $\varphi:U\subset N\to \R^n$.
\end{rem}

\begin{rem}
Note that the fundamental form $A$ in \eqref{d2-1-1} will depend on
the Riemmannian metric (due to the decomposition of tangential direction and
normal direction) and associated Levi-Civita connection $\nabla$ on $N$. But we are not sure
whether Definition \ref{d2-1} could be extended to the case that $N$ is only a smooth
manifold endowed with an affine connection.
\end{rem}

Now we will give the following result about the relation between a solution of $N$-valued BSDE and
a general $\R^L$-valued solution of BSDE \eqref{d2-1-1}.

\begin{theorem}\label{t2-2}
Suppose $Y\in \S^q(\R^L)$, $Z\in \M_m^q(\R^L)$ with some $q\ge 2$ and $m\ge 1$ is an
$\R^L$-valued solution of
 BSDE \eqref{d2-1-1}
which satisfies that
\begin{equation}\label{t2-2-2}
|Z_t(\omega)|\le C_2,\ dt\times\P-{\rm a.e.}\ (t,\omega)\in [0,T]\times \Omega,
\end{equation}
for some $C_2>0$. If we also assume that the terminal value $\xi\in N\subset \R^L$ a.s. in \eqref{d2-1-1},
then $(Y,Z)$ is a  solution of $N$-valued BSDE \eqref{d2-1-1}.
\end{theorem}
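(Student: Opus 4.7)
The plan is to show $V(Y_t)=0$ a.s.\ for a carefully chosen smooth nonnegative function $V:\R^L\to\R_+$ that coincides with $\text{dist}_N^2$ near $N$ and is bounded away from zero elsewhere, and then to extract $Z\in TN$ as a residual nonnegative contribution in the It\^o drift. Concretely, I would choose $V$ smooth on $\R^L$ with $V=\text{dist}_N^2$ on $B(N,\delta_0)$, $V\ge c>0$ on $\R^L\setminus B(N,\delta_0)$, and $\bar\nabla V,\bar\nabla^2 V$ globally bounded; the terminal condition $\xi\in N$ gives $V(Y_T)=0$. Writing \eqref{d2-1-1} in forward SDE form $dY_t=\sum_iZ_t^i\,dB_t^i+\tfrac12\sum_i\bar A(Y_t)(Z_t^i,Z_t^i)\,dt-\bar f(Y_t,Z_t)\,dt$ and applying It\^o, the stochastic integral is a true martingale (since $\bar\nabla V$ is bounded and $|Z|\le C_2$), so after taking expectations
\begin{equation*}
\E[V(Y_t)]=-\int_t^T\E[D(s)]\,ds,\ \ D(s):=\tfrac12\sum_i\langle\bar\nabla V(Y_s),\bar A(Y_s)(Z_s^i,Z_s^i)\rangle-\langle\bar\nabla V(Y_s),\bar f(Y_s,Z_s)\rangle+\tfrac12\sum_i\langle\bar\nabla^2 V(Y_s)Z_s^i,Z_s^i\rangle.
\end{equation*}

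On $B(N,\delta_0)$ I would differentiate $V(p)=|p-P_N(p)|^2$ directly. Using the identity $\langle\partial_jP_N(p),p-P_N(p)\rangle\equiv 0$ (because $\partial_jP_N(p)\in T_{P_N(p)}N$ while $p-P_N(p)\in T_{P_N(p)}^{\bot}N$), one obtains $\bar\nabla V(p)=2(p-P_N(p))$ and $\bar\nabla^2V(p)=2(I-dP_N(p))$; here $dP_N$ is symmetric because $P_N=\bar\nabla\Phi$ for $\Phi(p)=\tfrac12(|p|^2-\text{dist}_N^2(p))$. In particular $\bar\nabla V$ is purely normal at $P_N(p)$, and since $\bar f(p,Z)\in T_{P_N(p)}N$ by \eqref{e2-5}, the $\bar f$-term in $D$ vanishes identically on $B(N,\delta_0)$.

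The heart of the computation is a cancellation between the $\bar A$-drift and the Hessian term. Set $q=P_N(p)$, $\nu=p-q$, and decompose $Z=Z^T+Z^{\bot}$ at $q$. Since $P_N$ is constant along the normal ray from $q$, $d^2P_N(q)[u^{\bot},u^{\bot}]=0$, so $\bar A(p)(Z,Z)=d^2P_N(q)[Z,Z]=A(q)(Z^T,Z^T)+2\,d^2P_N(q)[Z^T,Z^{\bot}]$, and the cross-piece lies in $T_qN$. Pairing with $\nu\in T_q^{\bot}N$ annihilates that cross term, and via the Weingarten identity $\langle A(u,u),\nu\rangle=\langle W_{\nu}u,u\rangle$ with $W_{\nu}u:=d^2P_N(q)[\nu,u]$, the $\bar A$-drift becomes $\sum_i\langle W_{\nu}Z_i^T,Z_i^T\rangle$. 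Taylor-expanding $dP_N(p)=\Pi_q+W_{\nu}+O(|\nu|^2)$ on $T_qN$ and $dP_N(p)|_{T_q^{\bot}N}=0$ then shows the Hessian term contributes $\sum_i|Z_i^{\bot}|^2-\sum_i\langle W_{\nu}Z_i^T,Z_i^T\rangle+O(|\nu|^2)|Z_i|^2$. The two Weingarten pieces cancel, leaving $D(s)\ge\sum_i|(Z_s^i)^{\bot}|^2-C\,V(Y_s)$ on $B(N,\delta_0)$. Outside $B(N,\delta_0)$, boundedness of all coefficients and $|Z|\le C_2$ combined with $V\ge c$ give $-D(s)\le CV(Y_s)$ trivially, so globally $\E[V(Y_t)]\le C\int_t^T\E[V(Y_s)]\,ds$.

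A backward Gronwall argument with $\E[V(Y_T)]=0$ then forces $V(Y_t)\equiv 0$, i.e., $Y_t\in N$ a.s.\ for every $t$; path-continuity of $Y$ upgrades this to $Y\in\S^q(N)$. On the full-measure event $\{Y_s\in N\}$ the $O(V)$ remainder disappears and $D(s)=\sum_i|(Z_s^i)^{\bot}|^2\ge 0$ exactly, so $\int_0^T\E[D(s)]\,ds=0$ forces $(Z_s^i)^{\bot}=0$ for $dt\times\P$-a.e.\ $(s,\omega)$, i.e., $Z_s^i\in T_{Y_s}N$. I expect the main obstacle to be the Weingarten cancellation itself: the naive bound $D\ge-C\sqrt V$ (from $|\nu|=\sqrt V$) would yield only $\E V(Y_t)\lesssim(T-t)^2$, insufficient to close the loop; identifying the exact algebraic cancellation between the second-fundamental-form drift and the linear-in-$\nu$ piece of the Hessian, via symmetry of $dP_N$ and the Weingarten identity $\langle A(u,u),\nu\rangle=\langle W_{\nu}u,u\rangle$, is what allows the linear Gronwall estimate to collapse.
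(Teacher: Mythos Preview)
Your proposal is correct. The first half---building a smooth modification $V$ of $\mathrm{dist}_N^2$, applying It\^o's formula, and closing a backward Gronwall inequality $\E[V(Y_t)]\le C\int_t^T\E[V(Y_s)]\,ds$ to force $Y_t\in N$---is exactly the paper's strategy. The paper works in coordinates and bounds the discrepancy $\partial^2_{ij}P_N(P_N(p))-\partial^2_{ij}P_N(p)$ by $O(|\nu|)$ to obtain $\bar\nabla^2 G(u,u)+\langle\bar\nabla G,\bar A(u,u)\rangle\ge -cG|u|^2$, while you reach the same bound (sharpened by the extra $\sum_i|(Z_s^i)^{\bot}|^2$) through the Weingarten cancellation; the algebraic content is the same, and your observation that $P_N=\bar\nabla\Phi$ makes $d^2P_N$ totally symmetric is what makes the cancellation transparent.

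The genuinely different step is the tangency $Z_s^i\in T_{Y_s}N$. The paper argues via Malliavin calculus: once $Y_t\in N$ a.s., the representation $Z_t^i=\mathbb D Y_t(t)\cdot e_i$ (from \cite{KLT,PP2}) together with Sugita's pathwise characterization of $\mathscr D^{1,2}$ \cite{Su} forces the Malliavin gradient, hence $Z$, to be tangent to $N$. Your route is more elementary and self-contained: you keep the nonnegative piece $\sum_i|(Z_s^i)^{\bot}|^2$ in the drift, and once $V(Y_\cdot)\equiv 0$ is known, the identity $\int_0^T\E[D(s)]\,ds=0$ combined with the exact evaluation $D(s)=\sum_i|(Z_s^i)^{\bot}|^2$ on $N$ (where $\bar\nabla V=0$ and $\bar\nabla^2 V=2(I-\Pi_{Y_s})$) forces the normal component to vanish. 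This avoids the Malliavin machinery entirely, at the modest cost of carrying the finer drift estimate through the computation; conversely, the paper's Malliavin argument would in principle deliver $Z\in TN$ from $Y\in N$ alone, without revisiting the drift.
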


With Theorem \ref{t2-2}, we can obtain the existence of a unique $N$-valued solution of \eqref{d2-1-1}
by several known results on the $\R^L$-valued solution of general BSDE whose generator has quadratic growth.

\begin{corollary}\label{c2-1}
Suppose $\xi\in \mathscr{D}^{1,2}(N)$ and
\begin{equation}\label{t2-2-1}
|\D \xi(\omega)(t)|\le C_3, dt\times\P-{\rm a.e.}\ (t,\omega)\in [0,T]\times \Omega.
\end{equation}
Then we can find a positive constant $T_0=T_0(C_3)$
such that there exists a unique  solution $(Y,Z)$ to  $N$-valued BSDE \eqref{d2-1-1}
in
time interval $[0,T_0]$ (with terminal value $\xi$) which satisfies
\eqref{t2-2-2}
for some $C_2>0$.
\end{corollary}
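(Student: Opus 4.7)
The plan is to invoke Theorem \ref{t2-2}, so it suffices to construct an $\R^L$-valued solution $(Y,Z)$ to \eqref{d2-1-1} on a short interval $[0,T_0]$ with a uniform bound $|Z_t|\le C_2$ a.s., and then note that since the terminal value $\xi$ lies in $N$ almost surely, Theorem \ref{t2-2} automatically upgrades $(Y,Z)$ to an $N$-valued BSDE solution in $\S^q\oplus \M^q_m(N)$.

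First I would record that the $\R^L$-valued generator
$$
G(y,z):= -\tfrac12\sum_{i=1}^m \bar A(y)(z^i,z^i)+\bar f(y,z),\qquad (y,z)\in\R^L\times \R^{mL},
$$
is globally smooth with uniformly bounded first derivative in $y$ and uniformly bounded first derivative in $z$ that grows at most linearly in $|z|$; this follows from \eqref{e3-1a} together with the fact that $\bar A$ and its derivatives are bounded on $\R^L$ because $\bar A$ is smooth and compactly supported in $B(N,2\delta_0)$. In particular $G$ is at most quadratic in $z$ uniformly in $y$, which is the standard setup for multi-dimensional quadratic BSDE.

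Next I would carry out a short-time Picard contraction in the complete metric subspace
$$
\mathcal{X}_{T_0,C_2}:=\Big\{(Y,Z)\in \S^2(\R^L)\oplus \M_m^2(\R^L):\ |Z_t(\omega)|\le C_2\ dt\times\P\text{-a.e.}\Big\},
$$
using the norm $\E\!\left[\sup_{t\le T_0}|Y_t|^2+\int_0^{T_0}|Z_t|^2\,dt\right]^{1/2}$. The key a priori bound on $|Z|$ is derived by (formally) differentiating \eqref{d2-1-1} in the Malliavin sense: the process $D_\cdot Y$ solves a linear BSDE driven by $\bar\nabla_y G$ and $\bar\nabla_z G$, both of which are bounded on the set $|z|\le C_2$; combined with the $L^\infty$ bound \eqref{t2-2-1} on $D\xi$, a Girsanov/BMO-type argument (as in \cite{KLT} or the short-time part of \cite{Te,HR}) yields the identification $Z_t=D_t Y_t$ and the bound
$$
|Z_t(\omega)|\le C_3\,\exp\bigl(K T_0(1+C_2)\bigr)
$$
for some constant $K$ depending only on $C_0,C_1,\bar A$. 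Choosing $C_2:=2C_3$ and then $T_0=T_0(C_3)$ small enough so that the right-hand side is $\le C_2$ and so that the associated Picard map is a strict contraction on $\mathcal{X}_{T_0,C_2}$ produces a unique fixed point in this space.

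The main obstacle is exactly this last step: obtaining the closed $L^\infty$ bound on $Z$ and the contraction estimate simultaneously on the same interval $[0,T_0]$. The contraction part is delicate because $G$ is only locally Lipschitz in $z$ with Lipschitz constant of order $1+|z|$, so without the a priori bound $|Z|\le C_2$ the usual estimates fail; conversely, the a priori bound requires controlling the Malliavin derivative of the iterates, which forces an induction showing that each Picard iterate stays in $\mathcal{X}_{T_0,C_2}$ and is Malliavin differentiable with $|DY_t|\le C_2$. Once these two estimates are coupled and closed on a suitably small $T_0=T_0(C_3)$, one obtains both existence and uniqueness of an $\R^L$-valued solution with bounded $Z$. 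Applying Theorem \ref{t2-2} then gives the corollary, with uniqueness in $\S^q\oplus\M^q_m(N)$ under the constraint \eqref{t2-2-2} inherited from the uniqueness of the fixed point in $\mathcal{X}_{T_0,C_2}$.
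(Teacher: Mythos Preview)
Your proposal is correct and follows essentially the same route as the paper: establish the quadratic-growth / locally-Lipschitz estimates \eqref{t2-2-3} on the $\R^L$-valued generator, obtain a short-time $\R^L$-valued solution with $|Z|\le C_2$ via the Malliavin/Picard machinery of \cite{KLT} (or \cite{HR}), and then apply Theorem \ref{t2-2}. The only cosmetic difference is that the paper simply cites \cite[Theorem 3.1]{KLT} and \cite[Theorem 2.1]{HR} (with a remark that the slightly weaker Lipschitz condition in $y$ is handled by the same proof), whereas you sketch the underlying contraction-plus-Malliavin argument explicitly.
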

\begin{proof}
According to \eqref{e2-3} and \eqref{e3-1a} we have
for every $y_1,y_2\in \R^L$ and $z_1,z_2\in \R^{m L}$,
\begin{equation}\label{t2-2-3}
\begin{split}
&\left|\bar A(y_1)(z_1,z_1)-\bar A(y_2)(z_2,z_2)\right|\le c_1(1+|z_1|^2+|z_2|^2)\left(|y_1-y_2|+|z_1-z_2|\right),\\
&\left|\bar f(y_1,z_1)-\bar f(y_2,z_2)\right|\le c_1(1+|z_1|+|z_2|)\left(|y_1-y_2|+|z_1-z_2|\right).
\end{split}
\end{equation}

 Based on \eqref{t2-2-1} and \eqref{t2-2-3}, if we view \eqref{d2-1-1} as an $\R^L$-valued BSDE,
 by \cite[Theorem 3.1]{KLT} or \cite[Theorem 2.1]{HR} (although \eqref{t2-2-3} is slightly different from
 those in \cite{KLT} where associated coefficients are required to be globally Lipschitz continuous with respect to variable $y$, following the same
 procedure in the proof of \cite[Theorem 3.1]{KLT} we can still obtain the desired conclusion here, see also
 the arguments in \cite[Example 2.2]{KLT})
we can find a $T_0>0$ such that there exists a unique solution $(Y,Z)$ with $Y\in \scr{S}^4(\R^L)$, $Z\in \M^4_m(\R^L)$ to \eqref{d2-1-1}
in time interval $t\in [0,T_0]$ which satisfies \eqref{t2-2-2} for some $C_2>0$.

Then applying Theorem \ref{t2-2} we obtain the desired conclusion immediately.
\end{proof}

Similarly, according to \cite{XZ}, under some condition on the existence of a Lyapunov function, we can
also obtain the unique existence of a global  Markovian solution of $N$-valued BSDE \eqref{d2-1-1} by applying Theorem \ref{t2-2}, and
we omit the details here.

Moreover, without any convexity condition (such as the existence of Lyapunov function or doubly convex function), we also have
the unique existence of global Markovian solution of $N$-valued BSDE \eqref{d2-1-1} when $m=1$.

\begin{theorem}\label{t2-3}
Assume $m=1$. Given an arbitrary $T>0$, suppose $\xi=h(B_T)$ for some $h\in C^1(\T^m;N)$ in \eqref{d2-1-1}
(since we could also view $h\in C^1(\T^m;N)$ as a function $h\in C^1(\R^m;N)$, $h(B_T)$ is well defined here).
Then there exists a unique solution $(Y,Z)$ of
$N$-valued BSDE \eqref{d2-1-1}
in time interval $[0,T]$ which satisfies
\eqref{t2-2-2} for some $C_2>0$.
\end{theorem}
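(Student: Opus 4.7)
The plan is to exploit Theorem \ref{t2-2} by constructing a Markovian $\R^L$-valued solution $Y_t=u(t,B_t)$, $Z_t=\partial_x u(t,B_t)$ of the BSDE with $|Z|\le C_2$ a.s., from which Theorem \ref{t2-2} automatically delivers $N$-valuedness. Matching It\^o's formula for $u(t,B_t)$ with \eqref{d2-1-1} shows that $u:[0,T]\times\R\to\R^L$ must solve the backward semilinear heat equation
\begin{equation*}
\partial_t u + \tfrac{1}{2}\partial_x^2 u - \tfrac{1}{2}\bar A(u)(\partial_x u,\partial_x u) + \bar f(u,\partial_x u)=0,\qquad u(T,x)=h(x).
\end{equation*}
The time reversal $v(s,x):=u(T-s,x)$ transforms this into the forward Cauchy problem
\begin{equation*}
\partial_s v = \tfrac{1}{2}\partial_x^2 v - \tfrac{1}{2}\bar A(v)(\partial_x v,\partial_x v) + \bar f(v,\partial_x v),\qquad v(0,x)=h(x),
\end{equation*}
which, on $\{v\in N\}$, is precisely the heat flow of harmonic maps from $\T^1$ to $N$ perturbed by the tangential Lipschitz drift $\bar f$. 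Periodicity of $h$ together with uniqueness forces $v$ to be $1$-periodic in $x$, so we effectively work on the compact torus $\T^1$.

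First I would construct a short-time smooth solution $v\in C^{1,2}([0,T_0]\times\T^1;\R^L)$ by a standard contraction argument using the Lipschitz estimates \eqref{t2-2-3} for $\bar A$ and $\bar f$, and then verify that this solution remains on $N$. The latter relies on $\bar A(v)(\partial_x v,\partial_x v)$ pointing in the normal direction at points of $N$, so the squared distance $w(s,x):=\mathrm{dist}_N^2(v(s,x))$ satisfies a linear parabolic differential inequality with vanishing initial data, forcing $w\equiv 0$; this is essentially the deterministic counterpart of Theorem \ref{t2-2}, carried out at the PDE level.

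The main obstacle is the a priori gradient bound
\begin{equation*}
\sup_{(s,x)\in[0,T]\times\T^1}|\partial_x v(s,x)|\le C_2,
\end{equation*}
which is needed both to rule out finite-time blow-up and to secure \eqref{t2-2-2}. Here the hypothesis $m=1$ enters decisively: the intrinsic Bochner formula for the energy density $e(s,x):=|\partial_x v(s,x)|^2$ loses both its domain Ricci contribution (the spatial domain is one-dimensional) and its target Riemann contribution (the quantity $\langle R^N(X,X)X,X\rangle$ vanishes identically), collapsing to
\begin{equation*}
\partial_s e = \tfrac{1}{2}\partial_x^2 e - |\nabla_x\partial_x v|^2 + 2\langle \partial_x v,\partial_x\bar f(v,\partial_x v)\rangle.
\end{equation*}
The perturbation term is controlled via \eqref{e3-1a} by Young's inequality, absorbing the $|\partial_x^2 v|$-factor from $\partial_x\bar f$ into the negative intrinsic Hessian $-|\nabla_x\partial_x v|^2$; coupled with the $L^1$ bound obtained by integrating the identity over $\T^1$, the parabolic maximum principle on the compact domain $\T^1$ then yields the uniform bound $C_2=C_2(T,h,N)$, together with global existence on $[0,T]$.

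Finally, setting $Y_t:=u(t,B_t)$ and $Z_t:=\partial_x u(t,B_t)$ with $u(t,x):=v(T-t,x)$, It\^o's formula produces a pair satisfying the $\R^L$-valued BSDE \eqref{d2-1-1} with $|Z_t|\le C_2$, and Theorem \ref{t2-2} upgrades the pair to an $N$-valued solution in $\S^q\oplus\M^q_1(N)$. For uniqueness, any other solution $(Y',Z')$ satisfying \eqref{t2-2-2} with some constant $C_2'$ confines the generator $\tfrac{1}{2}\bar A(Y')(Z',Z')-\bar f(Y',Z')$ to a region where \eqref{t2-2-3} makes it Lipschitz in $(Y',Z')$, so the standard uniqueness argument for Lipschitz BSDEs (of the type quoted in Corollary \ref{c2-1}) applies directly to conclude.
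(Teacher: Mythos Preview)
Your proposal is correct and follows essentially the same route as the paper: reduce to the forward semilinear parabolic system for $v$, show $v$ stays on $N$, derive the differential inequality $\partial_s|\partial_x v|^2\le \tfrac12\partial_x^2|\partial_x v|^2+c\,|\partial_x v|^2$ and apply the maximum principle to obtain a global gradient bound, then set $(Y_t,Z_t)=(v(T-t,B_t),\partial_x v(T-t,B_t))$ and invoke Theorem~\ref{t2-2}. The only cosmetic differences are that the paper carries out the Bochner computation extrinsically in $\R^L$ (using $\langle A(v)(\partial_x v,\partial_x v),\partial_x v\rangle=0$ and the decomposition $\partial_{xx}^2 v=(\partial_{xx}^2 v)^T+A(v)(\partial_x v,\partial_x v)$) rather than invoking the intrinsic formula, and that your ``$L^1$ bound'' step is unnecessary once the linear differential inequality for $e$ is in hand.
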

\subsection{$L^2(\T^m;N)$-valued BSDE}

Still for a given time horizon $T\in (0,\infty)$, we define
\begin{equation*}
\begin{split}
\scr{S}^2(\T^m;N):=\Big\{& Y:[0,T]\times \Omega \to L^2(\T^m;N); Y\ {\rm is \ predictable},\\
 &t\mapsto Y_t(\omega)\ {\rm is\ continuous\ in}\  L^2(\T^m;\R^L)\ {\rm for\ a.s.}\ \omega\in \Omega,\\
& \|Y\|_{\scr{S}^2(\T^m;\R^L)}^2:=\E\big[\sup_{t\in [0,T]}\|Y_t\|_{L^2(\T^m;\R^L)}^2\big]<\infty\Big\},
\end{split}
\end{equation*}
\begin{equation*}
\begin{split}
\M^2(\T^m;\R^L):=\Big\{ & Z:[0,T]\times \Omega \to  L^2(\T^m; \R^{mL});\ Z\ {\rm is \ predictable},\\
&\|Z\|_{\M^2(\T^m;\R^L)}^2:=\E\Big[\int_0^T \|Z_s\|_{L^2(\T^m;\R^{mL})}^2ds\Big]<\infty\Big\}.
\end{split}
\end{equation*}
Indeed, if
$\sup_{t\in [0,T]}\|Y_t-\tilde Y_t\|_{L^2(\T^m;\R^L)}=0$  a.s. 
for some $Y,\tilde Y\in \scr{S}^2(\T^m;N)$, then we view $Y$ and $\tilde Y$ as the same element in
$\scr{S}^2(\T^m;N)$. Similar equivalent relations also hold for the elements $Z$ in $\M^2(\T^m;\R^L)$.

We usually write the components of a $Z\in \M^2(\T^m;\R^L)$ by
$Z_t^x(\omega)=\big\{Z_t^{x,i,j}(\omega); 1\le i \le m, 1\le j \le L\big\}$ for any
$t\in [0,T]$, $x\in \R^d$ and $\omega\in \Omega$. We also set
$$Z_t^{x,i}(\omega)=(Z_t^{x,i,1}(\omega),\cdots, Z_t^{x,i,L}(\omega))\in \R^L,\ \forall\ t\in [0,T],\ x\in \R^m,\ 1\le i \le m,\ \omega\in \Omega.$$
Let
\begin{equation*}
\begin{split}
& \scr{S}\otimes \M^2(\T^m;N):=\Big\{(Y,Z); Y\in \scr{S}^2(\T^m;N), Z\in \M^2(\T^m;\R^L),\\
& \ \ \ \text{and}\ Z_t^{x,i}\in T_{Y_t^x} N\ \text{for}\ dt\times dx\times \P-{\rm a.e.}\ (t,x,\omega)\in
[0,T]\times \T^m\times \Omega,\ 1\le i \le m\Big\}.
\end{split}
\end{equation*}
Now we can give the definition of $L^2(\T^m;N)$-valued (weak) solution of a BSDE,
\begin{definition}\label{d2-2}
We call a pair of process $(Y,Z)$ is a solution of
$L^2(\T^m;N)$-valued BSDE \eqref{d2-2-2} if we can find an equivalent version of $(Y,Z)\in \scr{S}\otimes \M^2(\T^m;N)$
(still denoted by $(Y,Z)$ for simplicity of notation) such that
for a.e. $x\in \T^m$ the following
equation holds for every $t\in [0,T]$,
\begin{equation}\label{d2-2-2}
Y_t^x=h(B_T+x)-\sum_{i=1}^m \int_t^T Z_s^{x,i} dB_s^i-\sum_{i=1}^m\frac{1}{2}\int_t^T \bar A(Y_s^x)(Z_s^{x,i},Z_s^{x,i})ds
+\int_t^T \bar f (Y_s^x,Z_s^x)ds.
\end{equation}
Here $h:\T^m \to N$ is an $N$-valued non-random function, 
$\bar A:\R^L \to L(\R^L\times \R^L;\R^L)$ and $\bar f:\R^L\times \R^{m L}\to \R^L$ are defined by \eqref{e2-3} and
\eqref{e2-5} respectively.
\end{definition}

Now we will give the following results concerning about the global existence of a solution of
$L^2(\T^m;N)$ valued BSDE \eqref{d2-2-2} for an arbitrarily fixed compact Riemannian manifold $N$.

\begin{theorem}\label{t2-1}
Suppose $h\in C^1(\T^m;N)$, then for any $T>0$,
there exists a solution $(Y,Z)$ of $L^2(\T^m;N)$-valued BSDE \eqref{d2-2-2} in time interval $t\in [0,T]$.
\end{theorem}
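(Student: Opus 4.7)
The plan is to construct a solution by realizing the BSDE \eqref{d2-2-2}, parameterized by $x\in\T^m$, as a nonlinear Feynman--Kac representation of a quasi-linear parabolic system on $[0,T]\times\T^m$ with target $N$. A direct application of It\^o's formula shows that if $v:[0,T]\times\T^m\to N$ is smooth enough and satisfies
\[
\partial_s v=\tfrac12\Delta_x v-\tfrac12\bar A(v)(\nabla_x v,\nabla_x v)+\bar f(v,\nabla_x v),\qquad v(0,\cdot)=h,
\]
then $Y_t^x:=v(T-t,x+B_t)$ and $Z_t^{x,i}:=\partial_{x_i}v(T-t,x+B_t)$ satisfy \eqref{d2-2-2}; the constraint $v\in N$ forces $Z_t^{x,i}\in T_{Y_t^x}N$, so $(Y,Z)\in\scr S\otimes\M^2(\T^m;N)$. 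The task therefore reduces to constructing such a $v$ globally on $[0,T]$ as an $L^2(\T^m;N)$-valued weak solution of this harmonic-map-heat-flow-type equation.

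For this I would use Ginzburg--Landau penalization in the ambient space $\R^L$: for each $\e>0$, consider the smooth global solution $v^\e\in C^{1,2}([0,T]\times\T^m;\R^L)$ of
\[
\partial_s v^\e=\tfrac12\Delta_x v^\e-\tfrac12\bar A(v^\e)(\nabla_x v^\e,\nabla_x v^\e)+\bar f(v^\e,\nabla_x v^\e)-\tfrac{1}{2\e}\bar\nabla\bigl({\rm dist}_N^2\bigr)(v^\e),
\]
with $v^\e(0,\cdot)=h$. Global existence follows from classical parabolic theory in $\R^L$ since $\bar A,\bar f$ are smooth with bounded derivatives and the penalty confines $v^\e$ to $B(N,2\delta_0)$, where $P_N$ and $\bar\nabla\,{\rm dist}_N^2$ are smooth. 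The pair $Y_t^{\e,x}:=v^\e(T-t,x+B_t)$, $Z_t^{\e,x,i}:=\partial_{x_i}v^\e(T-t,x+B_t)$ solves the corresponding penalized $\R^L$-valued BSDE by direct It\^o.

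The central step is deriving $\e$-uniform estimates. Testing the PDE against $\partial_sv^\e$ and exploiting that $\bar A(p)(u,u)\in T_p^\perp N$ whenever $p\in N$ (so that the inner product with the tangential $\partial_sv^\e$ is controlled by a perturbation of size ${\rm dist}_N(v^\e)$) should give
\[
\sup_{s\in[0,T]}\|v^\e(s)\|_{H^1(\T^m)}^2+\|\partial_sv^\e\|_{L^2([0,T]\times\T^m)}^2+\tfrac1\e\|{\rm dist}_N(v^\e)\|_{L^2([0,T]\times\T^m)}^2\le C,
\]
independently of $\e$. An Aubin--Lions argument then extracts a subsequence $v^{\e_k}\to v$ strongly in $L^2([0,T]\times\T^m;\R^L)$ and weakly in $L^2(0,T;H^1(\T^m;\R^L))$ with $v(s,x)\in N$ a.e.; using translation invariance of Lebesgue measure on $\T^m$, this translates into $Y^{\e_k}\to Y$ strongly in $L^2(dt\otimes\P;L^2(\T^m))$ and $Z^{\e_k}\rightharpoonup Z$ weakly in $\M^2(\T^m;\R^L)$.

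The main obstacle is passing to the limit in the quadratic nonlinearity $\bar A(v^{\e_k})(\nabla v^{\e_k},\nabla v^{\e_k})$ under only weak $H^1$ convergence of $v^{\e_k}$. The key structural feature that resolves this is precisely that $\bar A(p)(z,z)\in T_p^\perp N$ for $p\in N$: pairing the limiting equation with arbitrary tangential test vector fields along $v$ annihilates this term in the limit and leaves a tangential equation that is linear in $\nabla v$ and thus stable under weak convergence, while the normal component of the equation is automatically supplied by differentiating the constraint $v\in N$. Translating back through the Feynman--Kac representation, and applying It\^o's formula to the smooth $v^{\e_k}$ before passing to the limit in $L^2(\T^m)$, then yields \eqref{d2-2-2} for a.e.\ $x\in\T^m$ and the desired $(Y,Z)\in\scr S\otimes\M^2(\T^m;N)$.
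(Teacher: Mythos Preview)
Your overall strategy---penalize in $\R^L$, derive $\e$-uniform energy bounds, extract a limit $v$, and read off $(Y,Z)$ via Feynman--Kac---matches the paper's. However, there are two genuine gaps.

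First, your penalized equation differs from the paper's in a way that undermines the scheme. The paper's approximation \eqref{e3-1} is
\[
\partial_t v_\e-\tfrac12\Delta_x v_\e=-\tfrac{1}{2\e}\,g(v_\e)+\bar f(v_\e,\nabla_x v_\e),
\]
\emph{without} the $\bar A$ term; the second fundamental form appears only in the limit, as the weak limit of the penalty. Because $\bar f$ has linear growth in $\nabla v$ and $g\in C_b^\infty$, global existence of $v_\e$ really is classical quasilinear parabolic theory. Your equation carries the quadratic term $\bar A(v^\e)(\nabla v^\e,\nabla v^\e)$, so ``global existence follows from classical parabolic theory in $\R^L$'' is not justified for $m\ge 2$; this is exactly the difficulty the whole construction is meant to circumvent. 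Nor does the penalty obviously confine $v^\e$ to $B(N,2\delta_0)$ a priori.

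Second, and more seriously, your limit passage relies on weak $H^1$ convergence together with the orthogonality $\bar A(p)(u,u)\in T_p^\perp N$. Testing tangentially does kill the $\bar A$ and penalty contributions, but it does \emph{not} make the remaining equation ``linear in $\nabla v$'': the term $\bar f(v^{\e_k},\nabla v^{\e_k})$ is merely Lipschitz in the gradient, and weak $L^2$ convergence of $\nabla v^{\e_k}$ does not imply $\bar f(v^{\e_k},\nabla v^{\e_k})\rightharpoonup \bar f(v,\nabla v)$. The paper closes this gap with substantial additional machinery you omit: a monotonicity formula for $\Psi_\e(R)$ (Lemma~\ref{l3-2}), a Bochner-type inequality for $e(v_{\e,R})$ (Lemma~\ref{l3-3}), and an $\e$-regularity lemma (Lemma~\ref{l3-4}) in the style of Chen--Struwe. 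These produce locally uniform $C^1$ bounds and local $H^2$ bounds on $v_{\e_k}$ away from a singular set $\Sigma$ of finite $m$-dimensional parabolic Hausdorff measure (Lemma~\ref{l3-5}), hence \emph{strong} local convergence $\nabla v_{\e_k}\to\nabla v$ a.e.\ off $\Sigma$. Only with this in hand does the paper pass to the limit in $\bar f$ (Step~(iii)) and then use the orthogonality argument to identify $\frac{1}{\e_k}g(v_{\e_k})\rightharpoonup \sum_i A(v)(\partial_{x_i}v,\partial_{x_i}v)$ (Step~(iv)). Your Aubin--Lions step gives only strong $L^2$ convergence of $v^{\e_k}$, which is not enough.
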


\begin{rem}
Intuitively, the global solution of $L^2(\T^m;N)$-valued BSDE \eqref{d2-2-2}  always exists
for any compact Riemannian manifold $N$ (without any other convexity condition) since the
collection $\Xi_0:=\{x\in \T^m; |Z_t^x|=+\infty\ \text{for\ some\ }\ t\in [0,T]\}$ is a Lebesgue-null set
in $\T^m$ (which could be seen in the proof of Theorem \ref{t2-1}).

Meanwhile, due to the lack of monotone condition on the generator
(see the corresponding monotone conditions in \cite{BM,MX,ZZ2,ZZ3}), it seems difficult to prove
the uniqueness of the solution of $L^2(\T^m;N)$-valued BSDE \eqref{d2-2-2}.
\end{rem}

\begin{rem}
The exceptional Lebesgue null set for $x\in \T^m$ in \eqref{d2-2-2} may depend on the choice of $h$.
We do not know whether we can find a common null set $\Xi$ which ensures \eqref{d2-2-2} valid for
every $h\in C^1(\T^m;N)$ and $x\notin \Xi$.
\end{rem}

We also have the following characterization for solution of $L^2(\T^m;N)$-valued BSDE.
\begin{proposition}\label{r2-1}
$(Y,Z)$ is an 
solution of
$L^2(\T^m;N)$-valued BSDE \eqref{d2-2-2} if and only if 
$(Y,Z)\in \scr{S}\otimes \M^2(\T^m;N)$ and
for every $\psi\in C^2(\T^m;\R^L)$ and $t\in [0,T]$ there exists a $\P$-null set $\Pi_0$ such that
for all $\omega\notin \Pi_0$, it holds that
\begin{equation}\label{r2-1-1}
\begin{split}
&\int_{\T^m}\langle Y_t^x, \psi(x)\rangle dx=\int_{\T^m}\langle h(B_T+x), \psi(x)\rangle dx-
\sum_{i=1}^m  \int_t^T\Big(\int_{\T^m}\langle Z_s^{x,i}, \psi(x)\rangle dx \Big)dB_s^i\\
&  -\sum_{i=1}^m \frac{1}{2} \int_t^T \int_{\T^m}\langle\bar A(Y_s^x)(Z_s^{x,i},Z_s^{x,i}),\psi(x)\rangle dx ds
+\int_t^T \int_{\T^m}\langle \bar f (Y_s^x,Z_s^x), \psi(x)\rangle dx ds.
\end{split}
\end{equation}
\end{proposition}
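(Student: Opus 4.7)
The plan is to prove both implications by combining the stochastic Fubini theorem with a density argument in $L^2(\T^m;\R^L)$.

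\emph{Forward direction.} Assume $(Y,Z)$ solves \eqref{d2-2-2} for a.e.\ $x\in\T^m$ and every $t\in[0,T]$. Fixing $\psi\in C^2(\T^m;\R^L)$ and $t\in[0,T]$, I would take the inner product of both sides of \eqref{d2-2-2} with $\psi(x)$ and integrate over $\T^m$. The classical Fubini theorem handles the $ds$-integrals, since $|\bar A(y)(u,u)|\le c(1+|u|^2)$ and $|\bar f(y,u)|\le c(1+|u|)$ by \eqref{e3-1a}, together with $Z\in\M^2(\T^m;\R^L)$ and the boundedness of $\psi$ on the compact torus. For the It\^o integral term, the stochastic Fubini theorem applies since $\E\int_0^T\int_{\T^m}|\psi(x)|^2|Z_s^{x,i}|^2\,dx\,ds<\infty$. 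Rearranging gives \eqref{r2-1-1}, with $\Pi_0$ collecting the $\P$-null sets produced by the exchanges.

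\emph{Backward direction.} Let $R_t^x$ denote the right-hand side of \eqref{d2-2-2}, so that the pointwise equation reads $Y_t^x=R_t^x$. Applying the same stochastic Fubini argument to $\int_{\T^m}\langle R_t^x,\psi(x)\rangle\,dx$ and comparing with \eqref{r2-1-1}, I obtain, for each fixed $\psi\in C^2(\T^m;\R^L)$ and $t\in[0,T]$, that $\int_{\T^m}\langle Y_t^x-R_t^x,\psi(x)\rangle\,dx=0$ outside a $\P$-null set. Picking a countable family $\{\psi_k\}$ dense in $L^2(\T^m;\R^L)$ and intersecting the exceptional sets over $k$ and over $t$ in $\mathbb{Q}\cap[0,T]$, I produce a $\P$-full set on which the $L^2$-function $x\mapsto Y_t^x-R_t^x$ is orthogonal to every $\psi_k$ for every rational $t$; density then forces $Y_t^x=R_t^x$ in $L^2(\T^m;\R^L)$ for every rational $t$, and Fubini yields a single $dx\otimes\P$-null set $E$ outside which $Y_t^x(\omega)=R_t^x(\omega)$ for every rational $t$.

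\emph{Extension to all $t$ and choice of version.} For $dx\otimes\P$-a.e.\ $(x,\omega)$, the map $t\mapsto R_t^x(\omega)$ is continuous, because the It\^o integral admits a continuous modification and the Lebesgue integrals are automatically continuous in $t$. Defining $\tilde Y_t^x:=R_t^x$, the pair $(\tilde Y,Z)$ lies in $\scr{S}\otimes\M^2(\T^m;N)$ and satisfies \eqref{d2-2-2} pointwise in $x$ (off $E$) for every $t\in[0,T]$. Since $\tilde Y_t=Y_t$ in $L^2(\T^m;\R^L)$ for each rational $t$ $\P$-a.s., and both $Y$ and $\tilde Y$ have $L^2(\T^m;\R^L)$-continuous sample paths, $\tilde Y$ agrees with $Y$ in the sense of the equivalence relation on $\scr{S}^2(\T^m;N)$, giving the required equivalent version.

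\emph{Main obstacle.} The delicate step is the bookkeeping of exceptional null sets, namely swapping ``for a.e.\ $x$'' with ``for every $t$''; this is resolved by the countable dense family $\{\psi_k\}$ combined with rationals in $[0,T]$, and then upgraded to all $t$ via the pathwise continuity in $t$ of the candidate modification $R$ and the $L^2$-continuity built into $\scr{S}^2(\T^m;N)$.
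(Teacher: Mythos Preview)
Your proposal is correct and follows essentially the same route as the paper: the forward direction via (stochastic) Fubini, and the backward direction by testing against a countable dense family $\{\psi_k\}\subset C^2(\T^m;\R^L)$ at rational times, then upgrading to all $t$ through the continuity in $t$ of the right-hand side together with the $L^2(\T^m;\R^L)$-continuity of $t\mapsto Y_t$ built into $\scr{S}^2(\T^m;N)$. The only cosmetic difference is that the paper records the exceptional sets in product form (a Lebesgue-null $\Xi_1\cup\Xi_2\subset\T^m$ and a $\P$-null $\Pi\subset\Omega$), isolating first the set $\Xi_2$ where $\E\int_0^T|Z_t^x|^2\,dt=\infty$ so that the It\^o integral is well defined for $x\notin\Xi_2$; you package the same content as a single $dx\otimes\P$-null set, which is equivalent for the purpose of producing the required modification.
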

\begin{proof}
If \eqref{d2-2-2} holds for a.e. $x\in \T^m$, obviously we can verify \eqref{r2-1-1}.

Now we assume that there exists a $(Y,Z)\in \scr{S}\otimes \M^2(\T^m;N)$ such that
\eqref{r2-1-1} holds a.s. for every $\psi\in C^2(\T^m;\R^L)$ and $t\in [0,T]$. Since there exists a
countable dense subset $\Theta \subset C^2(\T^m;\R^L)$ of $L^2(\T^m;\R^L)$ under $L^2$ norm, we can find
a Lebesgue null set $\Xi_1\subset \T^m$ and a $\P$-null set $\Pi\subset \Omega$ such that
\eqref{d2-2-2} holds for every $\omega\notin \Pi$, $x\notin \Xi_1$ and $t\in [0,T]\cap \mathbb{Q}$,
where
$\mathbb{Q}$ denotes the collection of all rational numbers.

Note that we have $\E\left[\int_0^T \int_{\T^m}|Z_t^x|^2 dx dt\right]<\infty$ by definition of
$\M^2(\T^m;\R^L)$. Hence there exists a Lebesgue null set $\Xi_2\subset \T^m$, such that
\begin{equation*}
\E\left[\int_{0}^T |Z_t^x|^2dt\right]<\infty,\ \ \forall\ x\notin \Xi_2.
\end{equation*}
This, along with \eqref{e3-1a} implies immediately that for every  $\omega\notin \Pi$ and
$x\notin \Xi_1\cup \Xi_2$, the function $t\mapsto \sum_{i=1}^m \int_t^T Z_s^{x,i}dB_s+\sum_{i=1}^m
\frac{1}{2}\int_t^T\bar A(Y_s^x)\left(Z_s^{x,i},Z_s^{x,i}\right)ds-\int_t^T \bar f\left(Y_s^x,Z_s^x\right)ds$
is continuous in interval $[0,T]$.
So for every $\omega\notin \Pi$, $x\notin \Xi_1\cup \Xi_2$ and
$t\in [0,T]$ we can define
\begin{align*}
\quad\quad \hat Y_s^x(\omega):=\lim_{s\to t;s\in \mathbb{Q}}&\Bigg(h(B_T+x)-\sum_{i=1}^m \Big(\int_s^T Z_r^{x,i}dB_r+
\frac{1}{2}\int_s^T\bar A(Y_r^x)\left(Z_r^{x,i},Z_r^{x,i}\right)dr\Big)\\
&+\int_s^T \bar f\left(Y_r^x,Z_r^x\right)dr\Bigg).
\end{align*}

Set
\begin{equation*}
\tilde Y_t^x(\omega):=
\begin{cases}
& Y_t^x(\omega),\ \ \ \ \ \ \ \ \ \ \ \ \ \ \ {\rm if}\ t\in [0,T]\cap \mathbb{Q}, x\notin \Xi_1\cup\Xi_2, \omega\notin \Pi,\\
& \hat Y_t^x(\omega),
\ \ \ \ \ \ \ \ \ \ \ \ {\rm if}\ t\in [0,T]\cap \mathbb{Q}^c, x\notin \Xi_1\cup\Xi_2, \omega\notin \Pi,\\
&0,\ \ \ {\rm otherwise}.
\end{cases}
\end{equation*}
Then by definition it is easy to verify that $(\tilde Y^x,Z^x)$ satisfies \eqref{d2-2-2}
for every $x\notin \Xi_1\cup \Xi_2$, $\omega\notin \Pi$ and $t\in [0,T]$.

Still by definition of $\tilde Y$, we have
$Y_t^x(\omega)=\tilde Y_t^x(\omega)$ for every $\omega\notin \Pi$, $t\in [0,T]\cap \mathbb{Q}$ and $x\notin \Xi_1\cup \Xi_2$.
Meanwhile due to $Y\in \mathscr{S}^2(\T^m;N)$ there exists a $\P$-null set $\Pi_0$ such that $t\mapsto Y_t^{\cdot}(\omega)$ is
continuous in $L^2(\T^m;\R^L)$ for every $\omega\notin \Pi_0$. This along with the definition of
$\tilde Y$ implies immediately that given any $\omega\notin \Pi\cup\Pi_0$ and $t\in [0,T]\cap \mathbb{Q}^c$,
$Y_t^{x}(\omega)=\tilde Y_t^x(\omega)$ ($=L^2$-$\lim_{s\to t;s\in \mathbb{Q}}Y_s^{\cdot}(\omega)$) for a.e. $x\in \T^m$
(the exceptional set for $x\in \T^m$ may depend on $t$). Combing all the properties above we arrive at
\begin{align*}
\sup_{t\in [0,T]}\|Y_t(\omega)-\tilde Y_t(\omega)\|_{L^2(\T^m;\R^L)}=0,\ \forall\ \omega\notin \Pi\cup\Pi_0.
\end{align*}
Hence $Y$ and $\tilde Y$ is the same element in $\mathscr{S}^2(\T^m;N)$, so we can find an equivalent version
$(\tilde Y,Z)$ of $(Y,Z)$ which satisfies \eqref{d2-2-2} a.s. for
each $x\notin \Xi_1\cup \Xi_2$.
\end{proof}

\subsection{Existence of $\nabla$-martingale with fixed terminal value}

In this subsection we will give an application of Theorem \ref{t2-3} and \ref{t2-1}
on the construction of $\nabla$-martingales, which also
illustrates that Definition \ref{d2-1} and \ref{d2-2} are natural for an $N$-valued BSDE.

\begin{proposition}\label{p2-1}
\begin{itemize}
\item [(1)] Suppose 
$(Y,Z)$ is a solution of $N$-valued BSDE \eqref{d2-1-1}.
For every
$g\in C^2(N;\R)$ and $t\in [0,T]$, let
\begin{equation}\label{p2-1-1}
M_t^g:=g(Y_t)-g(Y_0)-\sum_{i=1}^m\frac{1}{2}\int_0^t {\rm Hess}g(Y_s)(Z_s^i,Z_s^i)ds+\int_0^t \langle \nabla g(Y_s), f(Y_s,Z_s)\rangle ds,
\end{equation}
where ${\rm Hess}$ denotes the Hessian operator on $N$ associated with Levi-Civita connection $\nabla$.
Then $\{M_t^g\}_{t\in [0,T]}$ is a local martingale.

\item [(2)] Suppose $(Y,Z)$ is a solution of $L^2(\T^m;N)$-valued BSDE \eqref{d2-2-2}. Given some $g\in C^2(N;\R)$ and $x\in \T^m$ we define $\{M_t^{g,x}\}_{t\in [0,T]}$ by the same way of \eqref{p2-1-1} with $(Y_t,Z_t)$ replaced by
$(Y_t^x,Z_t^x)$. Then there exists a Lebesgue-null set $\Xi \subset \T^m$ such that
$\{M_t^{g,x}\}_{t\in [0,T]}$ is a local martingale for every $g\in C^2(N;\R)$ and $x\notin \Xi$.
\end{itemize}
\end{proposition}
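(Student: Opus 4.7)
The plan for part (1) is to apply the $\R^L$-valued It\^o formula to a $C^2$ extension of $g$, and then convert the resulting Euclidean drift into its intrinsic Riemannian form via the Gauss formula. First I would fix any $\bar g\in C^2(\R^L;\R)$ with $\bar g|_N=g$ (such an extension exists because $N$ is a compact embedded submanifold). Reading \eqref{d2-1-1} as an $\R^L$-valued SDE for $Y$ with drift $\sum_{i=1}^m\frac{1}{2}\bar A(Y_s)(Z_s^i,Z_s^i)-\bar f(Y_s,Z_s)$ and diffusion $\sum_{i=1}^m Z_s^i\,dB_s^i$, It\^o's formula would yield
\begin{equation*}
\bar g(Y_t)-\bar g(Y_0)=\sum_{i=1}^m\int_0^t\langle\bar\nabla\bar g(Y_s),Z_s^i\rangle\,dB_s^i+\int_0^t D_s\,ds,
\end{equation*}
with drift density
\begin{equation*}
D_s=\sum_{i=1}^m\tfrac{1}{2}\bigl(\langle\bar\nabla\bar g(Y_s),\bar A(Y_s)(Z_s^i,Z_s^i)\rangle+\bar\nabla^2\bar g(Y_s)(Z_s^i,Z_s^i)\bigr)-\langle\bar\nabla\bar g(Y_s),\bar f(Y_s,Z_s)\rangle.
\end{equation*}

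The next step is to recast $D_s$ intrinsically. For $p\in N$ and $u\in T_pN$, the $N$-geodesic $\gamma$ with $\gamma(0)=p$, $\gamma'(0)=u$ satisfies $\gamma''(0)=A(p)(u,u)$ (Gauss formula, since $\nabla_{\gamma'}\gamma'=0$ in $N$), so differentiating $\bar g\circ\gamma$ twice at $t=0$ gives the pointwise identity
\begin{equation*}
\Hess g(p)(u,u)=\bar\nabla^2\bar g(p)(u,u)+\langle\bar\nabla\bar g(p),A(p)(u,u)\rangle.
\end{equation*}
Because $(Y,Z)\in\S^q\oplus\M^q_m(N)$, I have $Y_s\in N$ and $Z_s^i\in T_{Y_s}N$, hence $\bar A(Y_s)(Z_s^i,Z_s^i)=A(Y_s)(Z_s^i,Z_s^i)$, $\bar f(Y_s,Z_s)=f(Y_s,Z_s)\in T_{Y_s}N$, and $\langle\bar\nabla\bar g,v\rangle=\langle\nabla g,v\rangle$ for every $v\in T_{Y_s}N$ because $\nabla g=(\bar\nabla\bar g)^T$ on $N$. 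Substituting the Hessian identity into $D_s$, the two $\langle\bar\nabla\bar g,A(\cdot,\cdot)\rangle$ contributions cancel and one obtains
\begin{equation*}
D_s=\sum_{i=1}^m\tfrac{1}{2}\Hess g(Y_s)(Z_s^i,Z_s^i)-\langle\nabla g(Y_s),f(Y_s,Z_s)\rangle.
\end{equation*}
Rearranging then shows $M_t^g=\sum_{i=1}^m\int_0^t\langle\nabla g(Y_s),Z_s^i\rangle\,dB_s^i$, which is a local martingale since $\nabla g$ is bounded on $N$ and $Z\in\M^q_m(\R^L)$.

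For part (2), the plan is to reduce pointwise in $x$ to part (1). By Definition \ref{d2-2} I can choose a version of $(Y,Z)$ and a Lebesgue-null set $\Xi_1\subset\T^m$ such that \eqref{d2-2-2} holds pathwise for every $x\notin\Xi_1$. A Fubini argument applied to $\E\int_0^T\|Z_s\|_{L^2(\T^m;\R^{mL})}^2\,ds<\infty$ and to the $dt\times dx\times\P$-a.e. tangency condition $Z_s^{x,i}\in T_{Y_s^x}N$ produces a further Lebesgue-null set $\Xi_2$ so that for every $x\notin\Xi:=\Xi_1\cup\Xi_2$ the pair $(Y^x,Z^x)$ lies in $\S^2\oplus\M^2_m(N)$ and solves the $N$-valued BSDE \eqref{d2-1-1} with terminal value $h(B_T+x)$. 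Applying part (1) to each such $(Y^x,Z^x)$ then delivers the local martingale property of $M_t^{g,x}$.

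The only delicate point, and the main (mild) obstacle, is ensuring that the exceptional set $\Xi$ in part (2) can be chosen independently of $g$. This is handled by observing that the conditions defining $\Xi$ (pointwise validity of \eqref{d2-2-2}, tangency of $Z^{x,i}$, $L^2$-integrability of $Z^x$) depend on $(Y,Z)$ alone; once $(Y^x,Z^x)$ is an $N$-valued BSDE solution in the sense of Definition \ref{d2-1}, the It\^o computation from part (1) applies simultaneously to every $g\in C^2(N;\R)$.
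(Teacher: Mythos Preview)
Your proposal is correct and follows essentially the same strategy as the paper: extend $g$ to a $C^2$ function $\bar g$ on $\R^L$, apply the Euclidean It\^o formula to $\bar g(Y_t)$ using the BSDE \eqref{d2-1-1}, and then invoke the Gauss-type identity $\Hess g(p)(u,u)=\bar\nabla^2\bar g(p)(u,u)+\langle\bar\nabla\bar g(p),A(p)(u,u)\rangle$ together with $(\bar\nabla\bar g)^T=\nabla g$ to rewrite the drift intrinsically. The only cosmetic difference is that you derive the Hessian identity by differentiating $\bar g$ along an $N$-geodesic, whereas the paper computes it via vector-field extensions and the definition \eqref{e2-2} of the second fundamental form; both are standard routes to the same formula. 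Your treatment of part~(2) (Fubini to extract a Lebesgue-null exceptional set $\Xi$ depending only on $(Y,Z)$, then apply part~(1) for each $x\notin\Xi$) is exactly what the paper's one-line remark intends.
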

\begin{proof}
We only prove part (1) of desired conclusion. The part (2) could be proved by
applying \eqref{d2-2-2} and the same procedures for the proof of (1).

By the same way of \eqref{e2-5}, we could extend $g$ to a $C^2$ function
$\bar g:\R^L \to \R$ with compact support. Since we could still view $(Y,Z)$ as an $\R^L$-valued solution to
\eqref{d2-1-1}, applying It\^o formula to $\bar g$ we obtain that the process $\{\bar M_t^{\bar g}\}_{t\in [0,T]}$ defined by
\begin{equation}\label{p2-1-2}
\begin{split}
\bar M_t^{\bar g}:=&\bar g(Y_t)-\bar g(Y_0)-\sum_{i=1}^m
\frac{1}{2}\int_0^t \Big(\bar \nabla^2 \bar g(Y_s)\big(Z_s^i,Z_s^i\big)+
\left\langle \bar \nabla \bar g(Y_s), \bar A(Y_s)(Z_s,Z_s)\right\rangle\Big)ds\\
&+
\int_0^t \langle \bar \nabla \bar g(Y_s), \bar f(Y_s,Z_s)\rangle ds
\end{split}
\end{equation}
is a local martingale. 

For every $p\in N$, $u\in T_p N$, let $X$, $\bar X$ be arbitrarily fixed smooth vector fields on $N$ and $\R^L$ satisfying
$X(p)=\bar X(p)=u$, so by \eqref{e2-2} 
we have
\begin{align*}
&\quad \bar \nabla^2 \bar g(p)(u,u)+\langle \bar \nabla \bar g(p), \bar A(p)(u,u)\rangle \\
&=\bar X\left(\langle \bar \nabla \bar g, \bar X\rangle\right)(p)-
\langle \bar \nabla \bar g(p), \bar \nabla_{\bar X}\bar X(p)\rangle+
\langle \bar \nabla \bar g(p), \bar \nabla_{\bar X}\bar X(p)-\nabla_{X} X(p) \rangle\\
&=\bar X\left(\langle \bar \nabla \bar g, \bar X\rangle\right)(p)-\langle \bar \nabla \bar g(p), \nabla_{X} X(p) \rangle
\\
&=X\left(\langle \nabla g, X\rangle\right)(p)-\langle \nabla  g(p), \nabla_{X} X(p) \rangle\\
&={\rm Hess}g(p)(X(p),X(p))= {\rm Hess}g(p)(u,u).
\end{align*}
Here in the third step above we have applied the property that $\langle \bar \nabla \bar g(p),\bar X(p)\rangle=
\langle \nabla g(p), X(p)\rangle$ for every $p\in N$ due to $(\bar \nabla \bar g(p))^{T}=\nabla g(p)$. Similarly for
every $p\in N$ and $u\in T_p^m N$ (note that $f(p,u)\in T_p N$) we obtain
\begin{align*}
\langle \bar \nabla \bar g(p), \bar f(p,u)\rangle=\langle \nabla g(p), f(p,u)\rangle.
\end{align*}
Combing all above properties with the fact that $Y_t\in N$ a.s. for every $t\in [0,T]$,
$Z_t\in T_{Y_t} N$ for $dt\times \P$-a.s. $(t,\omega)\in [0,T]\times \P$ into \eqref{p2-1-2} yields
that $\bar M_t^{\bar g}$=$M_t^g$ a.s. for every $t\in [0,T]$. Therefore we know immediately that $M_t^g$ is a local martingale.
\end{proof}

 Recall that we call the adapted process
 $\{X_t\}_{t\in [0,T]}$ a $\nabla$-martingale if it is an $N$-valued semi-martingale and for
 every $g\in C^2(N;\R)$,
 \begin{equation*}
 M_t^g:=g(X_t)-g(X_0)-\frac{1}{2}\int_0^t {\rm Hess}g(X_s)\left(dX_s,dX_s\right)
 \end{equation*}
 is a local martingale. Here $(dX_t,dX_t)$ denotes the quadratic variation for $X_t$.

Then taking $f\equiv 0$, combing Theorem \ref{t2-3}, Theorem \ref{t2-1} and Proposition \ref{p2-1} together we could
obtain the following results concerning about the existence of $\nabla$-martingale on $N$ with fixed terminal value
in arbitrary time interval immediately.
\begin{cor}\label{c2-2}
Suppose $h\in C^1(\T^m;N)$ and $T>0$, then the following statements hold.
\begin{itemize}
\item [(1)] For a.e. $x\in \T^m$, there exists a $\nabla$-martingale $\{Y_t\}_{t\in [0,T]}$
with terminal value $Y_T=h(B_T+x)$.

\item [(2)] If $m=1$, then there exists a $\nabla$-martingale $\{Y_t\}_{t\in [0,T]}$
with terminal value $Y_T=h(B_T)$.
\end{itemize}
\end{cor}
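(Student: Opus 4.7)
The plan is to set $f\equiv 0$ (which trivially satisfies Assumption \ref{a2-1}, and hence $\bar f \equiv 0$ by \eqref{e2-5}) and then read the two parts of the corollary off Theorems \ref{t2-3} and \ref{t2-1} respectively, combined with Proposition \ref{p2-1}. Under $f=0$ the defining process \eqref{p2-1-1} simplifies to
\[
M_t^g = g(Y_t) - g(Y_0) - \frac{1}{2}\sum_{i=1}^m \int_0^t \Hess\, g(Y_s)(Z_s^i, Z_s^i)\, ds,
\]
and the crucial common step is to identify the integrand with $\Hess\, g(Y_s)(dY_s, dY_s)$. This follows once we read off from \eqref{d2-1-1} (respectively \eqref{d2-2-2}) that the martingale part of the continuous $\R^L$-valued semimartingale $Y$ (resp.\ $Y^x$) is $\sum_i \int_0^\cdot Z_s^i\, dB_s^i$, so its quadratic covariation is $d[Y^j, Y^k]_t = \sum_i Z_t^{i,j} Z_t^{i,k}\, dt$.

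For part (2) I would apply Theorem \ref{t2-3} with $f\equiv 0$ and terminal value $h(B_T)$ to produce a solution $(Y,Z)$ of \eqref{d2-1-1}. Proposition \ref{p2-1}(1) says that for every $g\in C^2(N;\R)$ the process $M_t^g$ above is a local martingale; combined with the quadratic-covariation identification just noted, $\{Y_t\}_{t\in[0,T]}$ is by definition a $\nabla$-martingale, and by construction $Y_T = h(B_T)$ a.s.

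For part (1) I would invoke Theorem \ref{t2-1} to obtain a solution $(Y,Z)\in \scr{S}\otimes \M^2(\T^m;N)$ of \eqref{d2-2-2} with $Y_T^x = h(B_T+x)$ for a.e.\ $x\in \T^m$, and then apply Proposition \ref{p2-1}(2). The one delicate bookkeeping point is that a \emph{single} Lebesgue-null set $\Xi\subset \T^m$ must control the exceptional $x$'s uniformly in $g\in C^2(N;\R)$; this is already built into the statement of Proposition \ref{p2-1}(2), and is obtained in the usual way by reducing to a countable $C^2$-dense family of test functions. For each $x\notin \Xi$, the quadratic-covariation identification of the first paragraph applied to $t\mapsto Y_t^x$ then shows that $\{Y_t^x\}_{t\in[0,T]}$ is a $\nabla$-martingale with terminal value $h(B_T+x)$, which is the required assertion.
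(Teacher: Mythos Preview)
Your proposal is correct and follows exactly the paper's approach: take $f\equiv 0$ and combine Theorem \ref{t2-3} (for part (2)), Theorem \ref{t2-1} (for part (1)), and Proposition \ref{p2-1}. The paper states this in a single sentence and leaves implicit the quadratic-covariation identification $d[Y^j,Y^k]_t=\sum_i Z_t^{i,j}Z_t^{i,k}\,dt$ that you spell out; your added details are accurate and do not deviate from the intended argument.
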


\section{The proof of Theorem \ref{t2-2} and Theorem \ref{t2-3}}\label{section3}
\begin{proof}[Proof of Theorem \ref{t2-2}]

By Definition \ref{d2-1}, in order to verify that $(Y,Z)$ is a solution of  $N$-valued BSDE \eqref{d2-1-1}, it remains
to prove that $Y_t\in N$ a.s. for every $t\in [0,T]$ and $Z_t^i\in T_{Y_t}N$ for $dt\times \P$-a.e. $(t,\omega)\in [0,T]\times \Omega$ and
every $1\le i\le m$.

Let $\delta_0$ be the positive constant introduced in subsection \ref{section2-1} such that
the nearest projection map $P_N:B(N,3\delta_0)\to N$ and square of distance function
${\rm dist}^2_N: B(N,3\delta_0)\to \R_+$ are smooth.  Choosing a truncation function
$\chi\in C_b^\infty(\R)$ satisfying that $\chi'\ge 0$ and
\begin{equation*}
\chi(s)=
\begin{cases}
& s,\ \ \ \ \ \ \ s\le \delta_0^2,\\
& 4\delta_0^2,\ \ \ \ s>4\delta_0^2.
\end{cases}
\end{equation*}
We define $G:\R^L \to \R_+$ by 
\begin{equation}\label{e3-0}
G(p):=\chi\Big({\rm dist}^2_N(p)\Big),\ p\in \R^L.
\end{equation}
By the choice of $\delta_0$ and $\chi$ we have $G(p)=4\delta_0^2$ for every $p\in \R^L$ with
${\rm dist}_N(p)>2\delta_0$.
Note that $G(p)={\rm dist}_N^2(p)=|p-P_N(p)|^2$ when $p\in B(N,\delta_0)$,
it holds that for every $p\in B(N,\delta_0)$, $u=(u_1,\cdots, u_L)\in \R^L$,
\begin{equation*}
\begin{split}
\bar \nabla^2 G(p)(u,u)=&2\sum_{k=1}^L\left(\sum_{i=1}^L u_i\left(\delta_{ik}-\frac{\partial P_N^k}{\partial p_i}(p)\right)\right)^2 -
2\sum_{i,j,k=1}^L(p_k-P_N^k(p))\frac{\partial^2 P_N^k}{\partial p_i \partial p_j}(p)u_iu_j\\
&\ge -2\sum_{i,j,k=1}^L (p_k-P_N^k(p))\frac{\partial^2 P_N^k}{\partial p_i \partial p_j}(p)u_iu_j,
\end{split}
\end{equation*}
where $\delta_{ij}$ denotes the Kronecker delta function
(i.e. $\delta_{ij}=0$ if $i\neq j$ and $\delta_{ij}=1$ when $i=j$), $P_N^k(p)$ means the $k$-th components of $P_N(p)$, thus
$P_N(p)=\left(P_N^1(p),\cdots P_N^L(p)\right)$.
According to definition of $\bar A$ in  \eqref{e2-3} we have
for every $p\in B(N,\delta_0)$, $u=(u_1,\cdots, u_L)\in \R^L$,
\begin{equation*}
\begin{split}
\left\langle \bar \nabla G(p), \bar A(p)(u,u)\right\rangle&=
2\sum_{i,j,k=1}^L (p_k-P_N^k(p))\frac{\partial^2 P_N^k}{\partial p_i \partial p_j}(P_N(p))u_iu_j\\
&-
2\sum_{i,j,k,l=1}^L (p_k-P_N^k(p))\frac{\partial P_N^k}{\partial p_l}(p)\frac{\partial^2 P_N^l}{\partial p_i \partial p_j}(P_N(p))u_iu_j\\
&=2\sum_{i,j,k=1}^L (p_k-P_N^k(p))\frac{\partial^2 P_N^k}{\partial p_i \partial p_j}(P_N(p))u_iu_j.
\end{split}
\end{equation*}
Here in the last step above we have used the following equality
$$\sum_{k=1}^L(p_k-P_N^k(p))\frac{\partial P_N^k}{\partial p_l}(p)=0,$$
which is due to the property $\frac{\partial P_N}{\partial p_l}(p)\in T_p N$ and $p-P_N(p)\in T_p^{\bot} N$.
Combing above estimates together yields that
\begin{equation*}
\begin{split}
&\quad\ \bar \nabla^2 G(p)(u,u)+\left\langle \bar \nabla G(p), \bar A(p)(u,u)\right\rangle\\
&\ge  2\sum_{i,j,k=1}^L (p_k-P_N^k(p))\left(\frac{\partial^2 P_N^k}{\partial p_i \partial p_j}(P_N(p))
-\frac{\partial^2 P_N^k}{\partial p_i \partial p_j}(p)\right)u_iu_j\\
&\ge -c_2{\rm dist}_N^2(p)|u|^2=-c_2G(p)|u|^2,\ p\in B(N,\delta_0),\ u=(u_1,\cdots, u_L)\in \R^L.
\end{split}
\end{equation*}
Meanwhile for every $p \in B(N,\delta_0)$ and $u=(u_1,\cdots, u_L)\in \R^L$ we have
\begin{equation*}
\begin{split}
&\quad \left\langle \bar \nabla G(p), \bar f(p,u)\right\rangle\\
&=
2\sum_{k=1}^L (p_k-P_N^k(p))\bar f^k(p,u)-2\sum_{k,l=1}^L (p_k-P_N^k(p))\frac{\partial P_N^k(p)}{\partial p_l}(p)
\bar f^l(p,u)=0,
\end{split}
\end{equation*}
where the last step is due to the fact that $\frac{\partial P_N}{\partial p_l}(p)\in T_p N$, $\bar f(p,u)\in T_p N$ and
$p-P_N(p)\in T_p^{\bot} N$.

By all these estimates we arrive at
\begin{equation*}
\bar \nabla^2 G(p)(u,u)+\left\langle \bar \nabla G(p), \bar A(p)(u,u)-\bar f(p,u)\right\rangle
\ge -c_2G(p)|u|^2,\ p\in B(N,\delta_0),\ u\in \R^L.
\end{equation*}
Still by the definition of $G$, $\bar A$ and $\bar f$ we know that for every $p\in \R^L/B(N,\delta_0)$ and $u\in \R^L$,
\begin{align*}
&\bar \nabla^2 G(p)(u,u)+\left\langle \bar \nabla G(p), \bar A(p)(u,u)-\bar f(p,u)\right\rangle
\ge -c_3(1+|u|^2)\ge -c_4G(p)(1+|u|^2),
\end{align*}
where in the second inequality above we have used the fact that $G(p)\ge \delta_0^2$ for every $p\in \R^L/B(N,\delta_0)$.

Combing above  two estimates yields that
\begin{equation}\label{t2-2-4}
\bar \nabla^2 G(p)(u,u)+\left\langle \bar \nabla G(p), \bar A(p)(u,u)-\bar f(p,u)\right\rangle
\ge -c_5G(p)(1+|u|^2),\ \forall\ p,u\in \R^L.
\end{equation}
Hence by \eqref{d2-1-1}, \eqref{t2-2-4} and applying It\^o's formula we get
for every $t\in [0,T]$,
\begin{align*}
0=G(\xi)&=G(Y_t)+\sum_{i=1}^m\int_t^{T}\langle \bar \nabla G(Y_s), Z_s^i \rangle dB_s^i\\
&+
\sum_{i=1}^m\int_t^{T}\frac{1}{2}\Big(\bar \nabla^2 G(Y_s)(Z_s^i,Z_s^i)
+\left\langle \bar \nabla G(Y_s), \bar A(Y_s)(Z_s^i,Z_s^i)-2\bar f(Y_s,Z_s)\right\rangle\Big)ds\\
&\ge G(Y_t)+ \sum_{i=1}^m\int_t^{T}\langle \bar \nabla G(Y_s), Z_s^i \rangle dB_s^i-
\frac{c_5}{2}\int_t^{T}G(Y_s)(1+|Z_s|^2)ds\\
&\ge G(Y_t)+ \sum_{i=1}^m\int_t^{T}\langle \bar \nabla G(Y_s), Z_s^i \rangle dB_s^i
-c_6\int_t^{T}G(Y_s)ds
\end{align*}
Here we have applied \eqref{t2-2-2} and the fact that $G(\xi)=0$ a.s. (since $\xi \in N$ a.s.).
Taking the expectation in above inequality we arrive at
\begin{align*}
\E[G(Y_t)]\le c_6\int_t^{T}\E[G(Y_s)]ds,\ \forall\ t\in [0,T].
\end{align*}
So by Grownwall's inequality we obtain $\E[G(Y_t)]=0$ which implies $G(Y_t)=0$ and $Y_t\in N$ a.s. for
every $t\in [0,T]$.

As explained in the proof of \cite[Theorem 3.1]{KLT} (which is due to the original idea in \cite{PP2}), it holds that $Y_t\in \mathscr{D}^{1,2}(N)$ and we can find an equivalent version of $Z_t^i$ and $\D Y_t(\omega)(t)$ such that
\begin{align*}
Z_t^i(\omega)=\D Y_t(\omega)(t)\cdot e_i,\ \ dt\times \P-{\rm a.e.}\ (t,\omega)\in [0,T]\times \Omega,\ 1\le i \le m,
\end{align*}
where $e_i=(0,\cdots, \underbrace{1}_{i\
{\rm th}}, \cdots, 0)$, $1\le i \le m$
is the standard orthonormal basis of $\R^m$.

So according to \cite[Theorem 3.1]{Su} (concerning about characterization of $\mathscr{D}^{1,2}(\R^L)$), we know that
$Y_t$ is $\sigma(B_{\cdot})$ measurable and for every $t,r\in [0,T]$
\begin{align*}
\int_0^r \big(\D Y_t(s)\cdot e_i\big)ds=(\P)\lim_{\e \to 0}\frac{Y_t(B_{\cdot}+\e e_i^r(\cdot))-Y_t(B_{\cdot})}{\e},\ a.s.,
\end{align*}
where $(\P)\lim_{\e \to 0}$ denotes limit under the convergence in probability and
$e_i^r(t):=
(t\wedge r)e_i$. Based on this and the property that
$Y_t\in N$ a.s. we deduce that for every $t,r\in [0,T]$,
\begin{align*}
\int_0^r \big(\D Y_t (s)\cdot e_i\big)ds\in T_{Y_t}N,\ a.s..
\end{align*}
Therefore we can find a version of $Z_t^i$ such that
\begin{align*}
Z_t^i(\omega)=\D Y_t(\omega)(t)\cdot e_i\in T_{Y_t}N,\ \ dt\times \P-{\rm a.e.}\ (t,\omega)\in [0,T]\times \Omega,\ 1\le i \le m.
\end{align*}
Now we have proved the desired conclusion.
\end{proof}

\begin{proof} [Proof of Theorem \ref{t2-3}]
Now we assume that $m=1$.
In this proof we use the notation $\partial_x$, $\partial_{xx}^2$ to represent the first order and
second order derivative with respect
to $x\in \T^1$ respectively.

According to standard theory of quasi-linear parabolic equation (see e.g. \cite[Appendix A]{M} or \cite[Chapter V and VII]{LSU}),
there exists a $v\in C^1([0,T_1)\times \T^1;\R^L)\cap C^2((0,T_1)\times \T^1;\R^L)$ for some
(maximal time) $T_1>0$ which satisfies the following equation,
\begin{equation}\label{t2-2-4a}
\begin{cases}
& \partial_t v(t,x)-\frac{1}{2}\partial_{xx}^2 v(t,x)=-\frac{1}{2}\bar A\left(v(t,x)\right)\left(\partial_x v(t,x),\partial_x v(t,x)\right)
+\bar f\left(v(t,x),\partial_x v(t,x)\right),\\
& v(0,x)=h(x),\ \ \ \ t\in (0,T_1).
\end{cases}
\end{equation}
By the same arguments in the proof of Theorem \ref{t2-2} we will deduce that $v(t,\cdot)\in N$ for every $t\in [0,T_1)$. So we can replace
the terms $\bar A$, $\bar f$ by $A$ and $f$ in \eqref{t2-2-4a} respectively.
At the same time, by \eqref{t2-2-4a} we have for every $t\in (0,T_1)$,
\begin{align*}
\partial_t |\partial_x v|^2&=2\left\langle \partial_x\partial_t v, \partial_x v\right\rangle\\
&=2\left\langle \partial_x\left(\frac{1}{2}\partial^2_{xx} v-\frac{1}{2}A(v)\left(\partial_x v,\partial_x v\right)+f(v,\partial_x v)\right),\partial_x v\right\rangle\\
&=\left\langle \partial_{xxx}^3 v,\partial_x v\right\rangle-\left\langle \partial_x\left(A(v)
\left(\partial_x v,\partial_x v\right)\right),\partial_x v\right\rangle+
2\left\langle \partial_x\left(f(v,\partial_x v)\right),\partial_x v\right\rangle\\
&=:I_1+I_2+I_3.
\end{align*}
By direct computation we obtain
\begin{align*}
I_1&=\frac{1}{2}\partial_{xx}^2\left(|\partial_x v|^2\right)-|\partial_{xx}^2 v|^2.
\end{align*}
Since $\left\langle A(v)\left(\partial_x v,\partial_x v\right),\partial_x v\right\rangle=0$, we have
\begin{align*}
I_2&=-\partial_x \left(\left\langle A(v)\left(\partial_x v,\partial_x v\right),\partial_x v\right\rangle\right)+
\left\langle A(v)\left(\partial_x v,\partial_x v\right),\partial_{xx}^2 v\right\rangle\\
&=\left\langle A(v)\left(\partial_x v,\partial_x v\right),\partial_{xx}^2 v\right\rangle.
\end{align*}
Note that by \eqref{t2-2-4a} there is an orthogonal decomposition for $\partial_{xx}^2 v$ as follows
\begin{equation*}
\begin{split}
&\partial_{xx}^2 v=(\partial_{xx}^2 v)^T+(\partial_{xx}^2 v)^{\bot},\\
&(\partial_{xx}^2 v)^T:=2\partial_t v-2f(v,\partial_x v)\in T_v N,\\
&(\partial_{xx}^2 v)^{\bot}:=A(v)\left(\partial_x v,\partial_x v\right)\in T_v^{\bot} N.
\end{split}
\end{equation*}
So we obtain
\begin{align*}
I_1+I_2=\frac{1}{2}\partial_{xx}^2\left(|\partial_x v|^2\right)-|(\partial_{xx}^2v)^T|^2.
\end{align*}
By \eqref{e3-0a} we have
\begin{align*}
|I_3|&=2\left|\left\langle \nabla_{\partial_x v}\left(f(v,\partial_x v)\right),\partial_x v\right\rangle
\right|\\
&\le 2\left|\nabla_{\partial_x v}\left(f(v,\partial_x v)\right)\right||\partial_x v|\\
&\le 2\Big(\left|\nabla_p f(v,\partial_x v)\right||\partial_x v|+
\left|\nabla_u f(v,\partial_x v)\right||\nabla_{\partial_x v}\partial_x v |\Big)|\partial_x v|\\
&\le c_1\left(|\partial_x v|+|(\partial_{xx}^2 v)^{T}|\right)|\partial_x v|\le
|(\partial_{xx}^2 v)^{T}|^2+c_2|\partial_x v|^2.
\end{align*}
Here the fourth step above follows from the fact $\nabla_{\partial_x v}\partial_x v=
\big(\bar\nabla_{\partial_x v}\partial_x v\big)^T=(\partial_{xx}^2 v)^T$ and the last step is due to
Young's inequality.

Combing all above estimates together for $I_1$, $I_2$ and $I_3$ we arrive at
\begin{equation*}
\partial_t |\partial_x v|^2 \le \frac{1}{2}\partial_{xx}^2\left(|\partial_x v|^2\right)+c_2
|\partial_x v|^2,\ \forall\ t\in (0,T_1).
\end{equation*}
So for $e(t,x):=e^{-c_2t}|\partial_x v(t,x)|^2$ it holds,
\begin{equation*}
\partial_t e(t,x)\le \frac{1}{2}\partial_{xx}^2 e(t,x),\ \forall\ t\in (0,T_1).
\end{equation*}
Applying It\^o's formula to $e(t-s,B_s+x)$ directly we obtain
for every $\delta\in (0,T_1)$ and $t\in (\delta,T_1)$,
\begin{equation*}
\begin{split}
e(t,x)=e^{-c_2t}|\partial_x v(t,x)|^2
&\le \E[e(0,B_t+x)]=\int_{\T^1}\rho_{(0,x)}(t,y)|\partial_y h(y)|^2dy\\
&\le c_{3}\delta^{-1/2}\int_{\T^1}|\partial_y h(y)|^2dy,
\end{split}
\end{equation*}
where $\rho_{(0,x)}(t,y)$ is the heat kernel defined by \eqref{e3-3b} below.
This implies immediately that
\begin{equation}\label{t2-2-6}
\sup_{(t,x)\in [\delta,T_1)\times \T^1}|\partial_x v(t,x)|^2\le c_{3}e^{c_2T_1}
\delta^{-1/2}\int_{\T^1}|\partial_y h(y)|^2dy.
\end{equation}
So we have $\lim_{t \uparrow T_1}\sup_{x\in \T^1}|\partial_x v(t,x)|^2<\infty$,
hence by standard theory of quasi-linear parabolic equation, we could extend the solution
$v$ of \eqref{t2-2-4a} to time interval $(0,T_2]$ for some $T_2>T_1$. By the same arguments above
we can prove that \eqref{t2-2-6} holds with $T_1$ replaced by $T_2$. Therefore repeating this procedure again, we can extend
the solution $v$ of \eqref{t2-2-4a} to time interval $[0,T]$ for any $T>0$.

Then for any fixed $T>0$, suppose $v\in C^1([0,T]\times\T^1;N)\cap
C^2((0,T]\times \T^1;N)$ is the solution of \eqref{t2-2-4a} constructed above in time interval $[0,T]$.
We define $Y_t=v(T-t,B_t)$ and $Z_t:=\partial_x v(T-t,B_t)$ for $t\in [0,T]$, applying It\^o's formula directly we
can verify that $(Y,Z)$ is the unique  solution to $N$-valued BSDE \eqref{d2-1-1} which satisfies \eqref{t2-2-2} for some
$C_2>0$.

\end{proof}

\section{The proof of Theorem \ref{t2-1}}\label{section4}
In this section we will partly use the idea of \cite{CS,S2} (with some essential modification for the appearance of term
$\bar f $) to construct a solution to
$L^2(\T^m;N)$-valued BSDE \eqref{d2-2-2}.

Through this section, let $G:\R^L \to \R$ be defined by \eqref{e3-0} and we define $g:\R^L\to \R^L$ by
\begin{equation*}
\ g(p):=\bar \nabla G(p),\ \ \forall\ p\in \R^L.
\end{equation*}

For any $\e>0$, based on linear growth conditions \eqref{e3-1a} and  the fact $g\in C_b^\infty(\R^L;\R^L)$,
by standard theory of quasi-linear parabolic equation (see e.g. \cite[Chapter V and VII]{LSU},
or \cite[Appendix A]{M}),
there exists a unique solution $v_\e:[0,T]\times\T^m\to \R^L$
with $v_\e\in C^2((0,T]\times\T^m;\R^L)\cap
C^1([0,T]\times\T^m;\R^L)$ to following
equation
\begin{equation}\label{e3-1}
\begin{cases}
&\partial_t v_\e(t,x)-\frac{1}{2}\Delta_x v_\e(t,x)=-\frac{1}{2\e}g(v_\e(t,x))+\bar f(v_\e(t,x),\nabla_x v_\e(t,x)),\\
&v_\e(0,x)=h(x).
\end{cases}
\end{equation}

Inspired by \cite{CS,S2}, we are going to give several estimates for $v_\e$.
\begin{lem}\label{l3-1}
Suppose that $v_\e$ is the solution to \eqref{e3-1}, then for every $\e>0$, it holds that
\begin{equation}\label{l3-1-1}
\begin{split}
&\int_0^T \int_{\T^m} |\partial_t v_\e(t,x)|^2 dx dt+\sup_{t\in [0,T]}
\Big(\int_{\T^m}|\nabla_x v_\e(t,x)|^2 dx+\frac{1}{\e}\int_{\T^m} G(v_\e(t,x))dx\Big)\\
&\le e^{C_4T}\left(C_4T+\int_{\T^m}|\nabla_x h(x)|^2dx\right),
\end{split}
\end{equation}
where $C_4>0$ is a positive constant independent of $\e$ and $T$.
\end{lem}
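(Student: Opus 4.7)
The plan is the standard energy estimate obtained by testing the PDE against $\partial_t v_\e$; the key algebraic structure is that the penalization $\frac{1}{2\e}g(v_\e) = \frac{1}{2\e}\bar\nabla G(v_\e)$ is the gradient of the (non-negative) function $\frac{1}{2\e}G$, so testing against $\partial_t v_\e$ produces a time derivative of the potential $\frac{1}{2\e}\int_{\T^m} G(v_\e(t,x))\,dx$.

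First, working on $(\delta,T]$ where $v_\e\in C^2$ and then letting $\delta\to 0$, I would take the inner product of \eqref{e3-1} with $\partial_t v_\e$ and integrate over $\T^m$. Integration by parts on the torus yields
\begin{equation*}
-\frac{1}{2}\int_{\T^m}\langle \Delta_x v_\e,\partial_t v_\e\rangle\,dx
=\frac{1}{4}\frac{d}{dt}\int_{\T^m}|\nabla_x v_\e|^2\,dx,
\end{equation*}
while the penalty term becomes a total derivative:
\begin{equation*}
\frac{1}{2\e}\int_{\T^m}\langle g(v_\e),\partial_t v_\e\rangle\,dx
=\frac{1}{2\e}\frac{d}{dt}\int_{\T^m}G(v_\e)\,dx.
\end{equation*}

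Next, I would control the $\bar f$ term using the linear growth bound \eqref{e3-1a} for $\bar f$ together with Young's inequality:
\begin{equation*}
\Big|\int_{\T^m}\langle \bar f(v_\e,\nabla_x v_\e),\partial_t v_\e\rangle dx\Big|
\le \frac{1}{2}\int_{\T^m}|\partial_t v_\e|^2 dx + c(1+\int_{\T^m}|\nabla_x v_\e|^2 dx).
\end{equation*}
Combining these three identities gives, after absorbing the $\frac{1}{2}|\partial_t v_\e|^2$ term on the left,
\begin{equation*}
\frac{1}{2}\int_{\T^m}|\partial_t v_\e|^2\,dx
+\frac{d}{dt}\Psi_\e(t)
\le c\bigl(1+\Psi_\e(t)\bigr),
\qquad
\Psi_\e(t):=\frac{1}{4}\int_{\T^m}|\nabla_x v_\e|^2\,dx+\frac{1}{2\e}\int_{\T^m}G(v_\e)\,dx,
\end{equation*}
with $c$ depending only on $C_1$ and $\mathrm{vol}(\T^m)$, hence independent of $\e$ and $T$.

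Finally, I would use that the initial data $h(x)\in N$ implies $G(h(x))=0$, so $\Psi_\e(0)=\tfrac14\int_{\T^m}|\nabla_x h|^2\,dx$. Gronwall's inequality applied to $\Psi_\e$ then yields the uniform bound on $\sup_t \Psi_\e(t)$, and integrating the differential inequality in $t$ from $0$ to $T$ produces the $L^2_{t,x}$ bound on $\partial_t v_\e$; together these give \eqref{l3-1-1} after renaming constants. I do not anticipate a serious obstacle here: the only point requiring a touch of care is the justification of the integration by parts at $t=0$, which is handled by first running the argument on $[\delta,T]$ and letting $\delta\downarrow 0$, using the $C^1$ regularity of $v_\e$ up to $t=0$ together with dominated convergence.
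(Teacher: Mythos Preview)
Your proposal is correct and follows essentially the same approach as the paper: test \eqref{e3-1} against $\partial_t v_\e$, integrate by parts to turn the Laplacian term into $\tfrac14\frac{d}{dt}\|\nabla_x v_\e\|_{L^2}^2$, recognize the penalty term as $\tfrac{1}{2\e}\frac{d}{dt}\int G(v_\e)$, absorb the $\bar f$ contribution via Young's inequality and \eqref{e3-1a}, and conclude with Gronwall after using $G(h)=0$. The only cosmetic difference is that the paper writes the argument in integrated form on $[0,s]$ rather than as a differential inequality, but the content is identical.
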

\begin{proof}
We multiple both side of \eqref{e3-1} with $\partial_t v_\e$ to obtain
that for every $s\in [0,T]$,
\begin{align*}
\int_0^s \int_{\T^m}|\partial_t v_\e(t,x)|^2 dxdt=&\frac{1}{2}
\int_0^s \int_{\T^m} \langle \partial_t v_\e(t,x),\Delta_x v_\e(t,x)\rangle dxdt\\
&-\frac{1}{2\e}\int_0^s \int_{\T^m} \langle \bar \nabla G(v_\e(t,x)), \partial_t v_\e(t,x)\rangle dxdt\\
&+ \int_0^s \int_{\T^m}\langle \bar f\big(v_\e(t,x),\nabla v_\e(t,x)\big), \partial_t v_\e(t,x)\rangle dx dt\\
&=:I_1^\e+I_2^\e+I_3^\e.
\end{align*}
Since  $v_\e\in C^2((0,T]\times\T^m;\R^L)\cap
C^1([0,T]\times\T^m;\R^L)$, we obtain
\begin{align*}
I_1^\e &=-\frac{1}{2}\int_0^s \int_{\T^m}\langle \partial_t \nabla_x v_\e(t,x), \nabla_x v_\e(t,x)\rangle dx dt\\
&=-\frac{1}{4}\int_0^s \partial_t \Big(\int_{\T^m}|\nabla_x v_\e(t,x) |^2dx\Big)dt\\
&=\frac{1}{4}\int_{\T^m}|\nabla_x h(x)|^2dx-\frac{1}{4}\int_{\T^m}|\nabla_x v_\e(s,x)|^2dx.
\end{align*}
Note that $\langle\bar \nabla G(v_\e(t,x)), \partial_t v_\e(t,x)\rangle=\partial_t\big(G(v_\e(t,x))\big)$, it holds
\begin{align*}
I_2^\e&=-\frac{1}{2\e}\int_0^s \partial_t\Big(\int_{\T^m} G(v_\e(t,x))dx\Big)dt\\
&=-\frac{1}{2\e}\Big(\int_{\T^m}G(v_\e(s,x))dx-\int_{\T^m}G(h(x))dx\Big)=-
\frac{1}{2\e}\int_{\T^m}G(v_\e(s,x))dx.
\end{align*}
Here the last equality is due to the fact that $G(p)=0$ for every $p\in N$ and $h(x)\in N$ for a.e. $x\in \T^m$.
Meanwhile by \eqref{e3-1a} and Young inequality we have for every $s\in [0,T]$,
\begin{align*}
|I_3^\e|&\le \int_0^s \int_{\T^m}\Big(\frac{1}{2}|\partial_t v_\e(t,x)|^2+8|\bar f\big(v_\e(t,x),\nabla_x v_\e(t,x)\big)|^2\Big)dxdt \\
&\le \frac{1}{2}\int_0^s \int_{\T^m} |\partial_t v_\e(t,x)|^2dxdt +c_1\int_0^s\int_{\T^m}\big(1+|\nabla_x v_\e(t,x)|^2\big)dxdt\\
&\le \frac{1}{2}\int_0^s \int_{\T^m} |\partial_t v_\e(t,x)|^2dxdt+c_1
\int_0^s\int_{\T^m}|\nabla_x v_\e(t,x)|^2 dxdt+c_2T,
\end{align*}
where the positive constants $c_1,c_2$ are independent of $\e$. Therefore combing all above estimates
together yields that for every $s\in [0,T]$
\begin{align*}
&\int_0^s\int_{\T^m}|\partial_t v_\e(t,x)|^2dxdt+
\Big(\frac{1}{2}\int_{\T^m}|\nabla_x v_\e(s,x)|^2 dx+\frac{1}{\e}\int_{\T^m} G(v_\e(s,x))dx\Big) \\
&\le \frac{1}{2}\int_{\T^m}|\nabla_x h(x)|^2dx+2c_2T+2c_1\int_0^s \int_{\T^m} |\nabla_x v_\e(t,x)|^2 dxdt.
\end{align*}
Hence applying Grownwall lemma we can prove \eqref{l3-1-1}.

\end{proof}

Given a point $z_0=(t_0,x_0)\in [0,T]\times \T^m$, we define
\begin{equation}\label{e3-3a}
\begin{split}
& Q_R(z_0):=\{z=(t,x)\in [0,T]\times \T^m; x\in B_{\T^m}(x_0,R), |t-t_0|<R^2\},\ R\in (0,1/2),\\
& T_R(z_0):=\{z=(t,x)\in [0,T]\times \T^m; t_0-4R^2<t<t_0-R^2\},\ \ 0<R<\frac{\sqrt{t_0}}{2}.
\end{split}
\end{equation}
Also for any  $z_0=(t_0,x_0)\in [0,T]\times \T^m$, $0<R<\min(1/2,\sqrt{t_0}/2)$, let
\begin{equation}\label{e3-3b}
\rho_{z_0}(t,x):=\frac{1}{(2\pi|t_0-t|)^{m/2}}\exp\Big(-\frac{|x-x_0|^2}{2|t_0-t|)}\Big),\ t\in [0,T],\ x\in \T^m,
\end{equation}
\begin{equation}\label{e3-3c}
\begin{split}
& \Phi_\e(R):=R^2\int_{\T^m} \Big(\frac{1}{2}|\nabla_x v_\e(t_0-R^2/2,x)|^2+\frac{1}{\e}G(v_\e(t_0-R^2/2,x))\Big)
\rho_{z_0}(t_0-R^2/2,x)\varphi_{x_0}^2(x)dx,\\
&\ \ \ \ \ \ \ \ \ =R^2\int_{\R^m} \Big(\frac{1}{2}|\nabla_x v_\e(t_0-R^2/2,x)|^2+\frac{1}{\e}G(v_\e(t_0-R^2/2,x))\Big)
\rho_{z_0}(t_0-R^2/2,x)\varphi_{x_0}^2(x)dx,
\end{split}
\end{equation}
\begin{equation}\label{e3-3}
\begin{split}
& \Psi_\e(R):=\iint_{T_R(z_0)}\Big(\frac{1}{2}|\nabla_x v_\e(t,x)|^2+\frac{1}{\e}G(v_\e(t,x))\Big)
\rho_{z_0}(t,x)\varphi_{x_0}^2(x)dxdt\\
&\ \ \ \ \ \ \ \ =\int_{t_0-4R^2}^{t_0-R^2}\int_{\T^m}\Big(\frac{1}{2}|\nabla_x v_\e(t,x)|^2+\frac{1}{\e}G(v_\e(t,x))\Big)
\rho_{z_0}(t,x)\varphi_{x_0}^2(x)dxdt\\
&\ \ \ \ \ \ \ \ =\int_{t_0-4R^2}^{t_0-R^2}\int_{\R^m}\Big(\frac{1}{2}|\nabla_x v_\e(t,x)|^2+\frac{1}{\e}G(v_\e(t,x))\Big)
\rho_{z_0}(t,x)\varphi_{x_0}^2(x)dxdt.
\end{split}
\end{equation}
Here $\varphi_{x_0}\in C^\infty(\T^m;\R)$ is a cut-off function which satisfies that $\varphi_{x_0}(x)=1$ for every $x\in B_{\T^m}(x_0,1/4)$
, $\varphi_{x_0}(x)=0$ for every $x\in \T^m/B_{\T^m}(x_0,1/2)$ and
$\sup_{x_0\in \T^m}\|\varphi_{x_0}\|_\infty+\|\nabla_x \varphi_{x_0}\|_\infty<\infty$, and in the last equality of
\eqref{e3-3c} and \eqref{e3-3} we extend $\varphi_{x_0}$ to
a function defined on $\R^m$ with compact supports.
\begin{lem}\label{l3-2}
For any fixed $z_0=(t_0,x_0)\in [0,T]\times \T^m$, let $\Phi_\e(R)$, $\Psi_\e(R)$ be the functions
defined by \eqref{e3-3}, then for every $0<R\le R_0\le \min(1/2,\sqrt{t_0}/2)$,
\begin{equation}\label{l3-2-1}
\Phi_\e(R)\le e^{C_5(R_0-R)}\Phi_\e(R_0)+C_5(R_0-R),
\end{equation}
\begin{equation}\label{l3-2-2}
\Psi_\e(R)\le e^{C_5(R_0-R)}\Psi_\e(R_0)+C_5(R_0-R),
\end{equation}
where $C_5$ is a positive constant independent of $\e$ and $z_0=(t_0,x_0)$.
\end{lem}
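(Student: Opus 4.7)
The strategy is to carry out a Struwe-type monotonicity computation as in \cite{CS,S2}, with the new ingredient being to control the non-gradient drift $\bar f$ appearing in \eqref{e3-1} as a lower-order perturbation. Write $e_\e(t,x) := \frac{1}{2}|\nabla_x v_\e(t,x)|^2 + \frac{1}{\e}G(v_\e(t,x))$ for the penalized energy density, so that $\Phi_\e(R)$ is $R^2$ times the $\rho_{z_0}\varphi_{x_0}^2$-weighted integral of $e_\e$ on the slice $\{t=t_0-R^2/2\}$, and $\Psi_\e(R)$ is the analogous space-time integral over $T_R(z_0)$.

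First I would differentiate $\Phi_\e$ in $R$. The time-derivative of $e_\e$ can be rewritten via \eqref{e3-1}, since $\langle g(v_\e),\partial_t v_\e\rangle = \partial_t G(v_\e)$ and $\langle \nabla_x\partial_t v_\e,\nabla_x v_\e\rangle = \frac{1}{2}\partial_t|\nabla_x v_\e|^2$. After integrating by parts against $\rho_{z_0}\varphi_{x_0}^2$ and using the crucial backward heat equation $(\partial_t + \frac{1}{2}\Delta_x)\rho_{z_0}=0$ for $t<t_0$, the classical Struwe identity produces a non-negative ``square'' term of the form
\begin{equation*}
R^2\int_{\R^m}\Bigl|\partial_t v_\e(t,x) + \nabla_x v_\e(t,x)\cdot \frac{x-x_0}{2(t_0-t)}\Bigr|^2 \rho_{z_0}(t,x)\varphi_{x_0}^2(x)\,dx
\end{equation*}
plus error terms coming from $\nabla_x\varphi_{x_0}$ and from the $\bar f$ contribution.

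The cut-off errors involving $\nabla_x\varphi_{x_0}$ are supported on $\{|x-x_0|\ge 1/4\}$, where Gaussian decay of $\rho_{z_0}$ bounds them uniformly in $\e$, exactly as in \cite{S2}. The inhomogeneity yields an additional term of shape $R^2\int \langle \bar f(v_\e,\nabla_x v_\e),\partial_t v_\e\rangle \rho_{z_0}\varphi_{x_0}^2\,dx$; using the linear growth bound \eqref{e3-1a} and Young's inequality it is dominated by a small multiple of the square term above (absorbed) plus $C(R^2 + \Phi_\e(R))$. Assembling these estimates gives a differential inequality of the form $R\frac{d}{dR}\Phi_\e(R) \ge -C(\Phi_\e(R)+R)$ on $(0,R_0]$, and Gr\"onwall's lemma then delivers \eqref{l3-2-1}.

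For \eqref{l3-2-2} the same scheme applies with an extra integration in $t$: one may either differentiate $\Psi_\e$ directly in $R$ (the boundary contributions at $t_0-R^2$ and $t_0-4R^2$ are controlled by Lemma \ref{l3-1}) and repeat the weighted integration-by-parts argument, or integrate the pointwise slice-monotonicity of \eqref{l3-2-1} over $(t_0-4R^2,t_0-R^2)$ and apply Gr\"onwall again. The main obstacle throughout is the $\bar f$ term: absent in the pure harmonic-map heat flow setting of \cite{CS,S2}, it prevents exact monotonicity and is precisely what forces the correction factor $e^{C_5(R_0-R)}$ and the linear additive error $C_5(R_0-R)$ in the statement. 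Everything else---the weighted integration by parts, the backward heat equation, and the Gaussian cut-off estimate---is a direct adaptation of the template in \cite{CS,S2}.
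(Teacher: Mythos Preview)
Your overall strategy---Struwe-type monotonicity with the $\bar f$ term treated as a lower-order perturbation absorbed by Young's inequality against the positive ``square'' term, and cut-off errors controlled by Gaussian decay together with Lemma~\ref{l3-1}---is exactly what the paper does. The implementation differs: the paper rescales, setting $v_\e^R(t,x):=v_\e(t_0-R^2t,x_0+Rx)$, so that $\Psi_\e(R)$ becomes an integral over the fixed slab $[1,4]\times\R^m$ with all $R$-dependence pushed into the integrand. Differentiating in $R$ then produces no boundary terms from the time limits, and one arrives cleanly at
\[
\frac{d}{dR}\Psi_\e(R)\ \ge\ -C\,\Psi_\e(R)-C,
\]
from which \eqref{l3-2-2} follows by Gr\"onwall; the same rescaling handles $\Phi_\e$.

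Two points in your write-up need correction. First, the differential inequality you state, $R\frac{d}{dR}\Phi_\e(R)\ge -C(\Phi_\e(R)+R)$, is too weak: dividing by $R$ gives $\frac{d}{dR}\Phi_\e\ge -\frac{C}{R}\Phi_\e-C$, and Gr\"onwall then yields only an $(R_0/R)^C$ factor rather than the claimed $e^{C(R_0-R)}$. A correct computation (most transparently via the rescaling) gives $\frac{d}{dR}\Phi_\e\ge -C\Phi_\e-C$ with no $1/R$ singularity. Second, for $\Psi_\e$ your ``direct'' option is problematic: since the integrand in the unscaled form has no $R$-dependence, $\frac{d}{dR}\Psi_\e$ reduces to boundary terms at $t_0-R^2$ and $t_0-4R^2$, which are \emph{not} simply controlled by Lemma~\ref{l3-1} but must themselves be related back to $\Psi_\e$; and your alternative of ``integrating the slice monotonicity'' does not line up cleanly because $\Phi_\e$ is defined on the slice $t_0-R^2/2$ rather than on the endpoints of $T_R(z_0)$. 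The rescaling approach sidesteps both issues.
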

\begin{proof}
In the proof, all the constants $c_i$ are independent of $\e$, $z_0$ and $R$. For every $1<t<4$ and $0<R\le R_0\le \min(1/2,\sqrt{t_0}/2)$, set
$v_\e^R(t,x):=v_\e(t_0-R^2 t,x_0+Rx)$. By \eqref{e3-1} we have immediately that
\begin{equation}\label{l3-2-3}
\partial_t v_\e^R(t,x)+\frac{1}{2}\Delta_x v_\e^R(t,x)=\frac{R^2}{2\e}g(v_\e^R(t,x))-\bar f^R(v_\e^R(t,x),\nabla_x v_\e^R(t,x)),
\end{equation}
where $\bar f^R:\R^L\times \R^{mL}\to \R^L$ is defined by $\bar f^R(p,u)=R^2\bar f(p,R^{-1}u)$.

Also note that $\rho_{z_0}(t_0-R^2 t,x_0+Rx)=R^{-m}\rho_{(0,0)}(t,x)$, applying integration by parts formula we obtain
\begin{align*}
\quad \Psi_\e(R)&=R^{2+m}\int_1^4 \int_{\R^m}
\Big(\frac{1}{2}|\nabla_x v_\e(t_0-R^2 t, x_0+Rx)|^2+\frac{1}{\e}G(v_\e(t_0-R^2 t, x_0+Rx))\Big)\\
&\ \ \times\rho_{z_0}(t_0-R^2 t,x_0+Rx)\varphi_{x_0}^2(x_0+Rx)dxdt\\
&=\int_1^4 \int_{\R^m}\frac{1}{2}|\nabla_x v_\e^R(t,x)|^2
\rho_{(0,0)}(t,x)\varphi_{x_0}^2(x_0+Rx)dxdt\\
&+\int_1^4 \int_{\R^m}\frac{R^2}{\e}G(v_\e^R(t, x))\rho_{(0,0)}(t,x)\varphi_{x_0}^2(x_0+Rx)dxdt\\
&=:I_1^{\e,R}+I_2^{\e,R}.
\end{align*}

Meanwhile according to integration by parts formula we have,
\begin{equation}\label{l3-2-4}
\begin{split}
\frac{\partial}{\partial R}I_1^{\e,R}
&=-\int_1^4 \int_{\R^m}\left\langle \Delta_x v_\e^R(t,x), \frac{\partial v_\e^R(t,x)}{\partial R}\right\rangle
\rho_{(0,0)}(t,x)\varphi_{x_0}^2(x_0+Rx)dxdt\\
&-\int_1^4 \int_{\R^m}\left\langle \nabla_x v_\e^R(t,x)\cdot \nabla_x\big(\rho_{(0,0)}(t,x)\varphi_{x_0}^2(x_0+Rx)\big),
\frac{\partial v_\e^R(t,x)}{\partial R}\right\rangle dxdt\\
&+\int_1^4 \int_{\R^m}|\nabla_x v_\e^R(t,x)|^2
\varphi_{x_0}(x_0+Rx)\rho_{(0,0)}(t,x)\big(\nabla_x\varphi_{x_0}(x_0+Rx)\cdot x\big) dxdt.
\end{split}
\end{equation}
Note that
\begin{align*}
\frac{\partial }{\partial R}v_\e^R(t,x)&=-2tR\partial_t v_\e(t_0-tR^2,x_0+Rx)+
\nabla_x v_\e(t_0-tR^2,x_0+Rx)\cdot x\\
&=\frac{1}{R}\Big(2t\partial_t v_\e^R(t,x)+\nabla_x v_\e^R(t,x)\cdot x\Big),
\end{align*}
and $\nabla_x \rho_{(0,0)}(t,x)=-\frac{x}{t}\rho_{(0,0)}(t,x)$, putting these estimates into \eqref{l3-2-4}
we arrive at
\begin{equation*}
\begin{split}
&\ \ \ \ \frac{\partial}{\partial R}I_1^{\e,R}\\
&=-\int_1^4\int_{\R^m}\frac{1}{R}\Big\langle  \Delta_x v_\e^R(t,x)-\frac{x}{t}\cdot\nabla_x v_\e^R(t,x),
2t\partial_t v_\e^R(t,x)+\nabla_x v_\e^R(t,x)\cdot x \Big\rangle \Theta_R(t,x) dxdt\\
&-2\int_1^4\int_{\R^m}\Big\langle \nabla_x v_\e^R(t,x)\cdot \nabla_x\varphi_{x_0}(x_0+Rx),
2t\partial_t v_\e^R(t,x)+\nabla_x v_\e^R(t,x)\cdot x \Big\rangle \Lambda_R(t,x)dxdt\\
&+\int_1^4 \int_{\R^m}|\nabla_x v_\e^R(t,x)|^2 \Lambda_R(t,x)
\big(\nabla_x\varphi_{x_0}(x_0+Rx)\cdot x\big) dxdt,
\end{split}
\end{equation*}
where $\Theta_R(t,x):=\rho_{(0,0)}(t,x)\varphi_{x_0}^2(x_0+Rx)$, $\Lambda_R(t,x):=\rho_{(0,0)}(t,x)\varphi_{x_0}(x_0+Rx)$.
By the same way we obtain
\begin{align*}
\frac{\partial}{\partial R}I_2^{\e,R}&=\int_1^4\int_{\R^m}\frac{1}{R}\Big\langle \frac{R^2}{\e}g\big(v_\e^R(t,x)\big),
2t\partial_t v_\e^R(t,x)+\nabla_x v_\e^R(t,x)\cdot x \Big\rangle \Theta_R(t,x) dxdt\\
&+2\int_1^4 \int_{\R^m}\frac{R^2}{\e}G\big(v_\e^R(t,x)\big)
\Lambda_R(t,x)\big(\nabla_x\varphi_{x_0}(x_0+Rx)\cdot x\big) dxdt\\
&+\int_1^4 \int_{\R^m}\frac{2R}{\e}G\big(v_\e^R(t,x)\big)\Theta_R(t,x) dxdt.
\end{align*}
Combing all above estimates for $\frac{\partial}{\partial R}I_1^{\e,R}$, $\frac{\partial}{\partial R}I_2^{\e,R}$
together and applying \eqref{l3-2-3} yields that
\begin{equation*}
\begin{split}
\frac{\partial}{\partial R}\Psi_\e(R)&=
\int_1^4\int_{\R^m}\frac{1}{tR}\Big|2t\partial_t v_\e^R(t,x)+\nabla_x v_\e^R(t,x)\cdot x \Big|^2\Theta_R(t,x)dxdt\\
&+\int_1^4 \int_{\R^m}\frac{2}{R}\Big\langle\bar f^R\big(v_\e^R(t,x),\nabla_x v_\e^R(t,x)\big),
2t\partial_t v_\e^R(t,x)+\nabla_x v_\e^R(t,x)\cdot x \Big\rangle \Theta_R(t,x)dxdt\\
&-2\int_1^4\int_{\R^m}\Big\langle \nabla_x v_\e^R(t,x)\cdot \nabla_x\varphi_{x_0}(x_0+Rx),
2t\partial_t v_\e^R(t,x)+\nabla_x v_\e^R(t,x)\cdot x \Big\rangle \Lambda_R(t,x)dxdt\\
&+\int_1^4 \int_{\R^m}\Big(|\nabla_x v_\e^R(t,x)|^2+\frac{2R^2}{\e}G\big(v_\e^R(t,x)\big)\Big)
\Lambda_R(t,x)\big(\nabla_x\varphi_{x_0}(x_0+Rx)\cdot x\big) dxdt\\
&+\int_1^4 \int_{\R^m}\frac{2R}{\e}G\big(v_\e^R(t,x)\big)\Theta_R(t,x) dxdt\\
&=:\sum_{i=1}^5J_{i}^{\e}(R)\ge \sum_{i=1}^4 J_{i}^{\e}(R) .
\end{split}
\end{equation*}
Since $|\bar f^R(p,u)|=R^2|\bar f(p,R^{-1}u)|\le c_1R(R+|u|)$, according to Young's inequality we obtain
\begin{align*}
|J_2^\e(R)|&\le \frac{1}{4}J_1^\e(R)+c_2\int_1^4\int_{\R^m}t(R^3+R|\nabla_x v_\e^R(t,x)|^2)\Theta_R(t,x)dxdt\\
&\le \frac{1}{4}J_1^\e(R)+c_3+c_4\int_1^4\int_{\R^m}R|\nabla_x v_\e^R(t,x)|^2\Theta_R(t,x)dxdt\\
&= \frac{1}{4}J_1^\e(R)+c_3+c_4\int_{t_0-4R^2}^{t_0-R^2}\int_{\R^m}R|\nabla_x v_\e(t,x)|^2\rho_{z_0}(t,x)\varphi_{x_0}^2(x)dxdt\\
&\le \frac{1}{4}J_1^\e(R)+c_4\Psi_\e(R)+c_3,
\end{align*}
where in the second inequality above we have applied the property
\begin{equation*}
\begin{split}
\int_1^4 \int_{\R^m} R^3 \Theta_R(t,x)dxdt&\le
R^3\|\varphi_{x_0}\|_\infty^2\int_1^4 \int_{\R^m}\rho_{(0,0)}(t,x)dxdt\le c_5R^3\le c_5\left(\frac{1}{2}\right)^3.
\end{split}
\end{equation*}

Still applying Young's inequality we get
\begin{align*}
&\quad |J_3^\e(R)|\\
&\le \frac{1}{4}J_1^\e(R)+c_6\int_1^4 \int_{\R^m}tR\Big|\nabla_x v_\e^R(t,x)\cdot \nabla_x\varphi_{x_0}(x_0+Rx)\Big|^2
\rho_{(0,0)}(t,x)dxdt\\
&= \frac{1}{4}J_1^\e(R)+c_6\int_1^4 \int_{B(x_0,1/2)/B(x_0,1/4)}tR^{3-m}\Big|\nabla_x v_\e(t_0-tR^2,x)\cdot \nabla_x\varphi_{x_0}(x)\Big|^2
\rho_{(0,0)}\big(t,\frac{x-x_0}{R}\big) dxdt\\
&\le \frac{1}{4}J_1^\e(R)+c_7\sup_{t\in [0,T]}\int_{\T^m}|\nabla_x v_\e(t,x)|^2 dx\le
\frac{1}{4}J_1^\e(R)+c_8.
\end{align*}
Here the second step from the change of variable and the fact that
$\nabla_x \varphi_{x_0}(x)\neq 0$ only if $x\in B(x_0,1/2)/B(x_0,1/4)$
(note that we still denote
the extension of $\varphi_{x_0}$ to a function on $\R^m$ with compact support by $\varphi_{x_0}$) , in the third step we
have applied the property that
\begin{equation*}
\sup_{R\in (0,1/2),t\in [1,4]}\sup_{x\in B(x_0,1/2)/B(x_0,1/4)}
R^{3-m}\rho_{(0,0)}\big(t,\frac{x-x_0}{R}\big)\le c_9\sup_{R\in (0,1/2)}R^{3-m}e^{-\frac{1}{128R^2}}\le c_{10},
\end{equation*}
and
\begin{equation*}
\int_{B(x_0,1/2)}|\nabla_x v_\e(t,x)|^2 dx \le \int_{\T^m}|\nabla_x v_\e(t,x)|^2 dx,
\end{equation*}
the last step is due to \eqref{l3-1-1}.

Handling $J_4^\e(R)$ by the same way of that for $J_3^\e(R)$ we arrive at
\begin{align*}
|J_4^\e(R)|&\le c_{11}\sup_{t\in [0,T]}
\Big(\int_{\T^m}|\nabla_x v_\e(t,x)|^2 dx+\frac{1}{\e}\int_{\T^m} G\big(v_\e(t,x)\big)dx\Big)\le c_{12}.
\end{align*}

Combing all above estimates for $J_i^\e(R)$, $i=1,2,3,4$ together yields that
\begin{align*}
\frac{\partial}{\partial R}\Psi_\e(R)&\ge \frac{1}{2}J_1^\e(R)-c_4\Psi_\e(R)-c_{13},\\
&\ge -c_4\Psi_\e(R)-c_{13}, \ \forall\ 0<R\le R_0.
\end{align*}
Applying Grownwall's lemma we obtain \eqref{l3-2-2} immediately.

The proof for \eqref{l3-2-1} is similar with that for \eqref{l3-2-2}, so we omit the details here.
\end{proof}

\begin{lem}\label{l3-3}
Given a $\e\in (0,1)$ and $R>0$, suppose that $v_{\e,R}\in C^2((0,T]\times \T^m;\R^L)$ satisfies the following equation
\begin{equation}\label{l3-3-0}
\partial_t v_{\e,R}(t,x)-\frac{1}{2}\Delta_x v_{\e,R}(t,x)=-\frac{R^2}{2\e}g(v_{\e,R}(t,x))+\bar f^R(v_{\e,R}(t,x),\nabla_x v_{\e,R}(t,x)),
\end{equation}
where $\bar f^R(p,u)=R^2 \bar f(p,R^{-1}u)$.
Set $e(v_{\e,R})(t,x):=\frac{1}{2}|\nabla_x v_{\e,R}(t,x)|^2+\frac{R^2}{\e}G\big(v_{\e,R}(t,x)\big)$.
Then there exists a positive constant $C_6>0$ such that for every $\e\in (0,1)$ and $R>0$,
\begin{equation}\label{l3-3-1}
\partial_t e(v_{\e,R})-\frac{1}{2}\Delta_x e(v_{\e,R})\le C_6e(v_{\e,R})\big(R^2+e(v_{\e,R})\big),\ \ \ \forall (t,x)\in (0,T]\times \T^m.
\end{equation}
\end{lem}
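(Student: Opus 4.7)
The plan is to prove \eqref{l3-3-1} by a pointwise Bochner-type computation. Writing $v=v_{\e,R}$ and $g=\bar\nabla G$ for brevity, applying $\partial_t-\tfrac12\Delta_x$ to each summand of $e(v)=\tfrac12|\nabla_x v|^2+\tfrac{R^2}{\e}G(v)$ via the Bochner identity and the chain rule, and substituting the PDE \eqref{l3-3-0} (together with $\bar\nabla g=\bar\nabla^2G$), one obtains
\begin{align*}
(\partial_t-\tfrac12\Delta_x)e &= -\tfrac{R^2}{\e}\sum_{j=1}^m\bar\nabla^2G(v)(\partial_{x_j}v,\partial_{x_j}v) - \tfrac{R^4}{2\e^2}|g(v)|^2 - \tfrac12|\nabla_x^2 v|^2\\
&\quad + \tfrac{R^2}{\e}\langle g(v),\bar f^R(v,\nabla_x v)\rangle + \langle\nabla_x v,\nabla_x\bar f^R(v,\nabla_x v)\rangle.
\end{align*}

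The decisive structural observation is that the cross term $\tfrac{R^2}{\e}\langle g(v),\bar f^R\rangle$ vanishes identically. Indeed, on $B(N,2\delta_0)$ the chain rule and \eqref{e2-2a} show that $g(v)$ is a scalar multiple of $\bar\nabla\dist_N(v)\in T_{P_N(v)}^{\bot}N$, while by construction \eqref{e2-5} one has $\bar f(v,\cdot)\in T_{P_N(v)}N$; outside $B(N,2\delta_0)$ both $g$ and $\bar f^R$ vanish. For the $\nabla_x\bar f^R$ contribution, the scaling $\bar f^R(p,u)=R^2\bar f(p,R^{-1}u)$ together with \eqref{e3-1a} yields $|\bar\nabla_p\bar f^R|\le C(R^2+R|u|)$ and $|\bar\nabla_u\bar f^R|\le CR$; Young's inequality then gives $|\langle\nabla_x v,\nabla_x\bar f^R\rangle|\le\tfrac14|\nabla_x^2 v|^2+C(R^2e+e^2)$, and the Hessian contribution is absorbed into $-\tfrac12|\nabla_x^2 v|^2$.

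It remains to bound $-\tfrac{R^2}{\e}\sum_j\bar\nabla^2 G(v)(\partial_{x_j}v,\partial_{x_j}v) - \tfrac{R^4}{2\e^2}|g(v)|^2$ by a constant multiple of $e^2$, which I do by splitting the range of $v(t,x)$ into three regions. In $B(N,\delta_0)$, the tubular identities $G(p)=|p-P_N(p)|^2$ and $(p-P_N(p))\perp T_{P_N(p)}N$ yield $g(p)=2(p-P_N(p))$ and the crucial equality $|g|^2=4G$, while a second-order Taylor expansion of $G$ at $N$ (using the boundedness of $\bar\nabla^2 P_N$ on a compact tubular neighbourhood) gives $\bar\nabla^2 G(p)(u,u)\ge -c\sqrt{G(p)}\,|u|^2$. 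Hence AM-GM bounds the first term by $\tfrac{R^4G(v)}{\e^2}+C|\nabla_x v|^4$, and combining with $-\tfrac{R^4}{2\e^2}|g|^2=-\tfrac{2R^4G(v)}{\e^2}$ leaves only $Ce^2$. In the annulus $B(N,2\delta_0)\setminus B(N,\delta_0)$, the construction of $\chi$ forces $G(v)\ge\delta_0^2$, so $\tfrac{R^2}{\e}\le\tfrac{e}{\delta_0^2}$, and the boundedness of $\bar\nabla^2 G$ gives $\le Ce^2$; outside $B(N,2\delta_0)$ both $g$ and $\bar\nabla^2 G$ vanish. The main technical obstacle is the cancellation in $B(N,\delta_0)$: the crude estimate $|\bar\nabla^2 G|\le C$ would only produce $\tfrac{CR^2}{\e}e$, which fails the required form $R^2e+e^2$ for small $\e$, so one must use the refined Taylor estimate $\bar\nabla^2G\ge -c\sqrt{G}\,I$ and absorb the $\sqrt{G}$ factor against the good term $-\tfrac{R^4}{2\e^2}|g|^2$, an absorption that works precisely because the tubular identity $|g|^2=4G$ supplies exactly the right coefficient.
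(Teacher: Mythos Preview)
Your proposal is correct and follows essentially the same strategy as the paper. Both proofs rest on the same three ingredients: the orthogonality $\langle g(v),\bar f^R\rangle=0$ (via $g(v)\in T_{P_N(v)}^{\bot}N$ and $\bar f\in T_{P_N(v)}N$), the refined Hessian bound $\bar\nabla^2(\dist_N^2)(p)(u,u)\ge -c\,\dist_N(p)\,|u|^2$ near $N$ together with the identity $|g|^2=4|\chi'|^2\dist_N^2$, and the trivial bound $G\ge\delta_0^2$ away from the $\delta_0$-tube. The only difference is organizational: you partition explicitly into the three geometric regions $B(N,\delta_0)$, the annulus, and the exterior, whereas the paper keeps the cutoff $\chi$ in the formulas and lets the support properties of $\chi'$ and $\chi''$ effect the same separation (the $\chi'$-contribution is absorbed by Young's inequality into the good term $I_1^{\e,R}=-\tfrac{2R^4}{\e^2}|\chi'|^2\dist_N^2$, while the $\chi''$-contribution lives only where $\dist_N\ge\delta_0$ and is controlled by $G^2$). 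Your explicit region-splitting is arguably cleaner to read; the paper's $\chi$-calculus has the minor advantage of handling the transition zone and the inner tube in one stroke.
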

\begin{proof}
By \eqref{e2-2a} (see e.g. \cite[Section III.6]{C}) we know that
\begin{equation}\label{l3-3-2}
\bar \nabla {\rm dist}_N(p)\in T_{P_N(p)}^{\bot}N,\ \forall\ p\in B(N,3\delta_0),
\end{equation}
\begin{equation}\label{l3-3-3}
\Big|\bar \nabla \big({\rm dist}^2_N\big)(p)\Big|^2=4{\rm dist}^2_N(p),\ \ \ \forall\ p\in B(N,3\delta_0).
\end{equation}
Note that $G(u)=\chi\left({\rm dist}^2_N(p)\right)$, by direct computation we have
\begin{align*}
&\quad \frac{R^2}{\e}\left(\partial_t-\frac{1}{2}\Delta_x\right)G\left(v_{\e,R}\right)\\
&=\frac{R^2}{\e}\chi'\left({\rm dist}^2_N(v_{\e,R})\right)\left\langle \bar \nabla
 \left({\rm dist}^2_N\right)(v_{\e,R}), (\partial_t-\frac{1}{2}\Delta_x)v_{\e,R}\right\rangle\\
 &-\frac{R^2}{2\e}\left\langle \nabla_x \left(\chi'\left({\rm dist}^2_N(v_{\e,R})\right)\bar \nabla
 \left({\rm dist}^2_N\right)(v_{\e,R})\right)\cdot \nabla_x v_{\e,R}\right\rangle\\
 &=:I_1^{\e,R}+I_2^{\e,R}
\end{align*}
and
\begin{align*}
 \left(\partial_t-\frac{1}{2}\Delta_x\right)\frac{1}{2}\left|\nabla_x v_{\e,R} \right|^2
&=\left\langle \nabla_x \left(\partial_t v_{\e,R}-\frac{1}{2}\Delta_x v_{\e,R}\right)\cdot \nabla_x v_{\e,R}
\right\rangle-\frac{1}{2}|\nabla_x^2 v_{\e,R}|^2\\
&=:I_3^{\e,R}-\frac{1}{2}|\nabla_x^2 v_{\e,R}|^2.
\end{align*}
Here we use the notation to $\langle \cdot \rangle$ to denote the total inner product for
all the components in $\R^m$ and $\R^L$.
(For example,  $\left\langle \nabla_x \left(\partial_t v_{\e,R}-\frac{1}{2}\Delta_x v_{\e,R}\right)\cdot \nabla_x v_{\e,R}
\right\rangle$=$\sum_{i=1}^m\sum_{k=1}^L \partial_{x_i}\big(\partial_t v_{\e,R}^k$ $-\frac{1}{2}\Delta_x v_{\e,R}^k\big)$
$\partial_{x_i} v_{\e,R}^k$)

According to
\eqref{l3-3-3} and \eqref{l3-3-0} we find that
\begin{align*}
I_1^{\e,R}&=-\frac{R^4}{2\e^2}\left|\chi'\left({\rm dist}^2_N(v_{\e,R})\right)\right|^2
\left|\bar \nabla \left({\rm dist}^2_N\right)\left(v_{\e,R}\right)  \right|^2\\
&+\frac{R^2}{\e}
\chi'\left({\rm dist}^2_N(v_{\e,R})\right)\left\langle \bar f^R\left(v_{\e,R},\nabla_x v_{\e,R}\right),
\bar \nabla \left({\rm dist}^2_N\right)\left(v_{\e,R}\right) \right\rangle\\
&=-\frac{2R^4}{\e^2}\left|\chi'\left({\rm dist}^2_N\right)(v_{\e,R})\right|^2 {\rm dist}_N^2
\left(v_{\e,R}\right).
\end{align*}
Here in the last step we have used the property that
\begin{align*}
\left\langle \bar f^R(p,u), \bar \nabla {\rm dist}_N\left(p\right)\right\rangle=0,\
\ \forall\ p\in B(N,3\delta_0), u\in \R^{mL},
\end{align*}
which is due to the fact that $\bar f^R(p,u)\in T_{P_N(p)}N$ (see the definition \eqref{e2-5} of $\bar f$) and
$\bar \nabla \left({\rm dist}_N^2\right)\left(p\right)\in T_{P_N(p)}^{\bot}N$.

Note that for every $p\in B(N,3\delta_0)$, ${\rm dist}_N(p)^2=|p-P_N(p)|^2$,
hence for
every $p\in B(N,3\delta_0)$,
\begin{align*}
\frac{\partial^2 {\rm dist}_N^2}{\partial p_i\partial p_j}(p)=
2\sum_{k=1}^L\left(\left(\delta_{ik}-\frac{\partial P_N^k}{\partial p_i}(p)\right)
\left(\delta_{jk}-\frac{\partial P_N^k}{\partial p_j}(p)\right)-
\left(p_k-P_N^k(p)\right)\frac{\partial^2 P_N^k}{\partial p_i\partial p_j}(p)\right).
\end{align*}
Here $\delta_{ij}$ denotes the Kronecker delta function
(i.e. $\delta_{ij}=0$ if $i\neq j$ and $\delta_{ij}=1$ when $i=j$), $P_N^k(p)$ means the $k$-th components of $P_N(p)$, i.e.
$P_N(p)=\left(P_N^1(p),\cdots P_N^L(p)\right)$.

Based on this we obtain that when ${\rm dist}_N(v_\e)\le 2\delta_0$,
\begin{align*}
&\left\langle \nabla_x\left(\bar \nabla \left({\rm dist}_N^2\right)(v_{\e,R})\right)\cdot \nabla_x v_{\e,R}\right\rangle\\
&=\sum_{i,j=1}^L \sum_{l=1}^m\frac{\partial^2 {\rm dist}^2_N}{\partial p_i \partial p_j}\left(v_{\e,R}\right)\frac{\partial v_{\e,R}^i}{\partial x_l}
\frac{\partial v_{\e,R}^j}{\partial x_l}\\
&=2\sum_{k=1}^L\sum_{l=1}^m\left(\sum_{i=1}^L \frac{\partial v_{\e,R}^i}{\partial x_l}\left(\delta_{ik}-\frac{\partial P_N^k}{\partial p_i}(v_{\e,R})\right)\right)^2\\
&\quad -2\sum_{i,j,k=1}^L\sum_{l=1}^m\left(
\left(v_{\e,R}^k-P_N^k(v_{\e,R})\right)\frac{\partial^2 P_N^k}{\partial p_i\partial p_j}(v_{\e,R})\right)
\frac{\partial v_{\e,R}^i}{\partial x_l}
\frac{\partial v_{\e,R}^j}{\partial x_l}\\
&\ge -c_1{\rm dist}_N(v_{\e,R})|\nabla_x v_{\e,R}|^2,
\end{align*}
where in the last step we have used the fact $|v_{\e,R}^k-P_N^k(v_{\e,R})|\le {\rm dist}_N(v_{\e,R})$ and
$$ \sup_{p\in B(N,2\delta_0)}\Big|\frac{\partial^2 P_N^k}{\partial p_i\partial p_j}(p)\Big|\le c_2.$$

This along with the fact $\chi'\ge 0$ yields that when ${\rm dist}_N(v_{\e,R})\le 2\delta_0$,
\begin{align*}
\quad \ I_2^{\e,R}&\le \frac{c_1R^2}{2\e}\chi'\left({\rm dist}_N^2(v_{\e,R})\right){\rm dist}_N(v_{\e,R})|\nabla_x v_{\e,R}|^2
\\
&+\frac{R^2}{2\e}\left|\chi''\left({\rm dist}_N^2(v_{\e,R})\right)\right|\left|\bar \nabla ({\rm dist}_N^2)(v_{\e,R})\right|^2
|\nabla_x v_{\e,R}|^2\\
&\le \frac{R^4}{2\e^2} \left|\chi'\left({\rm dist}_N^2(v_{\e,R})\right)\right|^2
{\rm dist}_N^2(v_{\e,R})\\
&\quad +\frac{R^4}{\e^2}\left|\chi''\left({\rm dist}_N^2(v_{\e,R})\right)\right|^2{\rm dist}_N^4(v_{\e,R})
1_{\{{\rm dist}_N(v_{\e,R})\ge \delta_0\}}+c_3|\nabla_x v_{\e,R}|^4\\
&\le \frac{R^4}{2\e^2} \left|\chi'\left({\rm dist}_N^2(v_\e)\right)\right|^2
{\rm dist}_N^2(v_{\e,R})+ \frac{c_4R^4}{\e^2}G^2\left(v_{\e,R}\right)+c_3|\nabla_x v_{\e,R}|^4\\
&\le \frac{R^4}{2\e^2} \left|\chi'\left({\rm dist}_N^2(v_{\e,R})\right)\right|^2
{\rm dist}_N^2(v_{\e,R})+ c_5e(v_{\e,R})^2.
\end{align*}
Here second inequality follows from Young's inequality and the fact
$\chi''(s)\neq 0$ only when $s\ge \delta_0^2$,  in the third inequality we have applied
the property that
\begin{equation*}
\begin{split}
&\quad\left|\chi''\left({\rm dist}_N^2(v_{\e,R})\right)\right|^2 {\rm dist}_N^4(v_{\e,R})
1_{\{{\rm dist}_N(v_{\e,R})
\ge \delta_0\}}\\
&\le c_61_{\{{\rm dist}_N(v_{\e,R})\ge \delta_0\}}
\le \frac{c_6G^2\left(v_{\e,R}\right)}{\delta_0^4}1_{\{{\rm dist}_N(v_{\e,R})\ge \delta_0\}}\le
c_7G^2\left(v_{\e,R}\right).
\end{split}
\end{equation*}

By \eqref{l3-3-0} again we have
\begin{align*}
I_3^{\e,R}&=I_2^{\e,R}+\left\langle \nabla_x\left(\bar f^R\left(v_{\e,R},\nabla_x v_{\e,R}\right)\right)\cdot \nabla_x v_{\e,R} \right\rangle.
\end{align*}
According to \eqref{e3-1a} we obtain immediately that
\begin{align*}
\left\langle \nabla_x\left(\bar f^R\left(v_{\e,R},\nabla_x v_{\e,R}\right)\right)\cdot \nabla_x v_{\e,R} \right\rangle
&\le c_8\left(
R^2|\nabla_x v_{\e,R}|^2+R|\nabla_x v_{\e,R}|^3
+ R|\nabla^2_x v_{\e,R}| |\nabla_x v_{\e,R}|\right)\\
&\le \frac{1}{2}|\nabla^2_x v_{\e,R}|^2+c_9\left(R^2
|\nabla_x v_{\e,R}|^2+|\nabla_x v_{\e,R}|^4\right)\\
&\le \frac{1}{2}|\nabla^2_x v_{\e,R}|^2+c_{10}e(v_{\e,R})\left(R^2+e(v_{\e,R})\right),
\end{align*}
where in the second inequality above we have used Young's inequality.

Combing all above estimates for $I_1^{\e,R}$, $I_2^{\e,R}$, $I_3^{\e,R}$ together we can prove the desired conclusion
\eqref{l3-3-1}.
\end{proof}

\begin{rem}
Due to the appearance  of term $\bar f$, the solution $v_\e$ to \eqref{e3-1} is no longer scaling invariant. Therefore compared with
the method in \cite{CS} and \cite{S2}, in Lemma \ref{l3-2} and Lemma \ref{l3-3} above we could not only consider the situation
for $R=1$.
\end{rem}

\begin{lem}\label{l3-4}
Suppose that $\Psi_\e(R)$ is defined \eqref{e3-3}. There exist positive constants $\theta_0$ and $R_0\in (0,1/2)$ such that if for some
$(t_0,x_0)\in [0,T]\times \T^m$, $R<\min\{R_0,\sqrt{t_0}/2\}$, $\e\in (0,1)$,
\begin{equation}\label{l3-4-1}
\Psi_\e (R)=\iint_{T_R(z_0)}\Big(\frac{1}{2}|\nabla_x v_\e(t,x)|^2+\frac{1}{\e}G(v_\e(t,x))\Big)
\rho_{z_0}(t,x)\varphi_{x_0}^2(x)dxdt<\theta_0,
\end{equation}
then  we have
\begin{equation}\label{l3-4-2}
\sup_{(t,x)\in Q_{\kappa R}(z_0)}\left(|\nabla v_\e(t,x)|^2+\frac{1}{\e}G\left(v_\e(t,x)\right)\right)\le \frac{C_7}{\kappa^2 R^2}.
\end{equation}
Here $\kappa$  is a positive constant depending only on $E_0:=\int_{\T^m}|\nabla_x h(x)|^2dx$, $R$ (but independent of $\e$),
and $C_7$ is a positive constant independent of $\e$ and $R$.
\end{lem}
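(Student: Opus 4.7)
The plan is to prove this $\varepsilon$-regularity lemma by the classical blow-up/contradiction argument of Struwe, combining the parabolic subsolution inequality of Lemma~\ref{l3-3} with the monotonicity formula of Lemma~\ref{l3-2}.

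\textbf{Step 1 (Contradiction setup).} Suppose \eqref{l3-4-2} fails, i.e.\ no constant $C_7$ works. Introduce the scale-invariant auxiliary function
\[
\sigma(z) := \bigl(\kappa R - d_P(z,z_0)\bigr)^2 \, e(v_\varepsilon)(z), \qquad e(v_\varepsilon) := \tfrac{1}{2}|\nabla_x v_\varepsilon|^2 + \tfrac{1}{\varepsilon} G(v_\varepsilon),
\]
on $\overline{Q_{\kappa R}(z_0)}$, where $d_P$ denotes the parabolic distance. Since $\sigma$ vanishes on the parabolic boundary and is continuous in the interior, it attains its maximum at an interior point $z_1 = (t_1,x_1)$; failure of \eqref{l3-4-2} forces $\sigma(z_1)$ to be arbitrarily large.

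\textbf{Step 2 (Blow-up).} Set $\mu := e(v_\varepsilon)(z_1)^{-1/2}$, which tends to $0$, and consider the rescaled solution
\[
w(s,y) := v_\varepsilon(t_1 + \mu^2 s,\ x_1 + \mu y).
\]
Then $w$ solves an equation of the form \eqref{l3-3-0} with scaling parameter $\mu$ in place of $R$, and satisfies $e(w)(0,0) = 1$. Moreover, by the extremal property of $z_1$, one obtains a uniform bound $e(w) \le 4$ on a parabolic cylinder $Q_\rho(0)$ whose radius $\rho \to \infty$ along the contradiction sequence. Since $\mu \to 0$, the nonlinear coefficient in Lemma~\ref{l3-3} becomes negligible.

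\textbf{Step 3 (Mean value inequality).} By Lemma~\ref{l3-3}, $e(w)$ is a nonnegative bounded subsolution of $(\partial_s - \tfrac{1}{2}\Delta_y)\,e(w) \le C_6\, e(w)\,(\mu^2 + e(w))$ on $Q_\rho(0)$, with right-hand side bounded by $C_6\cdot 4\cdot(\mu^2 + 4)$. The standard parabolic mean-value inequality for subsolutions of the heat equation with bounded zero-order term then gives
\[
1 = e(w)(0,0) \;\le\; C_8 \iint_{Q_1(0)} e(w)(s,y)\, dy\, ds,
\]
so $\iint_{Q_1(0)} e(w) \ge C_8^{-1}$. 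Scaling back, this produces a uniform lower bound on $\iint_{Q_\mu(z_1)} e(v_\varepsilon)(t,x)\, dx\, dt$.

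\textbf{Step 4 (Conclusion via monotonicity).} The lower bound on $\iint_{Q_\mu(z_1)} e(v_\varepsilon)$ translates, via comparison with the heat-kernel weight $\rho_{z_1}$ appearing in \eqref{e3-3}, into a lower bound on the centered quantity $\Psi_\varepsilon(\mu)$ at $z_1$. Lemma~\ref{l3-2} (applied at $z_1$) and a standard covering argument relating a monotonicity formula centered at $z_1$ to the one centered at $z_0$ then propagate this lower bound back to scale $R$ at $z_0$. Choosing $\theta_0$ strictly smaller than the resulting universal threshold contradicts \eqref{l3-4-1}; so \eqref{l3-4-2} must hold for a suitable universal $C_7$, with $\kappa$ chosen depending on $R$ and on $E_0 = \int_{\T^m} |\nabla_x h|^2$ through the global energy control from Lemma~\ref{l3-1}.

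\textbf{Main obstacle.} The delicate point is Step~4: the monotonicity inequality \eqref{l3-2-2} is stated with a fixed center $z_0$, so propagating a lower bound on energy concentrated near a translated point $z_1$ back to the original center $z_0$ requires either a shifted variant of the monotonicity formula or a careful comparison of heat kernels $\rho_{z_0}$ and $\rho_{z_1}$ on overlapping parabolic annuli. Controlling the additive error in \eqref{l3-2-2} over a long range of scales is precisely what forces $\kappa$ (and hence $C_7/\kappa^2 R^2$) to depend on the global energy $E_0$ and on $R$.
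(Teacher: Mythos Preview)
Your proposal is essentially the same argument as the paper's: point-picking via a scale-weighted energy, parabolic rescaling at the extremal point, the subsolution inequality from Lemma~\ref{l3-3} combined with a mean-value estimate, and then the monotonicity formula of Lemma~\ref{l3-2} (together with the global energy bound of Lemma~\ref{l3-1}) to derive a contradiction with the smallness hypothesis \eqref{l3-4-1}. Your identification of Step~4 as the delicate point is accurate; the paper handles it exactly by invoking the argument of \cite[Lemma~2.4, eq.~(2.19)]{CS}, which compares the unweighted local energy near $z_1$ with the weighted monotonicity quantity centered at $z_0$ and produces the dependence of $\kappa$ on $E_0$ and $R$.

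One small framing difference: you set up the argument as a contradiction on the existence of $C_7$ (implicitly along a sequence), whereas the paper argues directly for a \emph{fixed} $\varepsilon$ and $R$ by maximizing $(r_1-r)^2\sup_{Q_r}e(v_\varepsilon)$ over $r\in[0,r_1]$, locating $r_0$ and then $z_1\in\overline{Q_{r_0}}$, and showing that the dimensionless quantity $K_0:=\sqrt{e_0}\cdot\tfrac{1}{2}(r_1-r_0)$ must satisfy $K_0\le 1$ (the contradiction is only in this sub-step). This direct point-picking avoids any compactness or limiting procedure and yields the bound $4$ for the scale-weighted maximum immediately; your sequence formulation would also work but is slightly less clean, since for any single smooth $v_\varepsilon$ the supremum in \eqref{l3-4-2} is automatically finite.
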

\begin{proof}
The proof is almost the same as that of \cite[Lemma 2.4]{CS} or \cite[Theorem 5.1]{S2}, the only difference here is
that we have to use the equation \eqref{e3-1} which is not scaling invariant. For convenience
of readers we also give the details here.

Set $e(v_\e):=\frac{1}{2}|\nabla_x v_\e|^2+\frac{1}{\e}G\left(v_\e\right)$. Let $r_1:=\kappa R$ for
some positive constant $\kappa\in (0,1/2)$ to be determined later.
In the proof we write $Q_r(z_0)$ for $Q_r$ with every $r>0$ for simplicity. Then we find an $r_0\in [0,r_1]$ such that
\begin{equation}\label{l3-4-3}
\sup_{0\le r \le r_1}\left\{(r_1-r)^2\sup_{(t,x)\in Q_r}e(v_\e)(t,x)\right\}=(r_1-r_0)^2\sup_{(t,x)\in Q_{r_0}}e(v_{\e})(t,x).
\end{equation}
Moreover, there exists a $z_1=(t_1,x_1)\in \overline{Q_{r_0}}$ such that
\begin{equation*}
\sup_{(t,x)\in Q_{r_0}}e(v_\e)(t,x)=e(v_\e)(t_1,x_1)=:e_0.
\end{equation*}
Let $s_0:=\frac{1}{2}(r_1-r_0)$, it is easy to see that $Q_{s_0}(z_1)\subset Q_{r_0+s_0}$. Hence by
\eqref{l3-4-3} we have
\begin{equation*}
\sup_{(t,x)\in Q_{s_0}(z_1)}e(v_\e)(t,x)\le \sup_{(t,x)\in Q_{r_0+s_0}}e(v_\e)(t,x)
\le \frac{(r_1-r_0)^2}{s_0^2}\sup_{(t,x)\in Q_{r_0}}e(v_\e)(t,x)= 4e_0.
\end{equation*}
Now set
\begin{equation*}
\begin{split}
&K_0:=\sqrt{e_0}s_0,\ v_{\e,e_0}(t,x):=v_\e\left(\frac{t}{e_0}+t_1,\frac{x}{\sqrt{e_0}}+x_1\right),\\
&e\left(v_{\e,e_0}\right)(t,x):=\frac{1}{2}|\nabla_x v_{\e,e_0}(t,x)|^2+\frac{1}{\e^2 e_0}G\left(v_{\e,e_0}(t,x)\right).
\end{split}
\end{equation*}
Obviously we have
\begin{equation}\label{l3-4-4}
\begin{split}
&e\left(v_{\e,e_0}\right)(0,0)=\frac{1}{e_0}e(v_\e)(t_1,x_1)=1,\\
&\sup_{(t,x)\in Q_{K_0}((0,0))}e\left(v_{\e,e_0}\right)(t,x)=
\frac{1}{e_0}\sup_{t,x\in Q_{s_0}(z_1)}e(v_\e)(t,x)\le 4.
\end{split}
\end{equation}
Meanwhile, it is not difficult to verify that $v_{\e,e_0}$ satisfies \eqref{l3-3-0} with $\e=\e$ and $R=\frac{1}{\sqrt{e_0}}$,
therefore according to Lemma \ref{l3-3} we have
\begin{equation}\label{l3-4-5}
\left(\partial_t-\frac{1}{2}\Delta_x\right) e\left(v_{\e,e_0}\right)\le c_1e\left(v_{\e,e_0}\right)\left(e_0^{-1}+e\left(v_{\e,e_0}\right)\right).
\end{equation}
Now we claim that $K_0:=\sqrt{e_0}s_0\le 1$. In fact, if $K_0>1$, then $e_0^{-1}< s_0^2\le T$, thus
by \eqref{l3-4-4} and \eqref{l3-4-5} it holds
\begin{equation*}
\begin{split}
\left(\partial_t-\frac{1}{2}\Delta_x\right) e\left(v_{\e,e_0}\right)\le c_1 e\left(v_{\e,e_0}\right)\left(T+e\left(v_{\e,e_0}\right)\right)
\le c_2e\left(v_{\e,e_0}\right)\ {\rm on}\ \ Q_{K_0}((0,0)).
\end{split}
\end{equation*}
Therefore for $\tilde e\left(v_{\e,e_0}\right):=e^{-c_2 t}e\left(v_{\e,e_0}\right)$ we have
\begin{equation*}
\left(\partial_t-\frac{1}{2}\Delta_x\right) \tilde e\left(v_{\e,e_0}\right)\le 0 \ {\rm on}\ \ Q_{K_0}((0,0)).
\end{equation*}
Since we assume that $K_0>1$, according to mean value theorem for sub-parabolic function
in \cite[Theorem 3]{Mo} or \cite[Theorem 5.2.9]{Sa} we obtain
\begin{equation}\label{l3-4-6}
\begin{split}
1=\tilde e\left(v_{\e,e_0}\right)(0,0)&\le c_3\int_{Q_1((0,0))}\tilde e\left(v_{\e,e_0}\right)(t,x)dtdx\\
&\le c_4 \int_{Q_1((0,0))}e\left(v_{\e,e_0}\right)(t,x)dtdx=c_4e_0^{\frac{m}{2}}
\int_{Q_{\frac{1}{\sqrt{e_0}}}(z_1)}e\left(v_{\e}\right)(t,x)dtdx.
\end{split}
\end{equation}
According to \eqref{l3-2-1}, \eqref{l3-2-2} and following the same arguments in the proof
of (2.19) in \cite[Lemma 2.4]{CS} (and also the comments in the proof of \cite[Lemma 4.4]{CS}), for any
$\delta>0$ we can find
$\kappa(\delta)\in (0,1)$ which may depend on $R$
and $c_5(\delta)>0$ independent of $R$ such that
for every $z\in Q_{r}$, $s>0$ with $r+s\le \kappa R$,
\begin{equation*}
\begin{split}
s^{-m}\int_{Q_s(z)}e\left(v_{\e}\right)(t,x)dtdx&\le
c_5\left(\Psi_\e(R)+RE_0\right)+\delta E_0\\
&\le c_5\left(\theta_0+R_0E_0\right)+\delta E_0,
\end{split}
\end{equation*}
where the last inequality follows from \eqref{l3-4-1} and the fact $R\le R_0$.

Note that $z_1\in Q_{r_0}$ and $\frac{1}{\sqrt{e_0}}+r_0< s_0+r_0\le r_1=\kappa R$, so for every $\delta>0$, we can find a
$c_6(\delta)>0$ such that
\begin{equation*}
\begin{split}
c_4e_0^{\frac{m}{2}}
\int_{Q_{\frac{1}{\sqrt{e_0}}}(z_1)}e\left(v_{\e,e_0}\right)(t,x)dtdx
&\le c_6\left(\theta_0+R_0E_0\right)+\delta E_0.
\end{split}
\end{equation*}
 Hence choosing
$\delta=\min\{\frac{1}{4E_0},\frac{1}{2}\}$, $\theta_0=\frac{1}{4c_6(\delta)}$,
$R_0=\min\{\frac{1}{4c_6E_0},\frac{1}{2}\}$ we get
\begin{equation*}
c_4e_0^{\frac{m}{2}}
\int_{Q_{\frac{1}{\sqrt{e_0}}}(z_1)}e\left(v_{\e,e_0}\right)(t,x)dtdx\le \frac{3}{4},
\end{equation*}
which is a contradiction to \eqref{l3-4-6}. So we  obtain that $K_0\le 1$. This along with
\eqref{l3-4-3} yields that for any $r\in [0,r_1]$,
\begin{equation*}
\begin{split}
(r_1-r)^2\sup_{(t,x)\in Q_r}e\left(v_\e\right)(t,x)&\le
\sup_{0\le r \le r_1}\left\{(r_1-r)^2\sup_{(t,x)\in Q_r}e\left(v_\e\right)(t,x)\right\}\\
&=(r_1-r_0)^2\sup_{(t,x)\in Q_{r_0}}e\left(v_\e\right)(t,x)=4s_0^2e_0=4K_0\le 4.
\end{split}
\end{equation*}
Therefore taking $r=\frac{r_1}{2}=\frac{\kappa R}{2}$ we can prove desired conclusion
\eqref{l3-4-2}.
\end{proof}

\begin{lem}\label{l3-5}
Let $R_0$, $\theta_0$ be the same constants in Lemma \ref{l3-4}, we define
\begin{equation}\label{l3-5-1}
\begin{split}
\Sigma:=\bigcap_{R\in (0,R_0)}&\Big\{z_0=(t_0,x_0)\in [0,T]\times \T^m;\\
& \liminf_{\e \to 0}
\iint_{T_R(z_0)}\Big(\frac{1}{2}|\nabla_x v_\e(t,x)|^2+\frac{1}{\e}G(v_\e(t,x))\Big)
\rho_{z_0}(t,x)\varphi_{x_0}^2(x)dxdt\ge \theta_0\Big\}.
\end{split}
\end{equation}
The $\Sigma$ is a closed subset of $[0,T]\times \T^m$ which has locally finite $m$-dimensional
Hausdorff measure with respect to the parabolic metric $\tilde d$ defined
by $\tilde d(z_1,z_2):=|t_1-t_2|^2+|x_1-x_2|$, $\forall z_1=(t_1,x_1)$, $z_2=(t_2,x_2)$.
\end{lem}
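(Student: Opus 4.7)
The plan is to establish closedness via the $\e$-regularity of Lemma \ref{l3-4} combined with Gaussian tail estimates for the weight $\rho_{z_0}$, and to derive the Hausdorff bound through a Vitali-type covering argument powered by the uniform energy bound of Lemma \ref{l3-1}. Throughout, write $e(v_\e) := \tfrac{1}{2}|\nabla_x v_\e|^2 + \tfrac{1}{\e}G(v_\e)$; by Lemma \ref{l3-1},
\begin{equation*}
\iint_{[0,T]\times\T^m} e(v_\e)\,dx\,dt \le C_8, \qquad \sup_{t\in[0,T]}\int_{\T^m} e(v_\e)(t,x)\,dx \le C_8
\end{equation*}
uniformly in $\e$.

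For closedness, suppose $z_0=(t_0,x_0)\notin\Sigma$, so that for some $R\in(0,R_0)$ and a subsequence $\e_k\downarrow 0$ one has $\Psi_{\e_k}(R;z_0)<\theta_0$ for all large $k$. Lemma \ref{l3-4} then gives the pointwise bound $|\nabla_x v_{\e_k}|^2+\tfrac{1}{\e_k}G(v_{\e_k})\le C_7/(\kappa R)^2=:M$ on $Q_{\kappa R}(z_0)$, with $\kappa$ independent of $k$. For any $z'=(t',x')$ within parabolic distance $\kappa R/16$ of $z_0$ and any $R'\in(0,\kappa R/16)$, decompose $\Psi_{\e_k}(R';z')$ into the part over $\{|x-x'|\le\kappa R/4\}$, which lies in $Q_{\kappa R}(z_0)$ and is bounded by $3M(R')^2$ using $\int\rho_{z'}\le 1$, and the part over $\{|x-x'|>\kappa R/4\}$, where $\rho_{z'}(t,x)\le C(R')^{-m}\exp(-c(\kappa R/R')^2)$ combined with the global energy bound gives a contribution that is super-exponentially small in $\kappa R/R'$. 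Choosing $R'$ sufficiently small, both pieces together are less than $\theta_0$ for every large $k$, so $z'\notin\Sigma$; hence $\Sigma^c$ is open.

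For the Hausdorff bound, fix $z_0\in\Sigma$ and $R\in(0,R_0/2)$. Using $\rho_{z_0}(t,x)\le CR^{-m}\exp(-|x-x_0|^2/(8R^2))$ on $T_R(z_0)$ and splitting $\Psi_\e(R;z_0)$ into integrals over $\{|x-x_0|\le AR\}$ and its complement, with $A=A(R):=\sqrt{8m\log(1/R)+C_9}$, the far-field contribution is controlled by the $L^1$-bound $\iint_{T_R(z_0)}e(v_\e)\le CR^2$ and becomes negligible compared with $\theta_0$. The near-field part yields
\begin{equation*}
c_0 R^m \le \liminf_{\e\to 0}\iint_{\tilde A_R(z_0)} e(v_\e)\,dx\,dt, \qquad \tilde A_R(z_0):=(t_0-4R^2,t_0-R^2)\times B_{\T^m}(x_0,A(R)R),
\end{equation*}
for all sufficiently small $R$. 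Given a compact $K\subset\Sigma$ with time coordinates bounded away from $0$, a standard Vitali $5$-covering extracts from $\{\tilde A_{R/5}(z):z\in K,\,R\le\delta\}$ a countable disjoint subfamily $\{\tilde A_{R_i/5}(z_i)\}$ whose $5$-dilations cover $K$; summing the lower bound and applying Fatou's inequality $\sum_i\liminf_\e\le\liminf_\e\sum_i$ together with disjointness and the global energy bound produces $\sum_i R_i^m\le C/(\theta_0 c_0)$ uniformly in $\delta$. Since the $\tilde d$-diameter of $\tilde A_{R_i}(z_i)$ is comparable to $A(R_i)R_i$, absorbing the slowly-varying factor $A(R_i)^m$ yields the stated local finiteness of the $m$-dimensional $\tilde d$-Hausdorff measure.

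The main obstacle is that $T_R(z_0)$ is a time slab extending globally in the spatial variable rather than a parabolic ball of size $R$, so the concentration condition $\Psi_\e(R;z_0)\ge\theta_0$ does not directly yield an energy lower bound on a parabolic neighborhood of $z_0$; overcoming this requires the logarithmically widened cutoff $A(R)R$ together with the a priori energy bound to control the Gaussian tail. The same tail-estimate balancing underlies the closedness argument, where the pointwise $C^1$-control from Lemma \ref{l3-4} must be matched against the exponential smallness of the weight $\rho_{z'}$ outside the bulk of its Gaussian profile.
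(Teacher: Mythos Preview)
Your closedness argument is sound and is a legitimate alternative to the monotonicity-based proof the paper invokes (via Struwe \cite{S2}): using the $\e$-regularity of Lemma \ref{l3-4} to obtain a uniform $L^\infty$ bound on $e(v_{\e_k})$ near $z_0$, then splitting $\Psi_{\e_k}(R';z')$ into a bulk piece controlled by this bound and a tail piece controlled by Gaussian decay, does show $\Sigma^c$ is open.

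Your Hausdorff estimate, however, has a genuine gap. First, the sets $\tilde A_R(z_0)=(t_0-4R^2,t_0-R^2)\times B_{\T^m}(x_0,A(R)R)$ do \emph{not} contain $z_0$, so neither they nor their $5$-dilations $\tilde A_R(z_0)$ form a covering of $K$; the Vitali step as written does not produce a cover of $\Sigma$. Second, even if you run Vitali on genuine parabolic balls $Q_{R_i}(z_i)$ and then pass to the associated $\tilde A_{R_i/5}(z_i)$, disjointness of the $Q_{R_i/5}(z_i)$ does not imply disjointness of the $\tilde A_{R_i/5}(z_i)$, since the latter have spatial extent $A(R_i)R_i\gg R_i$ for small $R_i$. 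Third, and most seriously, the final step ``absorbing the slowly-varying factor $A(R_i)^m$'' does not yield finite $m$-dimensional measure: from $\sum_i R_i^m\le C$ you cannot deduce $\sum_i (A(R_i)R_i)^m\le C'$ uniformly in the scale, because $A(R)^m\sim(\log(1/R))^{m/2}\to\infty$. At best this line of reasoning gives $\mathcal H^{m+\epsilon}(\Sigma)=0$ for every $\epsilon>0$, i.e.\ Hausdorff dimension at most $m$, which is strictly weaker than the claimed local finiteness of $\mathcal H^m$.

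The approach the paper follows (that of \cite[Theorem 6.1]{S2}, using the monotonicity formulas \eqref{l3-2-1}--\eqref{l3-2-2}) avoids spatial localization altogether. One covers $K\cap\Sigma$ at a \emph{fixed} scale $r$ by Vitali, obtaining disjoint $Q_r(z_i)$, and then bounds $\sum_i\Psi_\e(r;z_i)$ directly: for each fixed $(t,x)$ the indices $i$ with $t\in T_r(z_i)$ have $t_i$ in an interval of length $3r^2$, and after splitting into $O(1)$ time-groups the corresponding $x_i$ are $2r$-separated, whence $\sum_i\mathbf 1_{T_r(z_i)}(t)\rho_{z_i}(t,x)\le Cr^{-m}$ with $C$ independent of $r$. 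Combined with $\iint e(v_\e)\le C_8T$ this gives $N\theta_0/2\le\sum_i\Psi_\e(r;z_i)\le C'r^{-m}$, hence $\sum_i(5r)^m\le C''$ uniformly in $r$, which is exactly finite $\mathcal H^m$. The point is that the Gaussian weights $\rho_{z_i}$ already encode the localization, and summing them over a separated family is what replaces your attempted truncation at radius $A(R)R$.
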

\begin{proof}
According to formula \eqref{l3-2-1}, \eqref{l3-2-2} (and the comments in the proof \cite[Lemma 4.4]{CS}), the proof
is exactly the same as that of \cite[Theorem 6.1]{S2}, so we do not include the details here.
\end{proof}

Now we start to prove Theorem \ref{t2-1}

\begin{proof}[Proof of Theorem \ref{t2-1}]
{\bf Step (i)} Suppose $v_\e$ is the solution to \eqref{e3-1}, set
\begin{equation*}
Y_t^{x,\e}:=v_\e(T-t,B_t+x), Z_t^{x,\e}:=\nabla_x v_\e(T-t,B_t+x),\ \ \forall\ (t,x)\in [0,T]\times \T^m.
\end{equation*}
Since $v_\e\in C^2((0,T]\times\T^m;\R^L)\cap C^1([0,T]\times \T^m;\R^L)$, applying It\^o's formula and \eqref{e3-1} we obtain immediately that
for every $(t,x)\in [0,T]\times \T^m$,
\begin{equation}\label{t3-1-1}
\begin{split}
Y_t^{x,\e}&=h\left(B_T+x\right)-\sum_{i=1}^m \int_t^T Z_s^{x,i,\e} dB_s^i-
\int_t^T \frac{1}{2\e}g\left(Y_s^{x,\e}\right)ds
+\int_t^T \bar f\left(Y_s^{x,\e},Z_s^{x,\e}\right)ds.
\end{split}
\end{equation}
According to the uniform estimates \eqref{l3-1-1} for $v_\e$, we can find a
function $v\in W^{1,2}([0,T];$ $ L^2(\T^m;\R^L))$ satisfying $\nabla_x v\in L^\infty([0,T];L^2(\T^m;\R^{mL}) )$ and a subsequence
$\{\e_k\}_{k=1}^\infty$ with $\lim_{k \to \infty}\e_k=0$, such that
\begin{equation}\label{t3-1-2}
\partial_t v_{\e_k}\to \partial_t v\ {\rm weakly\ in}\ L^2([0,T];L^2(\T^m;\R^L)),
\end{equation}
\begin{equation}\label{t3-1-3}
\nabla_x v_{\e_k}\to \nabla_x v\ {\rm weakly}^*\ {\rm in}\ L^\infty([0,T];L^2(\T^m;\R^{m L})).
\end{equation}
By \eqref{l3-1-1} again, Sobolev embedding theorem and diagonal principle, there exists a subsequence
$\{\e_k\}_{k=1}^\infty$ (through this proof we always denote it by $\{\e_k\}_{k=1}^\infty$ for simplicity) with $\lim_{k \to \infty}\e_k=0$
such that
\begin{equation}\label{t3-1-4}
\lim_{k \to \infty}\int_{\T^m}|v_{\e_k}(t,x)-v(t,x)|^2dxdt=0,\ \forall t\in \mathbb{Q}\cap [0,T],
\end{equation}
where $\mathbb{Q}$ denotes the collection of all the rational numbers as before. 
Still according to \eqref{l3-1-1} and the calculus for time involving Sobolev space (see
e.g. \cite[Theorem 2, Section 5.9.3]{EV}) we obtain for any $0\le s_1<s_2\le T$ and $\e\in (0,1)$,
\begin{equation}\label{t3-1-4a}
\begin{split}
\Big\|v_{\e}(s_1,\cdot)-v_{\e}(s_2,\cdot)\Big\|_{L^2(\T^m;\R^L)}&\le
\int_{s_1}^{s_2} \|\partial_t v_\e(t,\cdot)\|_{L^2(\T^m;\R^L)}dt\\
&\le \sqrt{\int_0^T\|\partial_t v_\e(t,\cdot)\|_{L^2(\T^m;\R^L)}^2dt}\sqrt{s_2-s_1}\\
&\le c_1\sqrt{s_2-s_1}.
\end{split}
\end{equation}
This along with \eqref{t3-1-4} yields that $v\in C([0,T];L^2(\T^m;\R^L))$, \eqref{t3-1-4a} holds for $v$ and
for every $t\in [0,T]$ we have (choosing a subsequence of $\{v_{\e_k}\}$ if necessary)
\begin{equation}\label{t3-1-5}
\lim_{k \to \infty}\int_{\T^m}\left|v_{\e_k}(t,x)-v(t,x)\right|^2dx=0.
\end{equation}
We define $Y_t^x:=v(T-t,B_t+x)$, $Z_t^x=\nabla_x v(T-t,B_t+x)$ for every $(t,x)\in [0,T]\times \T^m$.
So it follows from \eqref{t3-1-5} that
\begin{equation}\label{t3-1-6}
\lim_{k\to \infty}\E\left[\int_{\T^m}\left|Y_t^{x,\e_k}-Y_t^x\right|^2dx\right]=0,\ \forall\ t\in [0,T].
\end{equation}

For every $0\le s< t\le T$, it holds
\begin{align*}
&\int_{\T^m}|Y_t^x(\omega)-Y_{s}^x(\omega)|^2dx\\
&\le 2\int_{\T^m}|v(T-s,B_{s}(\omega)+x)-v(T-t,B_{s}(\omega)+x)|^2dx\\
&+2\int_{\T^m}|v(T-t,B_{s}(\omega)+x)-v(T-t,B_t(\omega)+x)|^2dx\\
&=2\int_{\T^m}|v(T-s,x)-v(T-t,x)|^2dx+2\int_{\T^m}|v(T-t,x+B_{s}(\omega)-B_t(\omega))-v(T-t,x)|^2dx\\
&=:I_1(s,t,\omega)+I_2(s,t,\omega).
\end{align*}
Applying the fact that \eqref{t3-1-4a} hold for $v$ we obtain
\begin{align*}
I_1(s,t,\omega)\le 2c_1^2|s-t|.
\end{align*}
Meanwhile by standard approximation procedure it is easy to verify
that for every fixed $t\in [0,T]$,
\begin{align*}
\lim_{y \to 0}\int_{\T^m}|v(t,x+y)-v(t,x)|^2dx=0.
\end{align*}
This, along with the continuity of $t\mapsto B_t(\omega)$, implies immediately that
we can find a null set $\Pi_0\subset \Omega$ such that
\begin{align*}
\lim_{s\to t}I_2(s,t,\omega)=0,\ \ \omega\notin \Pi_0,\ t\in [0,T].
\end{align*}
Combing all estimates above we deduce that $t\mapsto Y_t^{\cdot}(\omega)$ is continuous in $L^2(\T^m;\R^L)$ a.s..
According to this and the property that $\sup_{t\in [0,T]}\|v(t,\cdot)\|_{L^2(\T^m;\R^L)}<\infty$ we can prove
\begin{equation*}\label{t3-1-6a}
\E\left[\sup_{t\in [0,T]}\|Y_t^{\cdot}\|_{L^2(\T^m;\R^L)}^2\right]=
\sup_{t\in [0,T]}\|v(t,\cdot)\|_{L^2(\T^m;\R^L)}^2<\infty.
\end{equation*}

Hence by all the properties above we have verified that
$Y\in \S^2(\T^m;\R^L)$. At the same time, since $\nabla_x v\in L^\infty([0,T];L^2(\T^m;\R^L))$, we have immediately
that $Z\in \M^2(\T^m;\R^{L})$.

Moreover, for every $\psi\in C^2(\T^m;\R^L)$ it holds that
\begin{align*}
&\E\left[\int_0^T \left|\int_{\T^m}\left\langle Z_t^{x,i,\e_k},\psi(x)\right\rangle dx-
\int_{\T^m}\left\langle Z_t^{x,i},\psi(x)\right\rangle dx\right|^2 dt\right]\\
&=\E\left[\int_0^T \left|\int_{\T^m}\left\langle \frac{\partial v_{\e_k}}{\partial x_i}(T-t,B_t+x),\psi(x)\right\rangle dx-
\int_{\T^m}\left\langle \frac{\partial v}{\partial x_i}(T-t,B_t+x),\psi(x)\right\rangle dx\right|^2 dt\right]\\
&=\E\left[\int_0^T \left|\int_{\T^m}\left\langle v_{\e_k}(T-t,x),\frac{\partial \psi}{\partial x_i}(x-B_t)\right\rangle dx-
\int_{\T^m}\left\langle v(T-t,x),\frac{\partial \psi}{\partial x_i}(x-B_t)\right\rangle dx\right|^2 dt\right]\\
&\le c_2\|\nabla_x \psi\|_\infty^2 \int_0^T\int_{\T^m} |v_{\e_k}(t,x)-v(t,x)|^2dxdt.
\end{align*}
So by \eqref{t3-1-4a} and \eqref{t3-1-5} we have
\begin{align*}
\lim_{k \to \infty}\E\left[\int_0^T \left|\int_{\T^m}\left\langle Z_t^{x,i,\e_k},\psi(x)\right\rangle dx-
\int_{\T^m}\left\langle Z_t^{x,i},\psi(x)\right\rangle dx\right|^2 dt\right]=0,
\end{align*}
which implies that for each $t\in [0,T]$ and $1\le i \le m$,
\begin{equation}\label{t3-1-7}
\lim_{k \to \infty}\E\left[\Big|\int_t^T \left(\int_{\T^m}\langle Z_s^{x,i,\e_k}, \psi(x)\rangle dx \right)dB_s^i-
\int_t^T \left(\int_{\T^m}\langle Z_s^{x,i}, \psi(x)\rangle dx\right) dB_s^i\Big|^2\right]=0.
\end{equation}

{\bf Step (ii)} Let $\Sigma\subset [0,T]\times \T^m$ be defined by \eqref{l3-5-1}. By definition, for every
$z_0\notin \Sigma$, \eqref{l3-4-1} holds for some $R\in (0,R_0)$, therefore by \eqref{l3-4-2} we could find
a neighborhood $Q(z_0):=Q_{\kappa R}(z_0)$ of $z_0$ such that (taking a subsequence of $\{v_{\e_k}\}$ if necessary)
\begin{equation}\label{t3-1-8}
\sup_{k>0}\Big\{\sup_{(t,x)\in Q(z_0)}\Big(|\nabla_x v_{\e_k}(t,x)|^2+\frac{1}{\e_k}G\big(v_{\e_k}(t,x)\big)\Big)\Big\}\le c_3<\infty.
\end{equation}
Note that $G\big(v_{\e_k}(t,x)\big)=\chi\left({\rm dist}_N^2(v_{\e_k}(t,x))\right)$ and $\chi(s)\le \delta_0^2$ only if $s\le \delta_0^2$,
it follows from \eqref{t3-1-8} that for every $k$ large enough, 
\begin{equation*}
G\big(v_{\e_k}(t,x)\big)={\rm dist}_N^2\left(v_{\e_k}(t,x)\right)\le c_3\e_k,\ \forall\ (t,x)\in Q(z_0).
\end{equation*}
Note that we could find a countable collection of open neighborhoods $\{Q_i(z_0)\}_{i=1}^\infty$ as above to cover $[0,T]\times \T^m/\Sigma$,
by diagonal principle there exists a subsequence $\{v_{\e_k}\}$ such that
\begin{equation*}
\lim_{k \to \infty}G\big(v_{\e_k}(t,x)\big)=0,\ \ \forall\ (t,x)\in [0,T]\times \T^m/\Sigma.
\end{equation*}
By Lemma \ref{l3-5} we know that
$\Sigma$ has locally finite $m$-dimensional Hausdorff with respect to $\tilde d$, so under
the Lebesgue measure on $[0,T]\times \T^m$, $\Sigma$ is a null set. Therefore according to \eqref{t3-1-5}
it holds that
\begin{equation*}
G\big(v(t,x)\big)=0,\ \ dt\times dx-{\rm a.e.}\ (t,x)\in [0,T]\times \T^m,
\end{equation*}
which implies that $v(t,\cdot)\in L^2(\T^m;N)$ for a.e. $t\in [0,T]$. Combing this with
\eqref{t3-1-4a} we know that for every fixed $t\in [0,T]$.
\begin{equation*}
v(t,x)\in N,\ \ {\rm a.e.}\ x\in \T^m.
\end{equation*}
and $v(t,\cdot)\in L^2(\T^m;N)$.

By this we know for every fixed $t\in [0,T]$ and $1\le i \le m$,
\begin{equation*}
\partial_{x_i} v(t,x)\in T_{v(t,x)}N,\ \  {\rm a.e.}\ x\in \T^m.
\end{equation*}
Hence $Y_t^{\cdot}=v(T-t,B_t+\cdot)\in L^2(\T^m;N)$ for every $t\in [0,T]$ and
$Z_t^{i,x}=\partial_{x_i} v(T-t, B_t+x)\in T_{Y_t^x}N$ for $dt\times dx\times \P$-a.e. $(t,x,\omega)$. Note that it has been proved
that $Y\in \S^2(\T^m;\R^L)$, $Z\in \M^2(\T^m;\R^{L})$ 
in {\bf Step (i)} above, so we have
$(Y,Z)\in \S\otimes\M^2(\T^m;N)$.

{\bf Step (iii)}
Let $e(v_\e)(t,x):=\frac{1}{2}|\nabla_x v_\e(t,x)|^2+\frac{1}{\e}G(v_\e(t,x))$.
By the same methods (to estimate $I_1^{\e,R}$ and $I_2^{\e,R}$) in the proof of Lemma \ref{l3-3} we can prove
for every $\e\in (0,1)$,
\begin{equation*}
(\partial_t-\frac{1}{2}\Delta_x)e(v_\e)+\frac{1}{\e^2}\left|\chi'\left({\rm dist}_N^2(v_\e)\right)\right|^2
{\rm dist}_N^2(v_\e)\le c_4e(v_\e)(1+e(v_\e)),\ (t,x)\in (0,T]\times \T^m.
\end{equation*}
Combing this with \eqref{t3-1-8}, repeating the arguments in the proof of \cite[Theorem 3.1(Page 94)]{CS} and
the comments in the proof of \cite[Lemma 4.4]{CS} we can prove that for any open subset $Q'\subset Q(z_0)$ with $z_0\in
[0,T]\times \T^m/\Sigma$,
\begin{equation}\label{t3-1-9}
\sup_{k\ge 1}\iint_{Q'}|\nabla_x^2 v_{\e_k}(t,x)|^2 dtdx<\infty,
\end{equation}
and
\begin{equation}\label{t3-1-9aa}
\left(\partial_t-\frac{1}{2}\Delta_x\right)v_{\e_k} \to \left(\partial_t-\frac{1}{2}\Delta_x\right)v\ {\rm weakly\ in}\ L^2_{{\rm loc}}(Q(z_0)),
\end{equation}
\begin{equation}\label{t3-1-9a}
\frac{1}{\e_k} {\rm dist}_N(v_{\e_k}) \to \bar \lambda  \ {\rm weakly\ in}\ L^2_{{\rm loc}}(Q(z_0)),
\end{equation}
for some $\bar \lambda \in  L^2_{{\rm loc}}(Q(z_0))$. 

By \eqref{t3-1-9} we can find a subsequence $\{v_{\e_k}\}$  such that
\begin{equation}\label{t3-1-10}
\nabla_x^2 v_{\e_k}\to \nabla_x^2 v\  {\rm weakly\ in}\ L^2_{{\rm loc}}(Q(z_0)).
\end{equation}
At the same time, for every $\varphi\in C_c^\infty(Q(z_0))$ (here $C_c^\infty(Q(z_0))$
denotes the collection of smooth functions defined on $[0,T]\times \T^d$ whose supports are
contained in $Q(z_0)$), we have
\begin{equation*}
\begin{split}
\iint_{Q(z_0)} \left|\nabla_x v_{\e_k}(t,x)\right|^2 \varphi(t,x)dtdx
&=-\iint_{Q(z_0)} \varphi(t,x)\langle \Delta_x v_{\e_k}(t,x), v_{\e_k}(t,x)\rangle dtdx\\
&-\iint_{Q(z_0)}  \langle \nabla_x \varphi(t,x)\cdot \nabla_x v_{\e_k}(t,x), v_{\e_k}(t,x)\rangle dtdx.
\end{split}
\end{equation*}
Based on this expression, according to \eqref{t3-1-3}, \eqref{t3-1-4a}, \eqref{t3-1-5}, \eqref{t3-1-9} and \eqref{t3-1-10} we obtain
\begin{equation*}
\lim_{k \to \infty} \iint_{Q(z_0)} \left|\nabla_x v_{\e_k}(t,x)\right|^2 \varphi(t,x)dtdx=\iint_{Q(z_0)}
 \left|\nabla_x v(t,x)\right|^2 \varphi(t,x)dtdx.
\end{equation*}
This along with \eqref{t3-1-3} yields that for every $\varphi\in C_c^\infty(Q(z_0))$,
\begin{equation*}
\lim_{k \to \infty}\iint_{Q(z_0)} \left|\nabla_x v_{\e_k}(t,x)-\nabla_x v(t,x)\right|^2 \varphi(t,x)dtdx=0,
\end{equation*}
which means (take a subsequence if necessary)
\begin{equation*}
\lim_{k \to \infty}\nabla_x v_{\e_k}(t,x)=\nabla_x v(t,x),\ \ \ dt\times dx-{\rm a.e.}\ (t,x)\in Q(z_0).
\end{equation*}
Note that we could find a collection of countable open neighborhoods $\{Q_i(z_0)\}_{i=1}^\infty$ as above to cover $[0,T]\times \T^m/\Sigma$,
by diagonal principle there exists a subsequence $\{v_{\e_k}\}$ such that
(since the measure of $\Sigma$ is zero under $dt\times dx$)
\begin{equation*}\label{t3-1-11}
\lim_{k \to \infty}\nabla_x v_{\e_k}(t,x)=\nabla_x v(t,x),\ \ \ dt\times dx-{\rm a.e.}\ (t,x)\in [0,T]\times \T^m.
\end{equation*}
This together with \eqref{t3-1-5} implies immediately that (taking a subsequence if necessary)
for
\begin{equation}\label{t3-1-11a}
\lim_{k \to \infty}\bar f\left(v_{\e_k}(t,x),\nabla_x v_{\e_k}(t,x)\right)=
\bar f\left(v(t,x),\nabla_x v(t,x)\right),\ dt\times dx\ {\rm a.e.}-(t,x)\in [0,T]\times \T^m,
\end{equation}
Meanwhile by \eqref{e3-1a} and \eqref{l3-1-1} it is easy to verify that $\bar f\left(v_{\e_k}(T-t,x),\nabla_x v_{\e_k}(T-t,x)\right)$
is uniformly integrable with respect to $dt\times dx$ since
\begin{align*}
&\sup_{k\ge 1}\int_0^T \int_{\T^m} \left|\bar f\left(v_{\e_k}(T-t,x),\nabla_x v_{\e_k}(T-t,x)\right)\right|^2dtdx\\
&\le c_5\left(1+\sup_{k\ge 1}\int_0^T \int_{\T^m}|\nabla_x v_{\e_k}(T-t,x)|^2dtdx\right)<\infty.
\end{align*}
According to this and \eqref{t3-1-11a} we obtain that for every $\hat \psi\in L^\infty([0,T]\times \T^m;\R^L)$,
\begin{align*}
&\lim_{k \to \infty}
 \int_0^T \int_{\T^m}\left\langle \bar f\left(v_{\e_k}(t,x),\nabla_x v_{\e_k}(t,x)\right),
\hat \psi(t,x)\right\rangle dxdt\\
&=\int_0^T \int_{\T^m}\left\langle \bar f\left(v(t,x),\nabla_x v(t,x)\right),
\hat \psi(t,x)\right\rangle dxdt.
\end{align*}
Hence 
for every $\psi\in C^2(\T^m;\R^L)$, $t\in [0,T]$ and
a.s. $\omega\in \Omega$,
\begin{equation}\label{t3-1-12}
\begin{split}
&\quad \lim_{k \to \infty} \int_t^T \int_{\T^m}\left\langle \bar f\left(Y_s^{x,\e_k},Z_s^{x,\e_k}\right),\psi(x)\right\rangle dxds\\
&=\lim_{k \to \infty} \int_t^T \int_{\T^m}\left\langle \bar f\left(v_{\e_k}(T-s,B_s+x),\nabla_x v_{\e_k}(T-s,B_s+x)\right),\psi(x)\right\rangle dxds\\
&=\lim_{k \to \infty} \int_t^T \int_{\T^m}\left\langle \bar f\left(v_{\e_k}(T-s,x),\nabla_x v_{\e_k}(T-s,x)\right),
\psi(x-B_s)\right\rangle dxds\\
&=\int_t^T \int_{\T^m}\left\langle \bar f\left(v(T-s,x),\nabla_x v(T-s,x)\right),
\psi(x-B_s)\right\rangle dxds\\
&=\int_t^T \int_{\T^m}\left\langle \bar f\left(Y_s^{x},Z_s^{x}\right),\psi(x)\right\rangle dxds
\end{split}
\end{equation}

{\bf Step (iv)} Note that by \eqref{t3-1-8} we know $\chi'\left({\rm dist}_N^2(v_{\e_k})\right)=1$ on
$Q(z_0)$ when $k$ is large enough. So as explained in the proof of Lemma \ref{l3-3}, we have
\begin{align*}
\frac{1}{\e_k}g\left(v_{\e_k}\right)&=\frac{1}{\e_k}\bar \nabla {\rm dist}_N^2(v_{\e_k})\\
&=\frac{2}{\e_k}{\rm dist}_N(v_{\e_k})\bar \nabla {\rm dist}_N(v_{\e_k})\in T^{\bot}_{P_N\left(v_{\e_k}\right)}N
\end{align*}
Hence combing this with \eqref{t3-1-9aa}, \eqref{t3-1-9a}, \eqref{t3-1-11a} and following the same arguments in
the proof of \cite[Theorem 3.1(Page 94--95)]{CS} we obtain that for $dt\times dx$-a.e $(t,x)\in [0,T]\times \T^m/\Sigma$,
\begin{equation*}
\Big\{\left(\partial_t-\frac{1}{2}\Delta_x\right)v(t,x)-\bar f\left(v(t,x),\nabla_x v(t,x)\right)\Big\}\bot T_v N.
\end{equation*}
From this we deduce that for $dt\times dx$-a.e $(t,x)\in [0,T]\times \T^m/\Sigma$,
\begin{equation}\label{t3-1-13}
\begin{split}
\left(\partial_t-\frac{1}{2}\Delta_x\right)v-\bar f\left(v,\nabla_x v\right)&=
\sum_{j=1}^{L-n}\left\langle\left(\partial_t-\frac{1}{2}\Delta_x\right)v-\bar f\left(v,\nabla_x v\right), \nu_j(v)\right\rangle\nu_j(v),\\
&=\sum_{j=1}^{L-n}-\left\langle \frac{1}{2}\Delta_x v, \nu_j(v)\right\rangle\nu_j(v)\\
&=-\frac{1}{2}\sum_{i=1}^m A(v)\left(\partial_{x_i} v,\partial_{x_i} v\right), 
\end{split}
\end{equation}
where $\{\nu_i(p)\}_{j=1}^{L-n}$ is an orthonormal basis of $T_p^{\bot}N$ at $p\in N$,
in the second equality  above we have
used the fact $\bar f\left(v,\nabla_x v\right)\in T_v N$, $\partial_t v\in T_v N$ for a.e. $x\in \T^m$, and
the last step follows from the standard property of sub-manifold (see e.g. \cite[Section 1.3]{LW}).

Given \eqref{t3-1-13} and applying the same procedures in the proof of \cite[Theorem 3.1(Page 95)]{CS}
(using again the fact that the measure of $\Sigma$ is zero under $dt\times dx$) we obtain that
for every $\hat \psi\in L^\infty([0,T]\times\T^m;\R^L)$,
\begin{equation*}
\begin{split}
\int_0^T \int_{\T^m}
\left\langle \partial_t v, \hat \psi\right\rangle+\frac{1}{2}\sum_{i=1}^m\Big(\left\langle \partial_{x_i} v,\partial_{x_i}\hat \psi\right\rangle
+\left\langle A(v)(\partial_{x_i} v,\partial_{x_i} v),\hat \psi\right\rangle\Big)-
\left\langle \bar f\left(v,\nabla_x v\right),\hat \psi\right\rangle dtdx=0.
\end{split}
\end{equation*}
Combing this with the equation \eqref{e3-1} and the convergence property \eqref{t3-1-2},\eqref{t3-1-3},\eqref{t3-1-12} it
holds that
\begin{equation}\label{t3-1-14}
\lim_{k \to \infty}\int_0^T \int_{\T^m}\left\langle \frac{1}{\e_k}g\left(v_{\e_k}\right),\hat \psi\right\rangle dtdx=
\sum_{i=1}^m\int_0^T \int_{\T^m}\left\langle  A(v)(\partial_{x_i} v,\partial_{x_i} v),\hat \psi\right\rangle dtdx.
\end{equation}
For any $\psi\in C^2(\T^m;\R^L)$ and $t\in [0,T]$, taking $\hat \psi(s,x)=\psi(T-s,x-B_s)1_{[0,T-t]}(s)$ in \eqref{t3-1-14} where $\psi(s,x)\equiv \psi(x) \ \forall s\in [0,T]$,
we obtain that
for a.s. $\omega\in \Omega$ 
\begin{align*}
&\lim_{k \to \infty}\int_t^T\int_{\T^m} \left\langle \frac{1}{\e_k}g\left(Y_s^{x,\e_k}\right),\psi(s,x)\right\rangle dxds\\
&=\lim_{k \to \infty}\int_t^T \int_{\T^m}\left\langle \frac{1}{\e_k}g\left(v_{\e_k}(T-s,B_s+x)\right),\psi(s,x)\right\rangle dxds\\
&=\lim_{k \to \infty}\int_0^T \int_{\T^m}\left\langle \frac{1}{\e_k}g\left(v_{\e_k}(s,x)\right),\psi(T-s,x-B_s)1_{[0,T-t]}(s)\right\rangle dxds\\
&=\sum_{i=1}^m\int_0^T \int_{\T^m}\left\langle  A(v(s,x))\left(\partial_{x_i} v(s,x),\partial_{x_i} v(s,x)\right),\psi(T-s,x-B_s)1_{[0,T-t]}(s)\right\rangle dxds\\
&=\sum_{i=1}^m\int_t^T\int_{\T^m} \left\langle A(Y_s^{x})\left(Z_s^{x,i},Z_s^{x,i}\right),\psi(s,x)\right\rangle dxds.
\end{align*}

Putting this with \eqref{t3-1-6},\eqref{t3-1-7}, \eqref{t3-1-12} into \eqref{t3-1-1} we can verify that
for every $t\in [0,T]$ and $\psi\in C^2(\T^m;\R^L)$ , \eqref{r2-1-1} holds for a.s. $\omega\in \Omega$. By Proposition
 \ref{r2-1} we have finished the proof.

\end{proof}

\vskip 5mm

\noindent \textbf{Acknowledgements.}
The research of Xin Chen and Wenjie Ye is supported by the National Natural Science Foundation of China (No.\ 11871338)

\end{document}